\def\titlerunning#1{\gdef\titrun{#1}}
\def\author#1{\gdef\autrun{\def\and{\unskip, }#1}\gdef\@author{#1}}
\def\address#1{{\def\and{\\\hspace*{18pt}}\renewcommand{\thefootnote}{}%
\footnote {#1}}%
\markboth{\autrun}{\titrun}}
\def\email#1{e-mail: #1}
\def\subjclass#1{{\renewcommand{\thefootnote}{}%
\footnote{\emph{Mathematics Subject Classification (2010):} #1}}}
\def\keywords#1{\par\medskip
\noindent\textbf{Keywords.} #1}
\newtheorem{thm}{Theorem}[section]
\newtheorem{cor}[thm]{Corollary}
\newtheorem{lem}[thm]{Lemma}
\newtheorem{prop}[thm]{Proposition} 
\theoremstyle{definition}
\numberwithin{equation}{section}
\begin{document}



\titlerunning{Spectral Decimation for Families of Laplacians on the Sierpinski Gasket}

\title{Spectral Decimation for Families of Self-Similar Symmetric Laplacians on the Sierpinski Gasket}

\author{Sizhen Fang, Dylan A. King, Eun Bi Lee, Robert S. Strichartz}

\date{}

\maketitle

\address{S. Fang: Department of Mathematics and Statistics, Mount Holyoke College, Clapp Laboratory, South Hadley, MA, 01075; \email{\href{mailto:fang22s@mtholyoke.edu}{fang22s@mtholyoke.edu}}\and
D. A. King: Department of Mathematics and Statistics, Wake Forest University, Manchester Hall, Winston-Salem, NC, 27109; \email{\href{mailto:kingda16@wfu.edu}{kingda16@wfu.edu}}\and
E. Lee: Department of Mathematics, University of Chicago, Eckhart Hall, Chicago, IL, 60637; \email{\href{mailto:seraphina@math.uchicago.edu}{seraphina@math.uchicago.edu}}\and
R. S. Strichartz: Department of Mathematics, Cornell University, Malott Hall, Ithaca, NY, 14853; \email{\href{mailto:str@math.cornell.edu}{str@math.cornell.edu}}}

\subjclass{31C45, 42C99}

\begin{abstract}
We construct a one-parameter family of Laplacians on the Sierpinski Gasket that are symmetric and self-similar for the 9-map iterated function system obtained by iterating the standard 3-map iterated function system. Our main result is the fact that all these Laplacians satisfy a version of spectral decimation that builds a precise catalog of eigenvalues and eigenfunctions for any choice of the parameter. We give a number of applications of this spectral decimation. We also prove analogous results for fractal Laplacians on the unit Interval, and this yields an analogue of the classical Sturm-Liouville theory for the eigenfunctions of these one-dimensional Laplacians.

\keywords{Sierpinski gasket, Laplacians, spectral decimation, Sturm-Liouville theory, threshold subdivision, hierarchical Laplacians, heat equation, wave equation}

\end{abstract}

\section{Introduction}
In the theory of analysis on fractals, the standard Laplacian on the Sierpinski gasket first presented by Kigami \cite{Kigami1} stands as a kind of ``poster child", as it is nontrivial but completely understandable (See \cite{Strichartz} for an elementary exposition). Just as there are families of Laplacians associated to manifolds (usually described in terms of Riemannian metrics), so there are families of Laplacians on the Sierpinski gasket (SG). Note that SG is a self-similar fractal, characterized by the self-similar identity
\begin{equation}\label{sgdef}
SG = \bigcup_{j=0}^2 F_j(SG)
\end{equation}
where $F_j$ are the contractions of the plane
\begin{equation}
F_j(x) = \frac{1}{2}(x-q_j) + \frac{1}{2}q_j
\end{equation}
with $(q_0, q_1, q_2)$ the vertices of an equilateral triangle. The standard Laplacian $\Delta$ is self-similar, meaning that \begin{equation}\label{renormdef}
\Delta(u\circ F_j) = r_j^{-1}(\Delta u)\circ F_j
\end{equation}
for some positive coefficients $r_j$ (in this case, all $r_j = 5$), and symmetric with respect to the dihedral symmetry group $D_3$ of the triangle (and hence SG). Moreover, the standard Laplacian is characterized, up to a constant, by these two properties. Is this the end of the story?

Actually not. Already in \cite{Cucuringu} it was noted that you can modify the iterated function system (IFS) $\{F_j\}$ to another one $\{\widetilde{F}_j\}$ that composes each $F_j$ with the reflection preserving $q_j$, and still generate SG by the analog of (\ref{sgdef}). This allows the construction of another family of Laplacians in a very explicit fashion, but still the standard Laplacian is the unique one that is both self-similar and symmetric.

Another way to generate SG is to take the IFS consisting of all nine compositions $F_j \circ F_k$. The sets $F_jF_k(SG)$ gives a level two subdivision with respect to the original IFS that becomes the level one subdivision for the composite IFS. The symmetry condition says that the three outer cells $F_jF_j(SG)$ are equivalent, as are the six inner cells $F_jF_k(SG)$ for $j\neq k$. We then can construct a two-parameter family of Laplacians that are both self-similar and symmetric. If we add one simplifying condition that makes the renormalization coefficients in the analog of (\ref{renormdef}) equal for all nine cells, then we end up with a one-parameter family of symmetric, self-similar Laplacians. It is this family of Laplacians that we examine in detail in this paper. The standard Laplacian belongs to this familiy for the parameter choice $r=1$.

A remarkable property of the standard Laplacian on SG, called \textit{spectral decimation}, was discovered by Fukushima and Shima \cite{Fukushima}. It is natural to consider SG as the limit of a sequence of graphs, and the standard Laplacian as a limit of graph Laplacians. In particular, there is a straightforward algorithm to construct harmonic functions on the graph approximations. Spectral decimation allows you to explicitly construct eigenfunctions and eigenvalues on SG as limits of eigenfunctions and eigenvalues on the graph approximations. A key result in this paper is the discovery of an analog of spectral decimation for the whole family of Laplacians we consider. This is quite surprising, since there are many fractal Laplacians that are extremely symmetric but do not satisfy spectral decimation (see \cite{Pentagasket} for the case of the pentagasket). Using spectral decimation, we are able to answer many interesting questions about the spectra of our family of Laplacians.

The construction of the standard Laplacian on SG is an exact analog of the construction of the second derivative on the unit Interval as a limit of second difference quotients. In a similar way, there are analogs to our twice-iterated gasket construction on the unit Interval, based on the self-similar identity
\begin{equation}
I=\bigcup_{i=0}^3 F_i(I)
\end{equation}
for
\begin{equation}
F_i(x) = \frac{1}{4}x + \frac{i}{4}
\end{equation}
and a system of weights that treats inner and outer maps separately. In fact, we present our results for these one-dimensional fractal Laplacians first since the description is simpler and we can say more in this context. In particular, the eigenfunctions for these Laplacians satisfy analogs of Sturm-Liouville theory concerning locations of zeros and local extrema. It would be fair to think of them as forming a family of special functions analogous to $\{\sin (k\pi x)\}$.

Sections 2, 3, and 4 of this paper are devoted to the family of Laplacians on the Interval, with the description of the Laplacians in section 2, the theory of spectral decimation in section 3, and numerical data in section 4. This data is selected from the website \cite{Website}, which also contains the programs used to generate the data. In section 5 we prove the Sturm-Liouville properties of our one-dimensional eigenfunctions. Sections 6, 7, and 8 present the analogs of sections 2, 3, and 4 for the SG Laplacians. In section 9 we discuss a method we call \textit{threshold subdivision} to create different Laplacians using the same parameters but subdividing (or not) cells at one level to create cells of the next level based on the measure of the cell. We present experimental evidence that the Laplacians obtained are different. In section 10 we study hierarchical Laplacians which are not self-similar but vary the parameters at different levels of the construction, as in \cite{Drenning}. In section 11 we present data for solutions of spacetime equations, such as the heat equation and the wave equation. See also \cite{ACBMT} for related results.

Although we present a large amount of numerical data, most of our important results are given complete proofs. To some extent this disguises the experimental nature of our work, since the numerical data led us to conjecture the results that we were then able to prove. The reader should consult \cite{Kigami2} or \cite{Strichartz} for the standard theory of the Laplacian on SG.

\section{Laplacians on the Interval}

To construct the Interval model, we start with a unit Interval $I = [0,1]$ and the IFS $\{F_{i}|F_i(x) = \frac{x}{4} + \frac{i}{4}, i=0,1,2,3\}$. This gives us the following self-similar identity on the Interval  
\begin{equation}
I=\bigcup_{i=0}^{3} F_{i}(I).
\end{equation}
In other words, we divide the Interval into subintervals of length $\left(\frac{1}{4}\right)^{m}$ on each level, so that at level $m$ the subintervals are $I_{k}^{(m)}=[\frac{k}{4^{m}},\frac{k+1}{4^{m}}]$, where $0 \leqslant k \leqslant 4^{m}-1$ (see Figure \ref{fig:adjmcells}).

After constructing the model we can assign measure and resistance to the Interval. On level 1, we assign a measure $\frac{p}{2}$ (where $0 < p < 1$) to the outer intervals $I_{0}^{(1)}$ and $I_{3}^{(1)}$ and a measure $\frac{1-p}{2}$ to the inner intervals $I_{1}^{(1)}$ and $I_{2}^{(1)}$ (as in Figure \ref{fig:intlev1}). Every time we take a cell from level $m$ and subdivide it into 4 cells on level $m+1$, we split the measure of the $m$-cell in the same proportions
\begin{equation}
\mu(I_{4k+j}^{(m+1)})= \begin{cases} \frac{p}{2}\mu(I_{k}^{(m)})  &j=0,3, \\ \frac{1-p}{2}\mu(I_{k}^{(m)})  &j=1,2. \end{cases}
\end{equation}

Let the function $i$ be defined as the following. If $A = F_{i_1}\circ ... \circ F_{i_m}(I)$ is a cell on level $m$,
\begin{equation}
i(A) = \#\{i_j | i_j = 0, 3\}
\end{equation}
i.e. $i(A)$ is the number of ``outside" choices during contraction to cell $A = I_k^{(m)}$. Then
\begin{equation}
\mu(A) = \left(\frac{p}{2}\right)^{i(A)}\left(\frac{1-p}{2}\right)^{m-i(A)} = \frac{p^{i(A)}(1-p)^{m-i(A)}}{2^{m}}.
\end{equation}

Now we assign a point mass $\mu_{k}^{(m)}$ to each point $x=\frac{k}{4^{m}}$. By averaging the measures of the intervals that meet at $x$ we get 
\begin{equation}
\mu_{k}^{(m)}=\frac{\mu(I_{k}^{(m)})+\mu(I_{k-1}^{(m)})}{2}.
\end{equation}
We will approximate $\int f d\mu$ by $\sum_{k=1}^{4^{m}-1} f(\frac{k}{4^{m}})\mu_{k}^{(m)}$.

Next choose another parameter $q$ with  $0 < q < 1$ and define resistance $R(I_{k}^{(m)})$ in the same manner as the measure
\begin{equation}
R(I_{k}^{(m)})=\frac{q^{i(k)}(1-q)^{m-i(k)}}{2^{m}}.
\end{equation} 

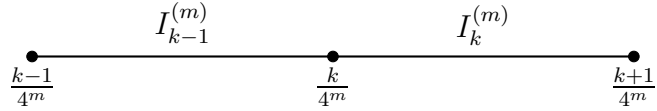
\begin{figure}
    \centering
    \begin{tikzpicture}
    \draw[black, thick] (0,0) -- (8,0);
    \filldraw[black] (0,0) circle (2pt) node[anchor=north] {$\frac{k-1}{4^{m}}$};
    \filldraw[black] (2,0) circle (0pt) node[anchor=south] {$I_{k-1}^{(m)}$};
    \filldraw[black] (4,0) circle (2pt) node[anchor=north] {$\frac{k}{4^{m}}$};
    \filldraw[black] (6,0) circle (0pt) node[anchor=south] {$I_{k}^{(m)}$};
    \filldraw[black] (8,0) circle (2pt) node[anchor=north] {$\frac{k+1}{4^{m}}$};
    \end{tikzpicture}
    \caption{Two adjacent $m$-cells}
    \label{fig:adjmcells}
\end{figure}

\begin{figure}
    \centering
    \begin{tikzpicture}
    \draw[black, thick] (0,0) -- (8,0);
    \filldraw[black] (0,0) circle (2pt) node[anchor=west]{};
    \filldraw[black] (1,0) circle (0pt) node[anchor=south]{$\frac{p}{2}$};
    \filldraw[black] (1,0) circle (0pt) node[anchor=north]{$\frac{q}{2}$};
    \filldraw[black] (2,0) circle (2pt) node[anchor=west]{};
    \filldraw[black] (3,0) circle (0pt) node[anchor=south]{$\frac{1-p}{2}$};
    \filldraw[black] (3,0) circle (0pt) node[anchor=north]{$\frac{1-q}{2}$};
    \filldraw[black] (4,0) circle (2pt) node[anchor=north] {};
    \filldraw[black] (5,0) circle (0pt) node[anchor=south]{$\frac{1-p}{2}$};
    \filldraw[black] (5,0) circle (0pt) node[anchor=north]{$\frac{1-q}{2}$};
    \filldraw[black] (6,0) circle (2pt) node[anchor=north]{}; 
    \filldraw[black] (7,0) circle (0pt) node[anchor=south]{$\frac{p}{2}$};
    \filldraw[black] (7,0) circle (0pt) node[anchor=north]{$\frac{q}{2}$};
    \filldraw[black] (8,0) circle (2pt) node[anchor=west] {};
    \end{tikzpicture}
    \caption{Level 1}
    \label{fig:intlev1}
\end{figure}

The conductances are the reciprocals of resistance, namely
\begin{equation}
c(I_{k}^{(m)})=\frac{1}{R(I_{k}^{(m)})}.
\end{equation}

The energy is defined
\begin{align}
\mathcal{E}_{m}(f)&=\sum_{k=0}^{4^{m}-1} c(I_{k}^{(m)})\left(f\left(\frac{k}{4^{m}}\right)-f\left(\frac{k+1}{4^m}\right)\right)^{2}, \\
\mathcal{E}(f) &= \lim_{m\to\infty} \mathcal{E}_m.
\end{align}

With this definition of energy, we have the weak formulation of the Laplacian:
\begin{equation}
\mathcal{E}(u,v) = -\int fv d\mu
\end{equation}
where $u, v \in \text{dom}\mathcal{E}$ and $f = \Delta^{(p)}u$, the Laplacian with parameter $p$. In addition, the pointwise Laplacian is given by 
\begin{multline}
-\Delta_{m}^{(p)}f\left(\frac{k}{4^{m}}\right)=\frac{1}{\mu_{k}^{m}}
\biggr[
c\left(I_{k}^{(m)}\right)
\left(f\left(\frac{k}{4^{m}}\right)-f\left(\frac{k+1}{4^{m}}\right)\right)\\
+c\left(I_{k-1}^{(m)}\right)
\left(f\left(\frac{k}{4^{m}}\right)-f\left(\frac{k-1}{4^{m}}\right)\right)
\biggr].
\end{multline}
This is a weighted average of the changes in $f$ over the two intervals intersecting at $\frac{k}{4^{m}}$. Let $A_0 = [\frac{k-1}{4^m}, \frac{k}{4^m}]$ and $A_1 = [\frac{k}{4^m}, \frac{k+1}{4^m}]$, the $m$-cells containing $\frac{k}{4^m}$. Then,
\begin{multline}\label{intlap}
-\Delta_{m}^{(p)}f\left(\frac{k}{4^{m}}\right)=\frac{2 \cdot 4^{m}}{p^{i(A_1)}(1-p)^{m-i(A_1)}+p^{i(A_0)}(1-p)^{m-i(A_0)}} \cdot \\ \biggr[\frac{1}{q^{i(A_1)}(1-q)^{m-i(A_1)}}\left(f\left(\frac{k}{4^{m}}\right)-f\left(\frac{k+1}{4^{m}}\right)\right)\\+\frac{1}{q^{i(A_0)}(1-q)^{m-i(A_0)}}\left(f\left(\frac{k}{4^{m}}\right)-f\left(\frac{k-1}{4^{m}}\right)\right)\biggr].
\end{multline}

Note that the Laplacian is renormalized by $\frac{p}{2} \cdot \frac{q}{2}$ or $\frac{1-p}{2} \cdot \frac{1-q}{2}$ depending on the location within the Interval. In order for the renormalization factor to be constant across the Interval, we need $\frac{p}{2} \cdot \frac{q}{2}=\frac{1-p}{2} \cdot \frac{1-q}{2}$, or $p+q=1$. We will be using this property throughout this paper for the Interval model.

We will impose Dirichlet boundary conditions, namely $f(0)=0$ and $f(\frac{4^{m}}{4^{m}})=0$. Then $-\Delta_{m}^{(p)}$ can be represented as a self adjoint (with respect to the pointmasses) matrix of size $(4^{m}-1) \times (4^{m}-1)$, which has $4^{m}-1$ eigenvectors with positive eigenvalues. In other words, 
\begin{equation}
-\Delta_{m}^{(p)}f\left(\frac{k}{4^{m}}\right)=\lambda f\left(\frac{k}{4^{m}}\right).
\end{equation}

We will abbreviate $\Delta_{m}^{(p)}$ to $\Delta_m$ throughout the rest of the text when the choice of parameter $p$ is clear.

We wish to study the continuous eigenfunctions and eigenvalues of the Laplacian as the limit of the discrete eigenfunctions and eigenvalues given by the graph approximations of the unit Interval.

\section{Spectral Decimation on the Interval} \label{intdec}

Our aim is to replicate the spectral decimation on the standard Interval model for our twice-iterated Interval model. Here we start by simplifying the pointwise Laplacian formula (\ref{intlap}). Letting $y_1 < z < y_2 \in V_m$, as diagrammed in Figure \ref{fig:intmodel} and $p+q=1$, there are three cases for pointwise Laplacian, 
\begin{align}\label{simpintlap}
-\Delta_m f(z) &= \left(\frac{4}{pq}\right)^m (2f(z) - f(y_1) - f(y_2)) &&\text{if }i([z, y_2]) = i([y_1, z]), \\
-\Delta_m f(z) &= \left(\frac{4}{pq}\right)^m (2f(z) - 2qf(y_1) - 2pf(y_2)) &&\text{if } i([z, y_2]) = i([y_1, z])+1, \\
-\Delta_m f(z) &= \left(\frac{4}{pq}\right)^m (2f(z) - 2pf(y_1) - 2qf(y_2)) &&\text{if } i([z, y_2]) = i([y_1, z])-1.
\end{align}

We know that the cases are exhaustive by the following lemma:
\begin{lem} \label{i(k)lemma}
Let $x\in V_{m}$. Then $|i(A_{0})-i(A_{1})|\leq1$ for $A_{0},A_{1}$ the two cells with junction point $x$.
\end{lem}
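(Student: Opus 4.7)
The plan is to argue by induction on the level $m$. The base case $m=1$ requires checking the three interior junction points $x \in \{1/4, 2/4, 3/4\}$: the pairs of adjacent level-1 cells give $(i(A_0), i(A_1)) = (1,0), (0,0), (0,1)$ respectively (outer-inner, inner-inner, inner-outer), so the bound holds.

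For the inductive step, let $x \in V_m$ be interior to $I$ with adjacent left cell $A_0$ and right cell $A_1$. Write $A_0 = F_{j_1}(A_0')$ and $A_1 = F_{i_1}(A_1')$ where $A_0', A_1'$ are $(m-1)$-cells, and split into two cases. If $j_1 = i_1$, then $A_0, A_1 \subset F_{i_1}(I)$ and the pull-backs $A_0', A_1'$ are adjacent $(m-1)$-cells meeting at $F_{i_1}^{-1}(x) \in V_{m-1}$. The inductive hypothesis gives $|i(A_0') - i(A_1')| \leq 1$, and since $i(A_\ell) - i(A_\ell')$ is the same value (either $1$ or $0$ according to whether $i_1 \in \{0,3\}$) for both $\ell = 0, 1$, the bound transfers to $A_0, A_1$.

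If $j_1 \neq i_1$, the point $x$ must be a shared boundary point of two adjacent level-1 cells, which forces $j_1 = i_1 - 1$ with $(j_1, i_1) \in \{(0,1), (1,2), (2,3)\}$. Moreover $A_0$ is forced to be the rightmost $m$-subcell of $F_{j_1}(I)$, with address $(j_1, 3, 3, \ldots, 3)$, and $A_1$ the leftmost $m$-subcell of $F_{i_1}(I)$, with address $(i_1, 0, 0, \ldots, 0)$. Hence $i(A_0) = (m-1) + \epsilon_{j_1}$ and $i(A_1) = (m-1) + \epsilon_{i_1}$, where $\epsilon_j = 1$ if $j \in \{0, 3\}$ and $0$ otherwise; direct inspection of the three allowed pairs gives differences of $1$, $0$, $1$ respectively, as required.

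The main (minor) obstacle is the second case: one has to see that when the two $m$-cells lie in distinct level-1 cells, those cells are automatically adjacent and $A_0, A_1$ are extremal subcells whose remaining $m-1$ address entries are uniformly $3$'s (for $A_0$) or $0$'s (for $A_1$) — all of which lie in $\{0,3\}$ and so contribute fully to $i(\cdot)$. Once this extremality observation is made, the rest is just a pull-back under $F_{i_1}$ in Case A and a three-pair tally in Case B.
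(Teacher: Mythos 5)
Your proof is correct, but it runs the induction in the opposite direction from the paper's, so the two arguments use genuinely different decompositions. The paper classifies $x\in V_{n+1}$ by the subdivision process, i.e.\ by the \emph{last} letter of the cell addresses: either $x\in V_n$ is an old vertex, in which case the two flanking $(n+1)$-cells are the rightmost subcell of the left parent and the leftmost subcell of the right parent, so both addresses end in an outer letter ($3$ and $0$ respectively), both $i$-values increase by exactly $1$, and the inductive bound propagates; or $x$ is a new vertex interior to a single subdivided $n$-cell, in which case the two subcells share a parent and their $i$-values differ by at most $1$ outright. You instead peel off the \emph{first} letter: when the two $m$-cells lie in the same level-$1$ cell you pull back by $F_{i_1}$ and invoke the hypothesis one level down (note, as you implicitly use, that $F_{i_1}^{-1}(x)$ is then an interior point of $I$, so it really is a junction point of two $(m-1)$-cells), and when they straddle a level-$1$ boundary you identify them as the extremal subcells with address tails of all $3$'s and all $0$'s and compute $i$ exactly. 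Both arguments ultimately rest on the same structural fact --- the two letters flanking any junction are both outer --- but they deploy it at opposite ends of the address. Your Case B actually proves slightly more (the exact values $i(A_\ell)=(m-1)+\epsilon$ at points of $V_1$), at the cost of needing the extremality observation you flag; the paper's bottom-up version avoids that observation and, as the paper notes in Section 7, transfers verbatim to SG, though your top-down pull-back would adapt there as well since the corner cells of a level-$1$ cell of SG likewise have all-outer tails.
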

\begin{proof}
Proof by induction:

Base case: $V_{1}$ satisfies this property.

Inductive Step: Given that the claim holds on level $V_{n}$, the extension to level $V_{n+1}$ involves subdividing each $n$-cell via the process defined in section 2. Letting $x\in V_{n+1}$, there are two cases.
\begin{itemize}
    \item[(a)] $x\in V_{n}$. Let $A_{0},A_{1}$ be the two $n$-cells with $x$ as their junction point. By inductive hypothesis, $|i(A_{0})-i(A_{1})|\leq1$. By the design of the subdivision process, the two ($n+1$)-cells with $x$ as junction point in $V_{n+1}$ are $F_{j}A_{0}$ and $F_{k}A_{1}$ with $j\neq k$. Then $i(F_{j}A_{0}) = i(A_{0})+1$ and $i(F_{k}A_{1}) = i(A_{1})+1$, so $|i(F_{j}A_{0})-i(F_{k}A_{1})|\leq1$ and the claim holds on $V_{n+1}$.
    
    \item[(b)] $x \notin V_{n}$. Therefore $x$ must be in a subdivided $n$-cell, $A$. The two $(n+1)$-cells with junction point $x$ are $F_{j}A$ and $F_{k}A$ by the subdivision scheme. Since $i$ is additive over words, ie $i(F_{j}A) = i(F_{j}I)+i(A)$, computing $i$ on these cells yields $i(F_{j}I)+i(A)$ and $i(F_{k}I)+i(A)$. The value of $i(F_{j}I)$ must either be 0 ($i=0,3$) or 1 ($i=1,2$), and similarly for $i(F_{k}I)$. Then $|(i(F_{j}I)+i(A))-(i(F_{k}I)+i(A))| = |i(F_{j}I)-i(F_{k}I)| \leq 1$ and so the claim holds on $V_{n+1}$.
\end{itemize}
\end{proof}

For any given eigenvalues $\lambda_{m}$ and eigenfunctions $f_{m}(x)$ of the Laplacian on level $m$, we want to be able to extend the eigenfunctions to level $m+1$, as well as give a new eigenvalue $\lambda_{m+1}$ such that the following equation holds

\begin{equation}
-\Delta_{m+1} f_{m+1}(x) = \lambda_{m+1}f_{m+1}(x)\quad \forall \ x\in V_{m+1}\setminus V_0. \label{eqn:eigint}
\end{equation}

For now, we will omit the renormalization factor $\left(\frac{4}{pq}\right)^m$, but we will rescale our eigenvalues later.  

As shown in Figure \ref{fig:intmodel}, $x_{1}$, $x_{2}$ are our points on the previous level, and $y_{1}$, $y_{2}$ and $z$ are new points that are born on the next level. Note that we will drop the $f$ in front of the variables from here for the sake of simplicity. Evaluating \ref{eqn:eigint} on the three new points gives us three equations and six variables, meaning we can solve for the values at $y_{1}$, $y_{2}$ and $z$ in terms of $x_{1}$, $x_{2}, \lambda_{m+1}$ and $p$ (where $q = 1-p$ as usual). Solutions for these values are as follows. 

\begin{align}
y_1(x_1, x_2, \lambda_{m+1}, p) &= \frac{-4pqx_2-2px_1(2+2p-4\lambda_{m+1}+\lambda_{m+1}^2)}{(-4q+(\lambda_{m+1}-2)^2)(\lambda_{m+1}-2)} \nonumber\\
z(x_1, x_2, \lambda_{m+1}, p) &= \frac{2p(x_1+x_2)}{(\lambda_{m+1}-2)^2-4q} \nonumber\\
y_2(x_1, x_2, \lambda_{m+1}, p) &= \frac{-4pqx_1 - 2px_2(2+2p-4\lambda_{m+1}+\lambda_{m+1}^2)}{(-4q+(\lambda_{m+1}-2)^2)(\lambda_{m+1}-2)} \label{eqn:intdecfunc}
\end{align}

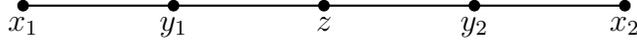
\begin{figure}
    \centering
    \begin{tikzpicture}
    \draw[black, thick] (0,0) -- (8,0);
    \filldraw[black] (0,0) circle (2pt) node[anchor=north] {$x_{1}$};
    \filldraw[black] (2,0) circle (2pt) node[anchor=north] {$y_{1}$};
    \filldraw[black] (4,0) circle (2pt) node[anchor=north] {$z$};
    \filldraw[black] (6,0) circle (2pt) node[anchor=north] {$y_{2}$};
    \filldraw[black] (8,0) circle (2pt) node[anchor=north] {$x_{2}$};
    \end{tikzpicture}
    \caption{A single $m$-cell}
    \label{fig:intmodel}
\end{figure}

In order for the equations above to hold we need the denominators to be nonzero, and thus $\lambda = 2(1-\sqrt{q}), 2(1+\sqrt{q}), 2$ are our \textit{forbidden eigenvalues}.

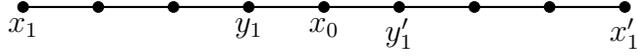
\begin{figure}
    \centering
    \begin{tikzpicture}
    \draw[black, thick] (0,0) -- (8,0);
    \filldraw[black] (0,0) circle (2pt) node[anchor=north] {$x_1$};
    \filldraw[black] (1,0) circle (2pt) node[anchor=north] {};
    \filldraw[black] (2,0) circle (2pt) node[anchor=north] {};
    \filldraw[black] (3,0) circle (2pt) node[anchor=north] {$y_1$};
    \filldraw[black] (4,0) circle (2pt) node[anchor=north] {$x_0$};
    \filldraw[black] (5,0) circle (2pt) node[anchor=north] {$y_1'$};
    \filldraw[black] (6,0) circle (2pt) node[anchor=north] {};
    \filldraw[black] (7,0) circle (2pt) node[anchor=north] {};
    \filldraw[black] (8,0) circle (2pt) node[anchor=north] {$x_1'$};
    \end{tikzpicture}
    \caption{The $m$-cells around $x_0$}
    \label{fig:intextmodel}
\end{figure}

We must verify that this extension is still an eigenfunction at $x_0 \in V_m$ with the new eigenvalue $\lambda_{m+1}$. We must consider the two neighboring $m$-cells around $x_0$ as in Figure \ref{fig:intextmodel}. Although there are three different cases depending on the two $m$-cells around $x_0$, the algebraic result is the same for all three cases. In the simplest case, we are given

\begin{equation}
\lambda_m x_0 = \left(\frac{4}{pq}\right)^m(2x_0 - x_1-x_1')
\end{equation}

by the $m$-level eigenfunction equation, and want to verify

\begin{equation}
\lambda_{m+1} x_0 = \left(\frac{4}{pq}\right)^{m+1}(2x_0 - y_2(x_1, x_0, \lambda_{m+1}, p)-y_1(x_0, x_1', \lambda_{m+1}, p))
\end{equation}

These two conditions yield $\lambda_{m}$ as a quartic function of $\lambda_{m+1}$ and $p$.

\begin{equation}
\lambda_{m}(\lambda_{m+1}, p) = \frac{(4-\lambda_{m+1})(\lambda_{m+1}-2)^2\lambda_{m+1}}{4pq}
\end{equation}

All eigenvalues need to be scaled by a factor of $\left(\frac{4}{pq}\right)^{m+1}$. Since $p$ and $q$ are interchangeable in the quartic equation and the scaling factor, we will see a pattern of equal eigenvalues when we interchange $p$ and $q$ in section 4. 

Now we want to determine the eigenfunctions and eigenvalues that are born on each level. We can describe all the eigenfunctions that are born with the following proposition and corollary. 

\begin{prop}\label{squishprop}
If $f_{m}$ is a Dirichlet eigenfunction with eigenvalue $\left(\frac{4}{pq}\right)^{m}\lambda$ on level $m$, then $f_{m+1}$ defined the following way is an eigenfunction with eigenvalue $\left(\frac{4}{pq}\right)^{m+1}\lambda$ on level $m+1$: \\
If $f_{m}$ is skew symmetric about $x = \frac{1}{2}$,
\begin{equation*}
f_{m+1}(x)=
\begin{cases}
f_{m}\circ F_0^{-1}(x) &\text{if }x\in F_0(I)\\
\frac{p}{q}f_{m}\circ F_1^{-1}(x)&\text{if }x\in F_1(I)\\
\frac{p}{q}f_{m}\circ F_2^{-1}(x)&\text{if }x\in F_2(I)\\
f_{m}\circ F_3^{-1}(x) &\text{if }x\in F_3(I)
\end{cases}
\end{equation*}
If $f_{m}$ is symmetric about $x = \frac{1}{2}$,
\begin{equation*}
f_{m+1}(x)=
\begin{cases}
f_{m}\circ F_0^{-1}(x) &\text{if }x\in F_0(I)\\
-\frac{p}{q}f_{m}\circ F_1^{-1}(x)&\text{if }x\in F_1(I)\\
\frac{p}{q}f_{m}\circ F_2^{-1}(x)&\text{if }x\in F_2(I)\\
-f_{m}\circ F_3^{-1}(x) &\text{if }x\in F_3(I)
\end{cases}
\end{equation*}
\end{prop}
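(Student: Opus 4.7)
The plan is to verify the eigenvalue equation \eqref{eqn:eigint} for $f_{m+1}$ directly at every point of $V_{m+1} \setminus V_0$, separating two kinds of points: those interior to a single sub-cell $F_i(I)$, and the three junction points $1/4, 1/2, 3/4$ where adjacent sub-cells meet. The essential ingredients are the simplified pointwise formula \eqref{simpintlap}, the Dirichlet condition $f_m(0) = f_m(1) = 0$, and the (skew)symmetry of $f_m$ about $x = 1/2$.

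First I would check that $f_{m+1}$ is well-defined, i.e.\ that the two branches of the piecewise definition agree at the three interior junction points, and that it satisfies the Dirichlet condition at $x = 0, 1$. At each of $0, 1/4, 1/2, 3/4, 1$ the value $f_{m+1}$ is a constant (one of $\pm 1, \pm p/q$) times $f_m(0)$ or $f_m(1)$, hence vanishes by the Dirichlet hypothesis on $f_m$. So $f_{m+1}$ is continuous, obeys the Dirichlet condition, and vanishes at each of the three interior junctions.

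For a point $x \in V_{m+1} \setminus V_0$ interior to some sub-cell $F_i(I)$, set $x' = F_i^{-1}(x) \in V_m \cap (0,1)$. The two $(m+1)$-cells meeting at $x$ are $F_i$-images of the two $m$-cells meeting at $x'$, so their $i$-values are shifted by a common constant and the case of \eqref{simpintlap} applicable at $x$ is the same as the one applicable at $x'$. The piecewise constant $c_i \in \{\pm 1, \pm p/q\}$ factors out of the right-hand side, and the level-$m$ eigenvalue equation at $x'$ translates directly into the level-$(m+1)$ eigenvalue equation at $x$, with the prefactor advancing from $(4/pq)^m$ to $(4/pq)^{m+1}$. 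Thus $-\Delta_{m+1}f_{m+1}(x) = (4/pq)^{m+1}\lambda f_{m+1}(x)$.

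The main obstacle is the junction points, where the two adjacent $(m+1)$-cells belong to different sub-cells and carry different constants $c_i$. For each $z \in \{1/4, 1/2, 3/4\}$ I would read off the $i$-values of the two adjacent $(m+1)$-cells (at $z = 1/4$ they are $m+1$ and $m$, so case 3 of \eqref{simpintlap} applies; at $z = 1/2$ both equal $m$, case 1; at $z = 3/4$ they are $m$ and $m+1$, case 2) and evaluate the right-hand side of \eqref{simpintlap}. Since $f_{m+1}(z) = 0$, this reduces to showing that a specific linear combination of $f_m(1/4^m)$ and $f_m(1 - 1/4^m)$ vanishes. In the skew-symmetric case the coefficients produce the sum $f_m(1/4^m) + f_m(1 - 1/4^m)$, which vanishes by $f_m(1-x) = -f_m(x)$; in the symmetric case the sign choices on $F_1(I)$ and $F_3(I)$ instead produce the difference $f_m(1/4^m) - f_m(1 - 1/4^m)$, which vanishes by $f_m(1-x) = f_m(x)$. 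The sign pattern in the statement of the proposition is precisely what is needed for this cancellation in both cases; the verification is routine but must be carried out separately at all three junctions.
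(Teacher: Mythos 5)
Your proposal is correct and follows essentially the same route as the paper's proof: the same decomposition into points interior to a sub-cell $F_i(I)$ (where the constant $c_i$ factors out and the level-$m$ eigenvalue equation transfers with one extra factor of $4/(pq)$) and the three junction points of $V_1$ (where the Dirichlet condition forces $f_{m+1}$ to vanish and the chosen signs make the neighboring contributions cancel via the (skew)symmetry of $f_m$). Your case bookkeeping at $1/4$, $1/2$, $3/4$ matches the paper's, which only writes out $1/4$ explicitly.
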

 
\begin{proof}
First, we note that the two cases given (symmetric or skew symmetric) are exhaustive, since we see that by the eigenfunction extension equations from decimation, symmetric (or skew symmetric) functions will be extended so that the extension is symmetric or skew symmetric, and all eigenfunctions born also satisfy that condition.

In both cases, $f_{m+1}$ is (locally) skew symmetric about $y \in V_1$ for all $m$. For all $y \in V_{m+1}$, we want to verify that $f_{m+1}(y) = \left(\frac{4}{pq}\right)^{m+1}\lambda_{m}$. There are two cases for $y\in V_{m+1}$: $y\not\in V_1$ or $y\in V_1$.

\begin{itemize}

\item[(a)] Consider $y\not\in V_1$ and let $f_m$ be an eigenfunction on level $m$ with eigenvalue $\left(\frac{4}{pq}\right)^{m}\lambda_m$. For all such $y$, there is some $i$ and $x\neq 0, 1$ such that $y = F_i(x)$. If $i=0, 3$,
\begin{align*}
\Delta_{m+1} f_{m+1}(F_i(x)) &= \frac{4}{pq}\Delta_{m+1}(f_{m+1}\circ F_i)(x) &\text{by the scaling law}\\
&=\frac{4}{pq}\Delta_{m}(\pm f_{m}(x)) &\text{by definition of }f_{m+1}\\
&= \left(\frac{4}{pq}\right)\left(\frac{4}{pq}\right)^{m}(-\lambda) (\pm f_{m}(x)) &\text{by assumption}\\
&= \left(\frac{4}{pq}\right)^{m+1}(-\lambda) f_{m+1}(F_i(x)) &\text{by definition of }f_{m+1}
\end{align*}
A similar computation holds for $i= 1, 2$.

\item[(b)] Consider $y \in V_1$ and let $f_m$ be an eigenfunction on level $m$ with eigenvalue $\left(\frac{4}{pq}\right)^{m}\lambda_m$ as before. Recall the pointwise Laplacian formula (\ref{simpintlap}) and consider $y = \frac{1}{4}$ and its neighbors, $y_1$ and $y_2$ (where $y_1 < y < y_2$).
\begin{align*}
-\Delta_{m+1} f_{m+1}\left(\frac{1}{4}\right) &= \left(\frac{4}{pq}\right)^{m+1} \left(2f_{m+1}\left(\frac{1}{4}\right) - 2pf_{m+1}(y_1) - 2qf_{m+1}(y_2)\right) \\
&= \left(\frac{4}{pq}\right)^{m+1} \left(2f_{m+1}\left(\frac{1}{4}\right) - 2pf_{m+1}(y_1) + 2q\left(\frac{p}{q}\right)f_{m+1}(y_1)\right) \\
&= \left(\frac{4}{pq}\right)^{m+1} 2f_{m+1}\left(\frac{1}{4}\right) \\
&= 0
\end{align*}
Since we know that $f_{m+1}(y_2) = -\frac{p}{q}f_{m+1}(y_1)$ by skew symmetry, and $f_{m+1}(\frac{1}{4}) = 0$ by the definition of $f_{m+1}$. Similarly, the eigenvalue equation holds at $x = \frac{3}{4}$.
At $x = \frac{1}{2}$, since by definition, $f_{m+1}$ is skew symmetric about $x = \frac{1}{2}$ and $f_{m+1}(\frac{1}{2}) = 0$, the equation holds as well. 
\end{itemize}
\end{proof}

\begin{cor}
Eigenvalues $2(1-\sqrt{q})$, $2$, $2(1+\sqrt{q})$, scaled appropriately, are born on each level $m$.\label{intborn}
\end{cor}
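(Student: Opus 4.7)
The plan is to handle this in two stages: first verify the base case $m=1$ by a direct $3 \times 3$ eigenvalue computation, then bootstrap via Proposition~\ref{squishprop} to propagate the same base eigenvalues through every subsequent level.

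For the base case, note that $V_1 \setminus V_0 = \{1/4,\, 1/2,\, 3/4\}$, so the Dirichlet problem for $-\Delta_1$ reduces to diagonalizing a $3 \times 3$ matrix. Writing (\ref{simpintlap}) at each of these three points and using $f(0) = f(1) = 0$ gives $-\Delta_1 = (4/pq)\,M$ where
\begin{equation*}
M = \begin{pmatrix} 2 & -2q & 0 \\ -1 & 2 & -1 \\ 0 & -2q & 2 \end{pmatrix}.
\end{equation*}
Expanding the determinant along the first row, the characteristic polynomial of $M$ factors as $(2-\lambda)\bigl((2-\lambda)^2 - 4q\bigr)$, whose roots are exactly $\lambda \in \{2(1-\sqrt q),\, 2,\, 2(1+\sqrt q)\}$. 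The associated eigenvectors work out to $(\sqrt q,\, 1,\, \sqrt q)$, $(1,\, 0,\, -1)$, and $(-\sqrt q,\, 1,\, -\sqrt q)$; the first and third are symmetric about $x = 1/2$, while the second is skew symmetric. Thus every level-$1$ Dirichlet eigenfunction satisfies one of the two symmetry hypotheses of Proposition~\ref{squishprop}.

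To finish, I would apply Proposition~\ref{squishprop} iteratively. A short check from the piecewise defining formulas shows that if $f_m$ is skew symmetric about $x = 1/2$ then the skew case produces a skew symmetric $f_{m+1}$; and if $f_m$ is symmetric then the symmetric case produces an $f_{m+1}$ that is itself skew symmetric, so all further iterations remain in the skew case. Hence, starting from any of the three level-$1$ eigenfunctions identified above with base eigenvalue $\lambda \in \{2(1-\sqrt q),\, 2,\, 2(1+\sqrt q)\}$, one applies the proposition $m-1$ times to obtain an eigenfunction of $-\Delta_m$ with eigenvalue $(4/pq)^m \lambda$, which is exactly the content of the corollary. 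These eigenfunctions are genuinely born at level $m$ rather than inherited via spectral decimation, because the extension formulas (\ref{eqn:intdecfunc}) have vanishing denominators at precisely these three values of $\lambda$, so no level-$(m-1)$ eigenfunction could decimate into them. The only delicate point in the argument is the symmetry bookkeeping through the iteration; otherwise the proof is a direct composition of a $3 \times 3$ matrix diagonalization with repeated application of Proposition~\ref{squishprop}.
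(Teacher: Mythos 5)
Your proof is correct and follows essentially the same route as the paper: exhibit the three level-$1$ Dirichlet eigenfunctions with eigenvalues $2(1\pm\sqrt q)$ and $2$ (you derive them by diagonalizing the $3\times 3$ matrix, the paper simply writes them down), then propagate to level $m$ by iterating Proposition~\ref{squishprop}. Your added checks --- the symmetry bookkeeping through the iteration and the observation that these three values are exactly the forbidden eigenvalues of the extension formulas, so they cannot arise by decimation --- only make explicit what the paper's proof leaves implicit.
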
 
\begin{proof}
To confirm that $2(1-\sqrt{q})$ and $2(1+\sqrt{q})$ never arise from as a solution to the quartic from decimation, we see that both $\lambda_m = 2(1-\sqrt{q})$ and $\lambda_m =2(1+\sqrt{q})$ give $\lambda_{m-1} = 4$, meaning that $\lambda_{m-2} = 0$. $\lambda_m=2$ gives $\lambda_{m-1} = 0$. Therefore $2(1-\sqrt{q})$ and $2(1+\sqrt{q})$ are forbidden eigenvalues (at least for Dirichlet eigenfunctions).

Proof by induction on $m$. Base case: We give three eigenfunctions for level $m=1$.
\begin{align*}
g_1(x) = \begin{cases}
0 & x=0 \\
\sqrt{q} & x=\frac{1}{4} \\
1 & x=\frac{1}{2}\\
\sqrt{q} & x=\frac{3}{4} \\
0 & x=1
\end{cases}
\quad
g_2(x) = \begin{cases}
0 & x=0 \\
1 & x=\frac{1}{4} \\
0 & x=\frac{1}{2} \\
-1 & x=\frac{3}{4} \\
0 & x=1
\end{cases}
\quad
g_3(x) = \begin{cases}
0 & x=0 \\
\sqrt{q} & x=\frac{1}{4} \\
-1 & x=\frac{1}{2}\\
\sqrt{q} & x=\frac{3}{4} \\
0 & x=1
\end{cases}
\end{align*}
By the proposition above, we can construct new eigenfunctions on any level $m$ for which the eigenvalues stay the same, i.e. $2(1-\sqrt{q}), 2, 2(1+\sqrt{q})$, scaled up, so these are the eigenvalues that are born at every level.
\end{proof}

As usual, we finish with a counting argument to ensure that all eigenvalues and eigenfunctions have been accounted for. On level $m+1$, we should have $4^{m+1}-1$ pairs of eigenvalues and eigenfunctions since $\#(V_{m+1} \setminus V_{0}) = 4^{m+1}-1$. From decimating the $4^{m}-1$ eigenfunctions from level $m$, we obtain $4(4^{m}-1) = 4^{m+1}-4$ new eigenfunctions. On each level, we account for $3$ new eigenvalues and functions that are born. Adding these give $4^{m+1}-4+3 = 4^{m+1}-1 = \#V_{m+1}$, meaning that this process accounts for all eigenfunctions and values on level $m+1$.

These are the four maps that take $\lambda_{m}$ to four values of $\lambda_{m+1}$.

\begin{align}
\Phi_1(\lambda_{m}) &= 2-\sqrt{2+2\sqrt{1-p\lambda_{m} +p^2\lambda_{m}}} \nonumber\\
\Phi_2(\lambda_{m}) &= 2-\sqrt{2-2\sqrt{1-p\lambda_{m} +p^2\lambda_{m}}} \nonumber\\
\Phi_3(\lambda_{m}) &= 2+\sqrt{2-2\sqrt{1-p\lambda_{m} +p^2\lambda_{m}}} \nonumber\\
\Phi_4(\lambda_{m}) &= 2+\sqrt{2+2\sqrt{1-p\lambda_{m} +p^2\lambda_{m}}} \label{eqn:intdecval}
\end{align}

In the following proposition, we study how the eigenvalues on level $m+1$ are ordered.

\begin{figure}
\centering
  \includegraphics[width=0.75\textwidth]{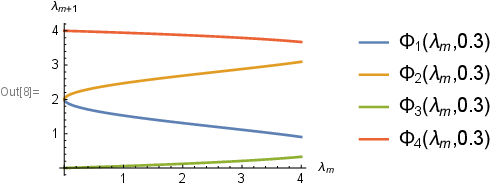}
  \caption{$\Phi_1(\lambda), \Phi_2(\lambda), \Phi_3(\lambda), \Phi_4(\lambda)$ for $p=0.3$}
  \label{fig:phigraph}
\end{figure}

\begin{prop}\label{phiorder}
Let $s$ be the number of Dirichlet eigenvalues on level $m$, given by $s=\#(V_{m} \setminus V_{0})=4^{m}-1$. Then the sequence of eigenvalues on level $m+1$ in strictly increasing order is the following:
\begin{multline}
\Phi_1(\lambda_{1, p}^{(m)}) , ..., \Phi_1(\lambda_{s, p}^{(m)}), 2(1-\sqrt{q}), \Phi_2(\lambda_{s, p}^{(m)}), ..., \Phi_2(\lambda_{1, p}^{(m)}), 2, \\
\Phi_3(\lambda_{1, p}^{(m)}), ..., \Phi_3(\lambda_{s, p}^{(m)}), 2(1+\sqrt{q}), \Phi_4(\lambda_{s, p}^{(m)}), ..., \Phi_4(\lambda_{1, p}^{(m)})
\end{multline}
Furthermore, all eigenvalues have multiplicity 1.
\end{prop}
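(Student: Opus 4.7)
The strategy is to combine the four inverse branches $\Phi_1,\ldots,\Phi_4$ from (\ref{eqn:intdecval}) with the three born eigenvalues from Corollary \ref{intborn}; the counting argument following that corollary already gives $4s+3=4^{m+1}-1=|V_{m+1}\setminus V_0|$, so once distinctness is established we have all eigenvalues. The ingredients needed are: monotonicity of each $\Phi_i$ (for the within-block ordering), placement of the born eigenvalues strictly between adjacent $\Phi_i$-blocks (for the cross-block ordering), and the inductive bound $\lambda_k^{(m)}\in(0,4)$ so that the $\Phi_i$-images make sense and stay in range.

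Monotonicity is immediate from differentiation of (\ref{eqn:intdecval}): $\Phi_1,\Phi_3$ are strictly increasing and $\Phi_2,\Phi_4$ are strictly decreasing on $(0,4)$. Assuming inductively that $\lambda_1^{(m)}<\cdots<\lambda_s^{(m)}$ all lie in $(0,4)$ (with base case $m=1$ given by $2(1\pm\sqrt{q}),2\in(0,4)$), the within-block orderings stated in the proposition follow. The crucial cross-block input is the value $\Phi_i(4)$: using $p+q=1$ we get $1-4pq=(p-q)^2$, so $\sqrt{1-4pq}=|p-q|$, and expanding the nested radicals yields $\{\Phi_i(4)\}=\{2(1\pm\sqrt{p}),\,2(1\pm\sqrt{q})\}$, with the specific branch-to-value assignment depending on whether $p<1/2$ or $p>1/2$. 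A short case analysis in either regime then produces strict inclusions
\[
\Phi_1((0,4))\subset(0,\,2(1-\sqrt{q})),\quad \Phi_2((0,4))\subset(2(1-\sqrt{q}),\,2),
\]
\[
\Phi_3((0,4))\subset(2,\,2(1+\sqrt{q})),\quad \Phi_4((0,4))\subset(2(1+\sqrt{q}),\,4),
\]
together with $\Phi_2<2<\Phi_3$ immediately from the closed forms. This places the born eigenvalues $2(1\pm\sqrt{q}),2$ exactly between the $\Phi_i$-blocks as in the proposition, propagates the inductive bound $\lambda_k^{(m+1)}<4$, and completes the ordering.

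Multiplicity one then follows from three disjointness facts: the ranges of distinct $\Phi_i$ on $(0,4)$ are disjoint open intervals; each $\Phi_i$ is injective, so by the inductive distinctness of the $\lambda_k^{(m)}$ the values $\Phi_i(\lambda_k^{(m)})$ are all distinct; and the born eigenvalues $2(1\pm\sqrt{q}),2$ lie in the gaps between these intervals, hence are distinct both from every $\Phi_i(\lambda_k^{(m)})$ and from each other. The main technical nuisance I anticipate is the case analysis $p\lessgtr 1/2$ in identifying $\Phi_i(4)$, since the branch-to-endpoint assignment flips between the two regimes; nevertheless, the decisive gap-bounding numbers $2(1\pm\sqrt{q})$ are the same in both cases, so the proposition is uniform in $p$.
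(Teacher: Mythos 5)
Your proposal is correct and follows essentially the same route as the paper: monotonicity of the $\Phi_i$ on $(0,4)$, evaluation of $\Phi_i(4)$ via $\sqrt{1-4pq}=|p-q|$ with the case split on $p\lessgtr\frac12$ to locate the born eigenvalues $2(1\pm\sqrt{q})$ and $2$ in the gaps between the four branch ranges, and injectivity plus range-disjointness for multiplicity one. Your write-up is somewhat more explicit than the paper's (which leaves the $2(1+\sqrt{q})$ case and the injectivity step to "similar computations"), but it is the same argument.
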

\begin{proof}
We see that the domain of these functions are $\lambda \in (0, 4)$ (since the maximum that can be attained from applying the functions on each level approaches $4$), and since $\Phi_1, \Phi_3$ are increasing and $\Phi_2, \Phi_4$ are decreasing, the minimum for $\Phi_1, \Phi_3$ and the maximum for $\Phi_2, \Phi_4$ occur at $\lambda = \frac{1}{p(1-p)}$. Also, $\Phi_4 \geq \Phi_3 \geq \Phi_2 \geq \Phi_1$, i.e. for all $x, y$ in the domain, $\Phi_4(x) \geq \Phi_3(y)$, etc. See Figure \ref{fig:phigraph} for a graph showing this in the case of $p=0.3$.

Inductively, since on level 1, the eigenvalues $2, 2(1-\sqrt{q}), 2(1+\sqrt{q})$ all have multiplicity 1, all eigenvalues generated by the above 4 equations on level $m$ will also have multiplicity 1, and we need only to check where the eigenvalues born on level $m$ fall into the spectrum. First, for $\lambda = 2$: we can see that since $\Phi_3(0) = \Phi_2(0) = 2$, and since $\lambda_{m-1} \neq 0$, we see that $2$ has multiplicity 1, and that $\Phi_3 > 2 > \Phi_2$. Similarly, consider $\Phi_2, \Phi_1,$ and $2(1-\sqrt{q})$. If $2p-1 > 0$, since $\Phi_2$ is decreasing, consider $\Phi_2(\max\Phi_i) = \Phi_3(4) = 2-\sqrt{2-2\sqrt{1-4p +4p^2}} = 2-\sqrt{2-2(2p-1)} = 2-\sqrt{4-4p} = 2(1-\sqrt{q})$, and since $\lambda_{m-1}=4 \implies \lambda_{m-2} = 0$, this means that $2(1-\sqrt{q}) < \min\Phi_2$. Otherwise, if $2p-1 < 0$, we see that $2(1-\sqrt{q}) > \max\Phi_1$ by computations similar to those before. Similar results can also be shown for $2(1+\sqrt{q})$. Finally,
\begin{align}
\Phi_1(\lambda) <  2(1-\sqrt{q}) < \Phi_2(\lambda) < 2 < \Phi_3(\lambda) < 2(1+\sqrt{q}) < \Phi_4(\lambda)
\end{align}
for all relevant $\lambda$. We know that $\Phi_1, \Phi_3$ are increasing and $\Phi_2, \Phi_4$ are decreasing, so if $\{\lambda_{n, p}^{(m)}\}$ denote an ordered sequence of eigenvalues on level $m$ for a particular choice of $p$, the sequence for level $m+1$ would look like this:
\begin{multline}
\Phi_1(\lambda_{1, p}^{(m)}) , ..., \Phi_1(\lambda_{s, p}^{(m)}), 2(1-\sqrt{q}), \Phi_2(\lambda_{s, p}^{(m)}), ..., \Phi_2(\lambda_{1, p}^{(m)}), 2, \\
\Phi_3(\lambda_{1, p}^{(m)}), ..., \Phi_3(\lambda_{s, p}^{(m)}), 2(1+\sqrt{q}), \Phi_4(\lambda_{s, p}^{(m)}), ..., \Phi_4(\lambda_{1, p}^{(m)})
\end{multline}
where $s$ is the number of eigenvalues on level $m$.
\end{proof}


We would like to define
\begin{equation}
\lambda = \lim_{m \to \infty} \left(\frac{4}{pq}\right)^{m}\lambda_{m}
\end{equation}

with $\lambda_{m}$ a sequence defined by repeated application of the $\Phi$ mappings, and all but a finite number $\Phi_{1}$. Expressing $\Phi_{1}$ in Taylor Series form yields

\begin{equation}
\Phi_{1}(x) = \frac{4}{pq}x+O(x^{2})
\end{equation}

and so as $\lambda \to 0$, the higher order terms will fall away, causing $\lambda_{m} = O(\big(\frac{pq}{4}\big)^{m})$ as $m \to \infty$. Then the limit defined above clearly exists.

With the above information, we may state the following theorem summarizing these results. The proof lies in the work shown above.

\begin{thm}[Interval Spectral Decimation]

For any $p$, given $u_{m}$, an eigenfunction with eigenvalue $\lambda_{m}$ on $V_{m}$, we may choose $\lambda_{m+1}$ as one of the values given in \ref{eqn:intdecval}, assuming that $\lambda_{m+1} \neq 2,2(1\pm\sqrt{q})$. We can then extend $u_{m}$ to $V_{m+1}$ according to \ref{eqn:intdecfunc} to obtain an eigenfunction on level $m+1$. Using Corollary \ref{intborn} and counting arguments, this process produces a complete spectra on level $m+1$.

\end{thm}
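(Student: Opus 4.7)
The plan is to treat this theorem as the bookkeeping conclusion of the constructions already developed, organizing them into a single coherent spectral picture. Fix an eigenfunction $u_m$ on $V_m$ with eigenvalue $\left(\frac{4}{pq}\right)^m \lambda_m$, and pick $\lambda_{m+1}$ to be any root of the quartic $\lambda_m = (4-\lambda_{m+1})(\lambda_{m+1}-2)^2 \lambda_{m+1}/(4pq)$ that avoids the forbidden values $2, 2(1\pm\sqrt{q})$, i.e. any one of $\Phi_i^{-1}(\lambda_m)$ for $i=1,2,3,4$. Inside each level-$m$ cell $[x_1,x_2]$, the three new vertices $y_1, z, y_2 \in V_{m+1}\setminus V_m$ are assigned values according to the extension formulas (\ref{eqn:intdecfunc}). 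Since these formulas were derived precisely by solving the three eigenfunction equations at $y_1, z, y_2$ simultaneously, the eigenvalue equation for $-\Delta_{m+1}$ holds automatically at each new point, and the forbidden-value hypothesis on $\lambda_{m+1}$ exactly ensures the denominators are nonzero so that the extension is well-defined.

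The next step, and the main technical content of the theorem, is to verify that the extended function still satisfies the eigenvalue equation at each surviving old vertex $x_0 \in V_m\setminus V_0$. By Lemma \ref{i(k)lemma}, there are three configurations for the pair of $m$-cells meeting at $x_0$ according to the balance of $i$-values, producing three superficially different pointwise Laplacian formulas via (\ref{simpintlap}). For each configuration I would substitute the values at $y_1$-type and $y_2$-type neighbors given by (\ref{eqn:intdecfunc}) into the $\Delta_{m+1}$ expression and simplify, using the $m$-level eigenvalue equation $-\Delta_m u_m(x_0) = \left(\frac{4}{pq}\right)^m \lambda_m\, u_m(x_0)$ to eliminate $u_m$ at neighboring old points. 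The payoff I expect is that all three configurations collapse to the same algebraic identity relating $\lambda_m$ and $\lambda_{m+1}$, namely the quartic above; this is the heart of why spectral decimation works and, in my estimation, the main obstacle in a fully rigorous write-up — the bookkeeping in the two asymmetric cases ($i$-values differing by $\pm 1$) is fiddly precisely because $p$ and $q$ get swapped between the two sides.

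With the extension mechanism verified, completeness follows from the counting argument already sketched. The four branches $\Phi_1,\dots,\Phi_4$ each applied to the $4^m-1$ Dirichlet eigenvalues on $V_m$ generate $4(4^m-1) = 4^{m+1}-4$ eigenpairs on $V_{m+1}$. Corollary \ref{intborn}, via Proposition \ref{squishprop}, produces exactly three additional eigenfunctions on $V_{m+1}$ with the born eigenvalues $2(1-\sqrt{q}), 2, 2(1+\sqrt{q})$ (appropriately scaled), precisely the three values excluded from the extension process. Adding gives $4^{m+1}-1 = \#(V_{m+1}\setminus V_0)$, which matches the dimension of the Dirichlet eigenspace on $V_{m+1}$. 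Finally, Proposition \ref{phiorder} guarantees that all $4^{m+1}-1$ eigenvalues so produced are distinct, so no double counting occurs and the listed eigenpairs form a complete basis. The theorem then follows by assembling these three ingredients: well-definedness of the extension, verification at old vertices, and the counting/distinctness argument completing the spectrum.
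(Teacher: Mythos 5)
Your proposal is correct and follows essentially the same route as the paper, which explicitly states that ``the proof lies in the work shown above'': the extension formulas solving the eigenvalue equation at new vertices, the verification at old vertices collapsing all three $i$-value configurations to the single quartic relation, and the count $4(4^m-1)+3 = 4^{m+1}-1$ combined with Corollary \ref{intborn} and the distinctness from Proposition \ref{phiorder}. The only quibble is notational: the $\Phi_i$ are themselves the inverse branches of the quartic, so one writes $\lambda_{m+1}=\Phi_i(\lambda_m)$ rather than $\Phi_i^{-1}(\lambda_m)$.
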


\section{Data on the Interval}
In this section, we will present the experimental data produced for the Interval for $p = 0.1, 0.9$.
\subsection{Eigenvalues and Eigenfunctions}

Below we have two tables of eigenvalues of the Dirichlet Laplacian on $I$ the first three levels of graph approximation. The table on the left presents the eigenvalues for $p=0.1$, and the table on the right presents the eigenvalues for $p=0.9$.

\begin{center}
\tiny
    \begin{tabular}{l|l|l|l}%
    $n$ & $m = 1$ & $m = 2$ & $m = 3$ 
    \csvreader[head to column names]{intp01.csv}{}
    {\\\hline\csvcoli&\csvcolii&\csvcoliii&\csvcoliv}
    \end{tabular}
    \quad
    \begin{tabular}{l|l|l|l}%
    $n$ & $m = 1$ & $m = 2$ & $m = 3$ 
    \csvreader[head to column names]{intp09.csv}{}
    {\\\hline\csvcoli&\csvcolii&\csvcoliii&\csvcoliv}
    \end{tabular}
    \captionof{table}{Eigenvalues of the Laplacian on the Interval. Left: $p=0.1$, Right: $p=0.9$}
    \label{intervaleigval}
\end{center}

Notice that, at each level, there exist identical eigenvalues for $p=0.1$ and $p=0.9$, and the locations ($n$) of identical eigenvalues are $8k-2$, $8k+2$, $8(2k+1)$, $8(8k+4)$ with periodic period 64. This pattern occurs for all interchanged pairs of $p$ and $q$ as far as the property $p+q=1$ is preserved. We give a generalized description of this pattern in the following corollary of Proposition \ref{phiorder}.

\begin{cor}
Let $\lambda_{n,p}^{(m)}$ denote the $n$th eigenvalue of $\Delta_m^{(p)}$ and let $p+q=1$. Then $\lambda_{n, p}^{(m)} = \lambda_{n, q}^{(m)}$ if $n \equiv \frac{4^a}{2} \mod 4^a$ for some $a \leq m$.
\end{cor}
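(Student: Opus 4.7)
The plan is to induct on $m$, leaning on Proposition~\ref{phiorder}. The key algebraic observation to set up first is that $1 - p\lambda + p^2\lambda = 1 - p(1-p)\lambda = 1 - pq\lambda$, so each decimation map $\Phi_i$ and the renormalization factor $(4/pq)^{m+1}$ depend on $p$ only through the symmetric product $pq$. In particular, as functions of $\lambda$, the four $\Phi_i$ are identical whether one works with parameter $p$ or parameter $q = 1-p$. The only asymmetry between the spectra of $\Delta_m^{(p)}$ and $\Delta_m^{(q)}$ therefore enters through the born eigenvalues $2(1\pm\sqrt{q})$ versus $2(1\pm\sqrt{p})$, while the middle born value $2$ is common to both.

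The base case $m=1$ is immediate: the level-$1$ spectrum is $\{2(1-\sqrt{q}), 2, 2(1+\sqrt{q})\}$ for parameter $p$ and $\{2(1-\sqrt{p}), 2, 2(1+\sqrt{p})\}$ for parameter $q$, and only the middle eigenvalue at $n = 2 = 4^1/2$ agrees, matching the case $a = 1$.

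For the inductive step, I would write $s = 4^m - 1$ and read off the level-$(m+1)$ ordering from Proposition~\ref{phiorder}. Position $n = 2s+2 = 4^{m+1}/2$ holds the born value $2$, which is identical for $p$ and $q$; this supplies the $a = m+1$ case of the congruence. Positions $n = s+1 = 4^m$ and $n = 3s+3 = 3\cdot 4^m$ hold $2(1\mp\sqrt{q})$, which generically differ; a quick divisibility check confirms that neither index satisfies $n \equiv 2\cdot 4^{a-1}\pmod{4^a}$ for any $a \le m+1$, so the claim is vacuous there. At all remaining positions, $\lambda_{n,p}^{(m+1)} = \Phi_i(\lambda_{\sigma_i(n), p}^{(m)})$ with $\sigma_1(n) = n$, $\sigma_2(n) = 2s+2 - n$, $\sigma_3(n) = n - 2s - 2$, and $\sigma_4(n) = 4s+4 - n$. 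Since the shifts $2s+2$ and $4s+4$ are divisible by $4^a$ for every $a \le m$, each $\sigma_i$ preserves the residue class $2\cdot 4^{a-1} \pmod{4^a}$, so the inductive hypothesis on level $m$ yields $\lambda_{\sigma_i(n),p}^{(m)} = \lambda_{\sigma_i(n),q}^{(m)}$, and applying the $p$-symmetric map $\Phi_i$ (identical for both parameters) promotes this to the desired agreement on level $m+1$.

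I expect the hardest part to be essentially bookkeeping: verifying that the shifts in the four reindexings $\sigma_i$ genuinely preserve the congruence class for every relevant $a$, and confirming that the two ``asymmetric'' born positions $s+1$ and $3s+3$ fall outside the predicted agreement set. The conceptual content really reduces to the single identity $1 - p\lambda + p^2\lambda = 1 - pq\lambda$, together with the fact that the central born eigenvalue on each level equals the $p$-independent value $2$.
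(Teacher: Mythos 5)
Your proof is correct and follows essentially the same route as the paper's: induction on the level, using the ordering of Proposition \ref{phiorder} to track how the four maps $\Phi_i$ reindex the spectrum, checking that those reindexings preserve the residue class $\frac{4^a}{2} \bmod 4^a$, and handling the three born eigenvalues separately. Your write-up is in fact somewhat more careful than the paper's, since you make explicit both the identity $1 - p\lambda + p^2\lambda = 1 - pq\lambda$ (which is why the $\Phi_i$ are symmetric in $p \leftrightarrow q$) and the verification that the asymmetric born positions $s+1$ and $3s+3$ fall outside the congruence classes in question.
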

\begin{proof}
By induction: Base case is trivial on level $1$, where $\lambda_{2, p}^{(1)} = \lambda_{2, q}^{(1)}$. Assume the claim holds on level $m-1$. If $n\equiv \frac{4^a}{2} \mod 4^a$ where $a < m$, it is easy to see that this will still hold in level $m$ for all $a < m$ by the way that the four mappings above act on the eigenvalues on level $m-1$. If $i=1, 3$,
\begin{align*}
\Phi_i(\lambda_{n, p}^{(m-1)}) &= \lambda_{n+(i-1)(s+1), p}^{(m)} \\
&= \lambda_{n+(i-1)(s+1), q}^{(m)}
\end{align*}
and if $i=2, 4$,
\begin{align*}
\Phi_i(\lambda_{n, p}^{(m-1)}) &= \lambda_{-n+i(s+1), p}^{(m)} \\
&= \lambda_{-n+i(s+1), q}^{(m)}
\end{align*}
where $s$ denotes the number of eigenvalues on level $m-1$. By assumption, $n+(i-1)(s+1) \equiv \frac{4^a}{2} \mod 4^a$ and $-n+i(s+1) \equiv \frac{4^a}{2} \mod 4^a$ where $a < m$, since $s = 4^{m-1}-1$.

Now we only need to account for the new eigenvalue born on level $m$, which will occur at the middle of the spectrum. This is  $\lambda_{\frac{(4^m-1)+1}{2}, p}$, which is equal to $\lambda_{\frac{(4^m-1)+1}{2}, q}$ since both are equal to $2\left(\frac{4}{pq}\right)^m$. Since this is $\frac{4^m}{2} \mod 4^m$, we see that the corollary holds for all $a \leq m$.
\end{proof}

The graph of $\Phi_i$ in Figure \ref{fig:phigraph} shows that there are gaps in the mappings of eigenvalues from level $m$ to $m+1$ at $(\Phi_1(4), \Phi_2(4)) \cup (\Phi_3(4), \Phi_4(4))$, and we know that no eigenvalues are born in those intervals at any level. These gaps can be observed in the experimental data as well.

\begin{figure}
\centering
\includegraphics[width=0.75\textwidth]{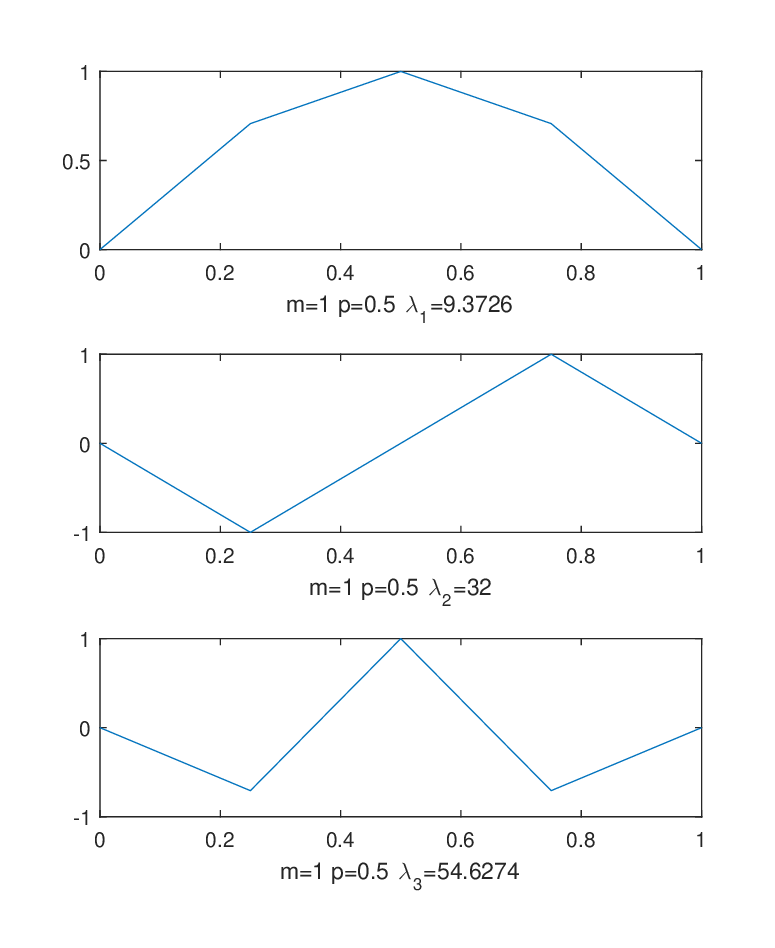}
\caption{First 3 Dirichlet Eigenfunctions on the Interval on level $m=1$. $p=0.5$ (standard case, appx. Sine)}
\label{fig:intervaleigfunction}
\end{figure}

\begin{figure}
\centering
\includegraphics[width=\textwidth]{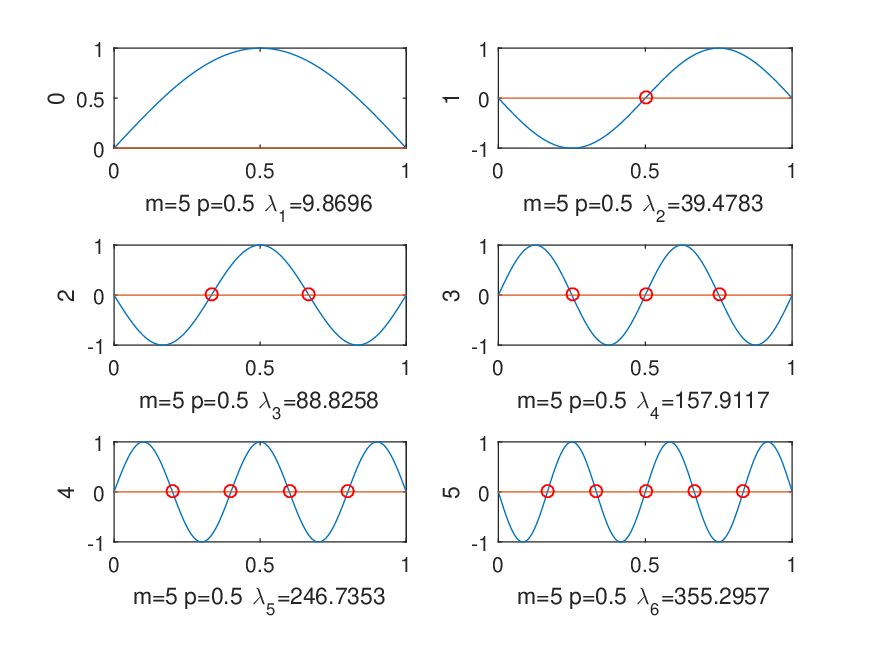}
\caption{First six eigenfunctions on the Interval on level $m=5$ for $p=0.5$ (standard case)}
\label{fig:intervaleigfunction2}
\end{figure}

\begin{figure}
\centering
\includegraphics[width=0.85\textwidth]{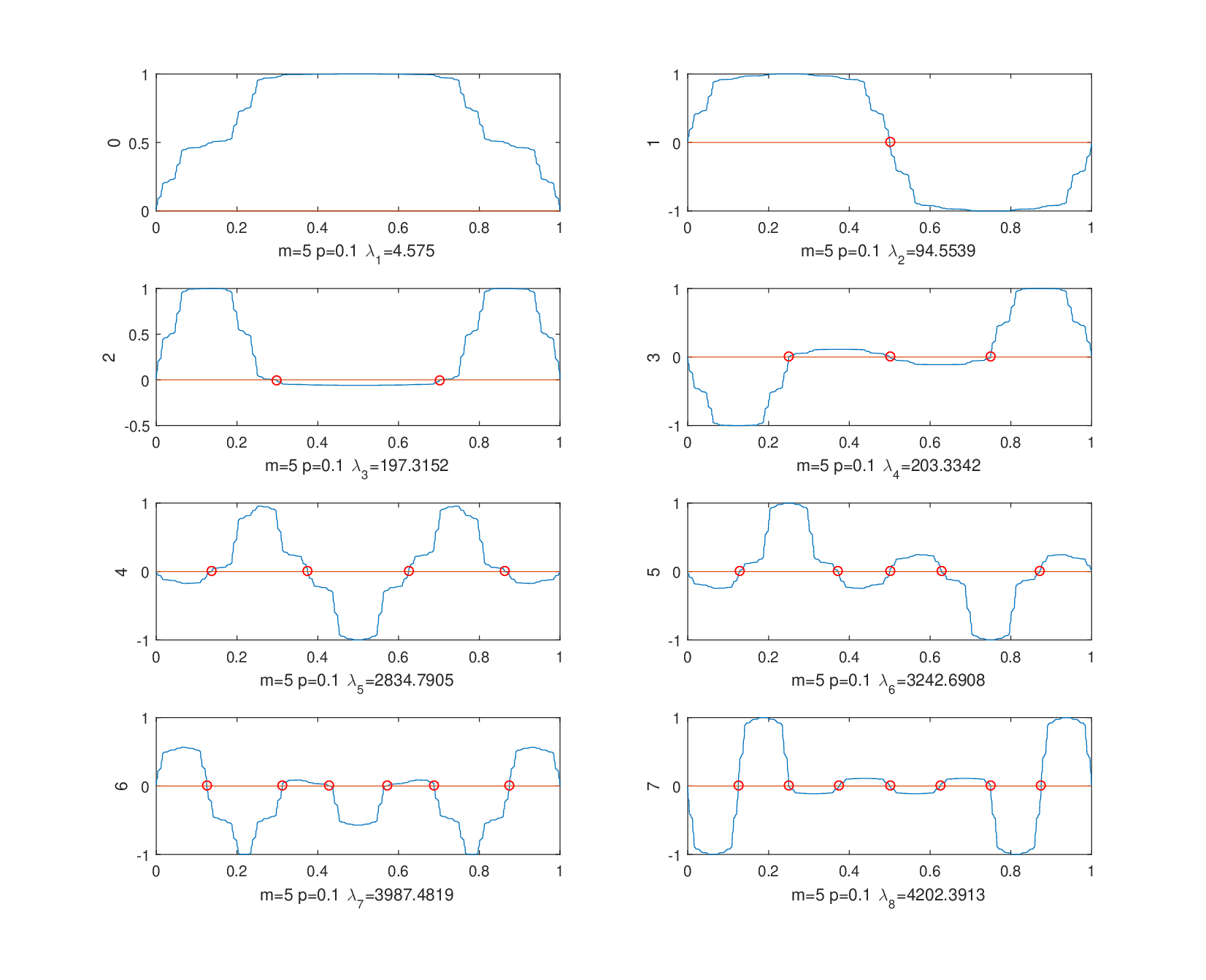}
\includegraphics[width=0.85\textwidth]{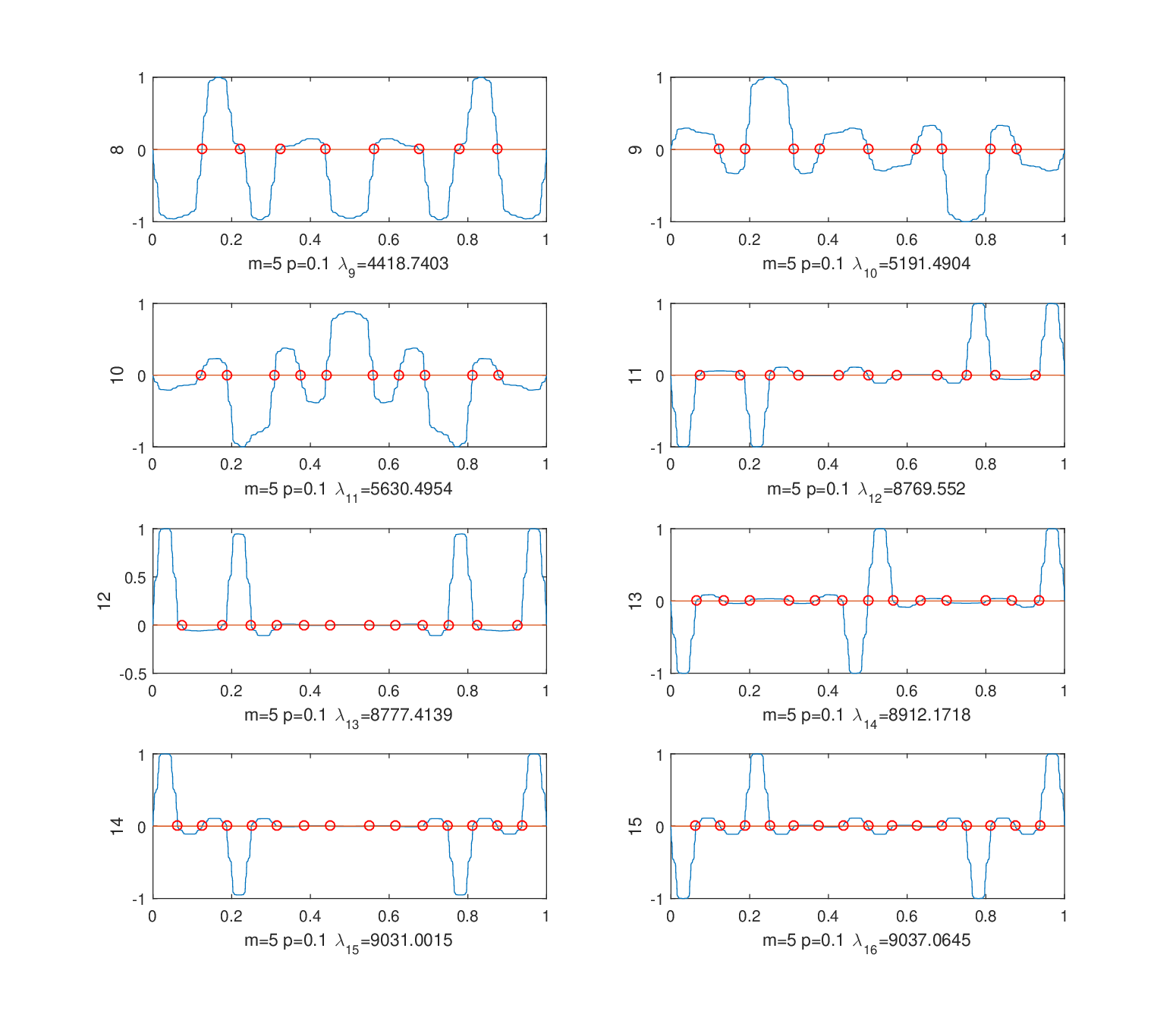}
\caption{First 16 eigenfunctions on the Interval on level $m=5$ for $p=0.1$}
\label{fig:intervaleigfunction4}
\end{figure}

\begin{figure}
\centering
\includegraphics[width=0.85\textwidth]{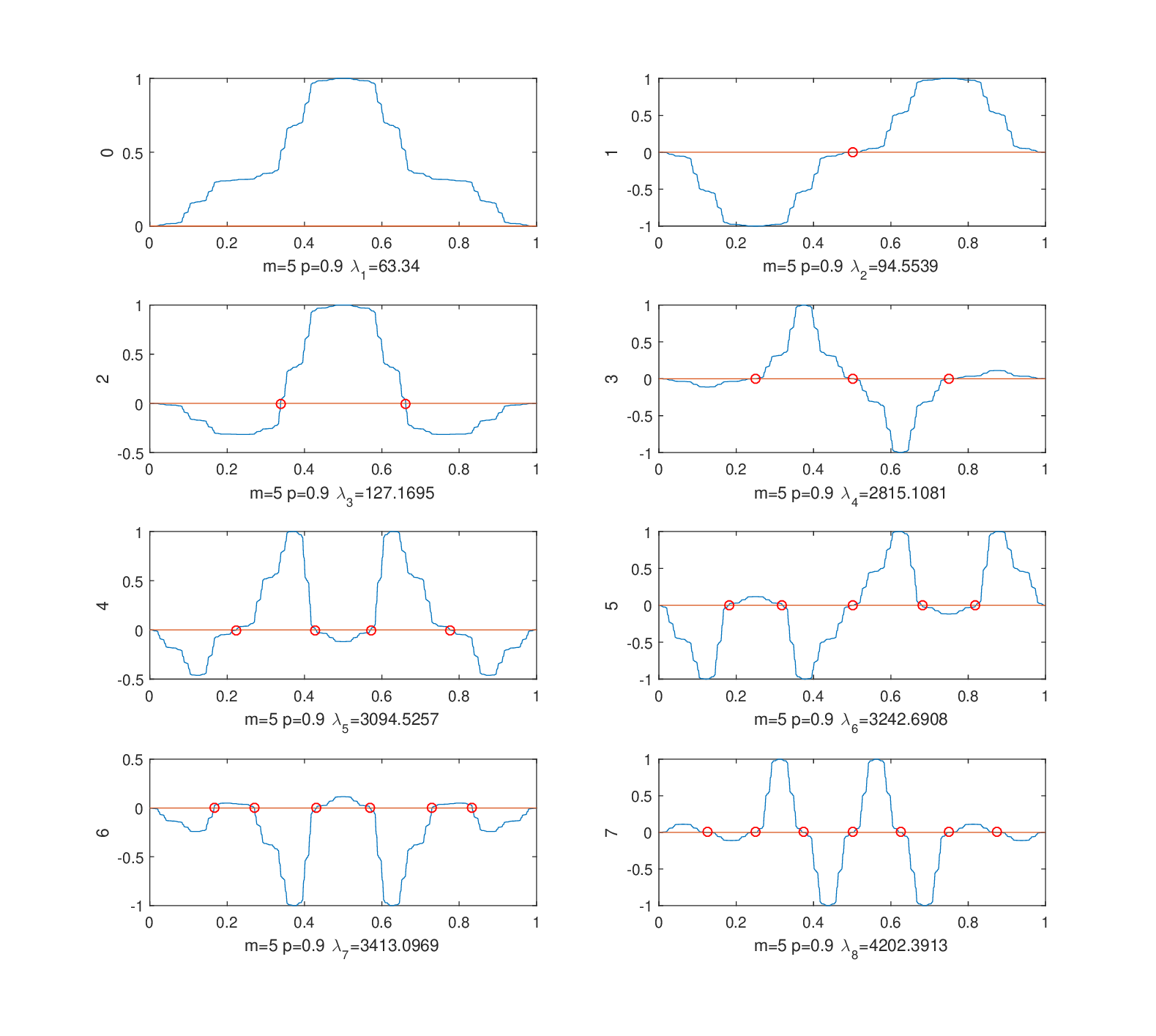}
\includegraphics[width=0.85\textwidth]{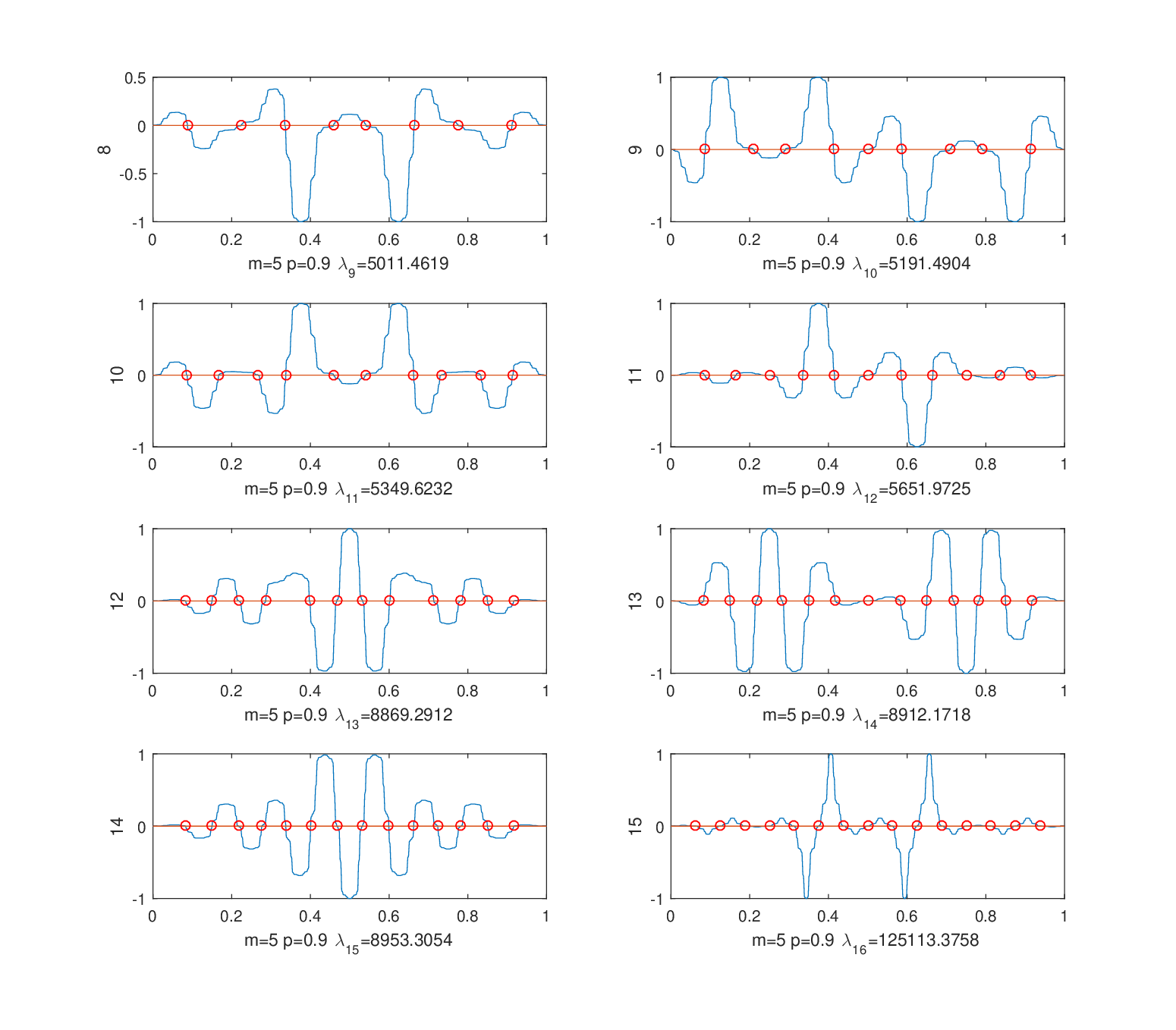}
\caption{First 16 eigenfunctions on the Interval on level $m=5$. $p=0.9$}
\label{fig:intervaleigfunction6}
\end{figure}

Figure \ref{fig:intervaleigfunction} shows all three eigenfunctions of the graph Laplacian at level 1 of the standard case when $p=0.5$, where $\lambda_{i}$ labeled below each graph are the eigenvalues associated with each eigenfunction. Figure \ref{fig:intervaleigfunction2} presents the first six eigenfunctions for the same value of $p$ but on a higher level, $m=5$. As we can see, the eigenfunctions on the Interval of the standard case are trigonometric functions. 

Figure \ref{fig:intervaleigfunction4} and \ref{fig:intervaleigfunction6} are the eigenfunction plots for the Interval with $m=5$ but different values of $p$. The value of $p$ for Figures \ref{fig:intervaleigfunction4} and \ref{fig:intervaleigfunction6} are $0.1$ and $0.9$, respectively. The eigenfunctions still resembles the trigonometric functions although not as much as the standard case.

\subsection{Eigenvalue Counting Function and Weyl Plot}

We define the eigenvalue counting function: 

\begin{equation}
N(x) = \#\{\lambda \ |  \ \lambda\leq x, \text{ for all eigenvalues }\lambda\}
\end{equation}

Figure \ref{fig:intervalcounting} shows eigenvalue counting functions of $p=0.1$ and $p=0.9$ at level 5. The eigenvalue counting function plots of every pair of interchanged $p$ and $q$ (for example, $p=0.1$ and $p=0.9$) look almost identical due to the set of matching eigenvalues. 

\begin{figure}
\centering
\includegraphics[width=0.49\textwidth]{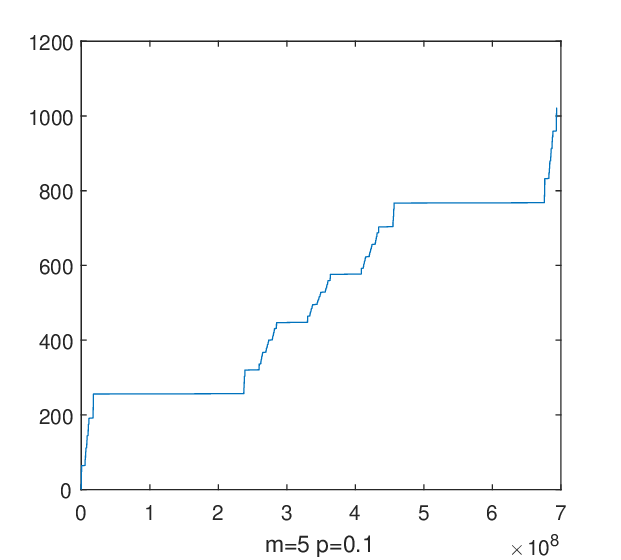}
\includegraphics[width=0.49\textwidth]{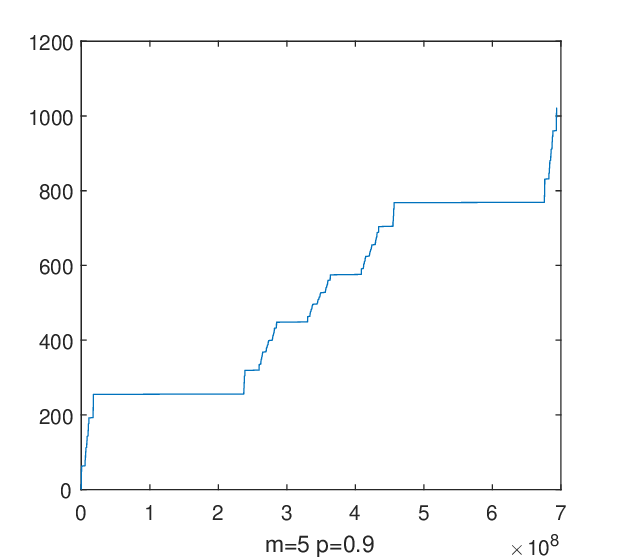}
\caption{Counting functions on the Interval on level $m=5$. Left: $p=0.1$, Right: $p=0.9$}
\label{fig:intervalcounting}
\end{figure}

By linear regression on a log-log plot, we can obtain numerical estimates for the classic associated power ratio $\alpha$, and then we can compute the Weyl ratio $W(x)=\frac{N(x)}{x^{\alpha}}$.

However, besides the numerical approach, we also have an algebraic expression for $\alpha$. At level $m$ the Laplacian renormalization factor is $(\frac{4}{pq})^{m}$, and thus $N((\frac{4}{pq})^{m}) \approx 4^{m}$. Therefore $(\frac{4}{pq})^{m\alpha} \approx 4^{m}$, and we get $\alpha = \frac{log4}{log(\frac{4}{pq})}$. Figure \ref{fig:intervalweyl} shows the Weyl plots of $p=0.1$ and $p=0.9$ at level 5. The power ratio $\alpha$ is labeled below the x-axis of each graph. Note that $p$ and $q$ also enter the expression for $\alpha$ symmetrically. 

\begin{figure}
\centering
  \includegraphics[width=0.49\textwidth]{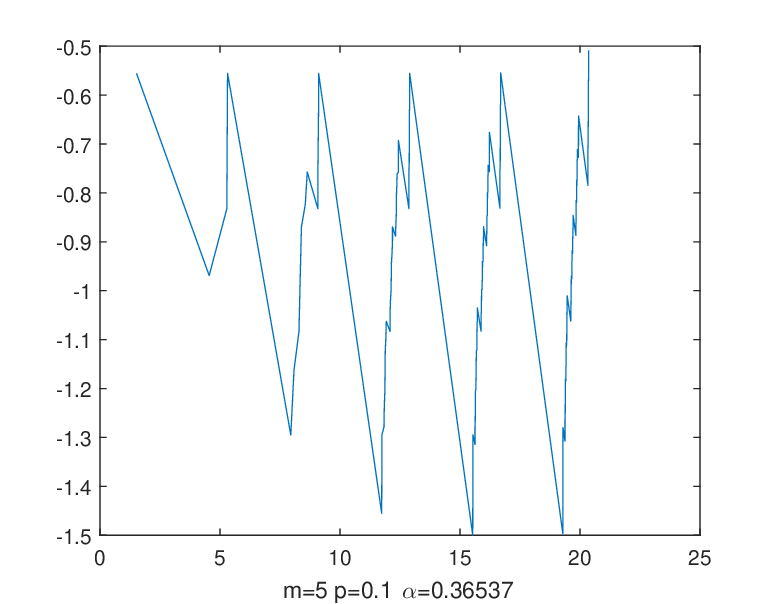}
  \includegraphics[width=0.49\textwidth]{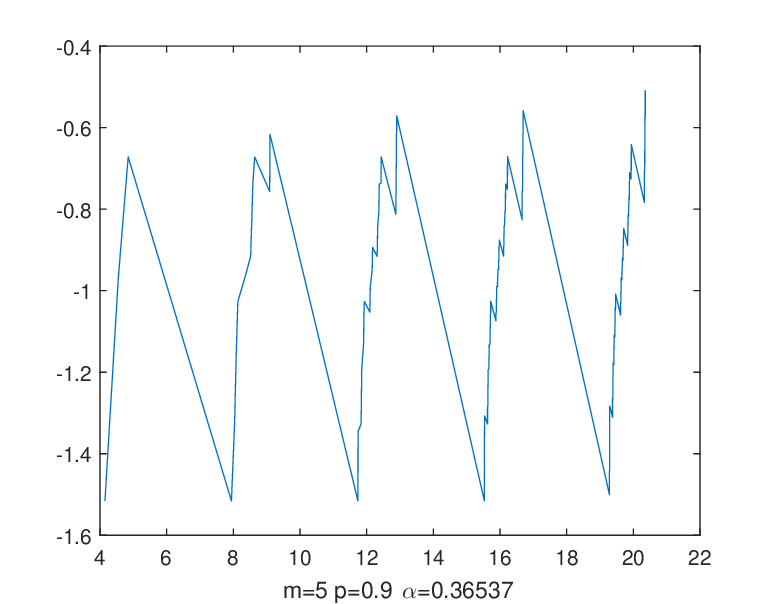}
  \caption{Weyl plots on the Interval at level $m=5$. Left: $p=0.1$, Right: $p=0.9$}
  \label{fig:intervalweyl}
\end{figure}

\subsection{Limiting Laplacians}

Having developed spectral decimation on the Interval, we would like to describe the behavior in the limiting cases, where $p \to 0$ or $p \to 1$. We begin with an analysis of the eigenvalue distribution.

For both limiting cases, the renormalization constant $\frac{4}{pq}$ is unbounded, and so any eigenvalues that are to remain bounded (with respect to $p$) must be very small. Considering the eigenvalues that are born on each level, $\big(\frac{4}{pq}\big)^{m}2$ and $\big(\frac{4}{pq}\big)^{m}2(1+\sqrt{q})$ are both unbounded for both limiting cases. In the case of the first eigenvalue,

\begin{equation}\lim_{p \to 0,m \to \infty} \left(\frac{4}{pq}\right)^{m}\Phi_{1}^{m}(2(1-\sqrt{q}))= 4
\end{equation}

\begin{equation}
\lim_{p \to 1,m \to \infty} \left(\frac{4}{pq}\right)^{m}\Phi_{1}^{m}(2(1-\sqrt{q}))= \infty
\end{equation}

the behavior is different for the two limiting cases. This is verified experimentally in Table \ref{tab:intlim}.

\begin{center}
\tiny
    \begin{tabular}{c|c|c|c}%
    $ $ & $p=10^{-2}$ & $p=10^{-4}$ & $p=10^{-5}$\\
    \hline
    \hline
    $\lambda_{1}$ & $4.0507$ & $4.0005$ & $4.0000$  \\
    \hline
    $\lambda_{2}$ & $813.1334$ & $8.0013*10^{4}$ & $8.0001*10^{5}$ \\
    \hline
    $\lambda_{3}$ & $1632.434$ & $1.6003*10^{5}$ & $1.6000*10^{6}$  \\
    \hline
    $\lambda_{4}$ & $1636.588$ & $1.60004*10^{5}$ & $1.6000*10^{6}$   \\
    \hline
    \end{tabular}
    \quad
    \begin{tabular}{c|c|c|c}%
    $ $ & $p=1-10^{-2}$ & $p=1-10^{-4}$ & $p=1-10^{-5}$\\
    \hline
    \hline
    $\lambda_{1}$ & $731.361$ & $7.9213*10^{4}$ & $7.9748*10^{5}$  \\
    \hline
    $\lambda_{2}$ & $813.1334$ & $8.0013*10^{4}$ & $8.0001*10^{5}$ \\
    \hline
    $\lambda_{3}$ & $895.009$ & $8.0813*10^{4}$ & $8.0254*10^{5}$  \\
    \hline
    $\lambda_{4}$ & $2.9385*10^{5}$ & $3.1686*10^{9}$ & $3.1899*10^{11}$   \\
    \hline
    \end{tabular}
    \captionof{table}{Eigenvalues of limiting Laplacians on the Interval}
    \label{tab:intlim}
\end{center}

Patterns also develop in the relationships of the unbounded eigenvalues -- these will be addressed in the upcoming section. The behavior of the eigenfunctions associated with these eigenvalues is complex and is connected to the sequence of $\Phi$ maps used to reach each individual eigenvalue. However, here we provide a clear analysis of a single example -- the eigenfunction associated with $\lambda_{1} = 4$ as $p \to 0$. We can explicitly take the limits of the four eigenfunction extension formulas as $p \to 0$. Writing the extensions to $y_{1},z,$ and $y_{2}$ as functions of values $x_{1},x_{2},p,\lambda$,

\begin{align}
    \lim_{p\to 0} y_{1}(x_{1},x_{2},p,\Phi_{1}(2(1-\sqrt{1-p}))) &= \frac{x_{1}+x_{2}}{2} \nonumber\\
    \lim_{p\to 0} z(x_{1},x_{2},p,\Phi_{1}(2(1-\sqrt{1-p}))) &= \frac{x_{1}+x_{2}}{2} \nonumber\\
    \lim_{p\to 0} y_{2}(x_{1},x_{2},p,\Phi_{1}(2(1-\sqrt{1-p}))) &= \frac{x_{1}+x_{2}}{2}
\end{align}

Here we use $\Phi_{1}(2(1-\sqrt{1-p})$ as the eigenvalue because this will lead us to $\lambda = 4$. Since the Dirichlet eigenfunction associated with this eigenvalue on level 1 is uniformly 1 on $\frac{1}{4},\frac{1}{2},\frac{3}{4}$, this extension algorithm produces two Cantor functions, joined by an interval of uniformly 1. These theoretical results are supported numerically in Figure \ref{fig:cantor}, where the Devil's Staircase is clearly visible on the Interval $[0,\frac{1}{4}]$ (and again in reverse on $[\frac{3}{4},1]$).

Another observation is that the miniaturization algorithm on eigenfunctions that unites 4 copies of an eigenfunction has interesting limiting behavior. The ratio $\frac{p}{q}$ used in the piecewise definition will go to $0$ with $p \to 0$ and go to $\infty$ with $p \to 1$. After normalization, this causes eigenfunctions produced in this manner to have support limited to the inner half of the Interval as $p \to 1$ and support limited to the outer quarters of the Interval as $p \to 0$.

\begin{figure}
\centering
  \includegraphics[width=0.5\textwidth]{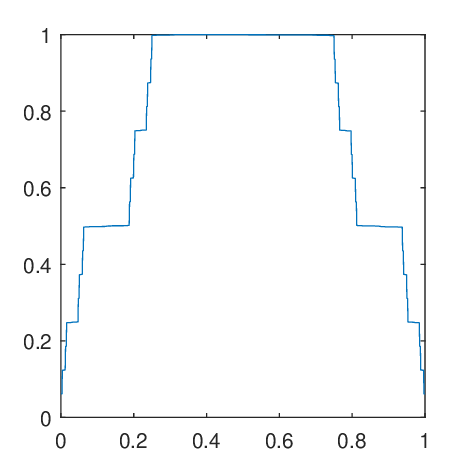}
  \caption{Ground state eigenfunction for $p=0.001$, $m=4$}
  \label{fig:cantor}
\end{figure}

\subsection{Ratios of Eigenvalues}

We may tackle the behavior of unbounded eigenvalues by instead examining their behavior using the ratios between eigenvalues -- this effectively removes the renormalization constant by division. In the standard case, taking ratios of squares of integers forms a dense set. The eigenfunctions that are born on level $m$, $2(1-\sqrt{q}),2,2(1+\sqrt{q})$ will converge to $0,2,4$ as $p \to 0$ and $2,2,2$ as $p \to 1$. The eigenvalues produced through decimation are given by $\Phi_{1},\Phi_{2},\Phi_{3},\Phi_{4}$ applied to eigenvalues, but explicitly taking limits of these maps gives us values independent of the eigenvalue. Specifically

\begin{align}
    \lim_{p\to 0} \{\Phi_{1},\Phi_{2},\Phi_{3},\Phi_{4}\} &= \{0,2,2,4\} \nonumber\\
    \lim_{p\to 1} \{\Phi_{1},\Phi_{2},\Phi_{3},\Phi_{4}\} &= \{0,2,2,4\}
\end{align}

Then, excepting larger ratios derived from comparing 2 or 4 to values near 0, we expect the set of ratios to approach the set of fractions obtained by choosing from $\{2,4\}$, namely $\{\frac{1}{2},1,2\}$. Indeed, we find numerical evidence supporting this claim, visible in Figure \ref{fig:intratio}.

\begin{figure}
\centering
    \includegraphics[width=0.3\textwidth]{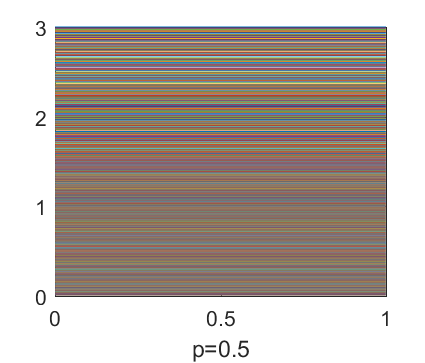}
    \includegraphics[width=0.3\textwidth]{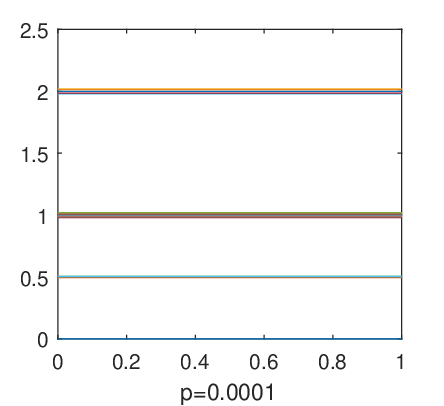}
    \includegraphics[width=0.3\textwidth]{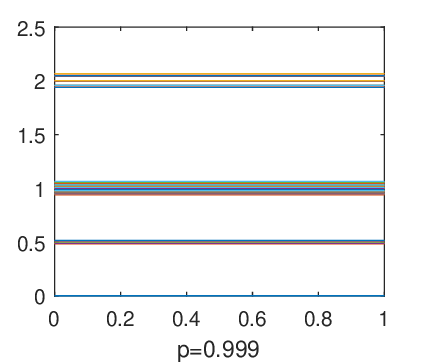}
  \caption{Eigenvalue ratios for $p=\frac{1}{2}$, $p=0.0001$, $p=0.999$}
  \label{fig:intratio}
\end{figure}

\section{Zeros and Extrema of Eigenfunctions on the Interval}
The eigenfunctions of the standard Laplacian on the Interval are well studied as the eigenfunctions of the second derivative. They are a special case of Sturm-Liouville equations, which are second-order homogeneous linear differential equations of form
\begin{equation}
\frac{d}{dx}\left[p(x) \frac{du}{dx}\right] + [\lambda\rho(x) -q(x)]u=0
\end{equation}
which yield the Laplacian equation if $p(x) = 1$, $\rho(x) = 1$, and $q(x) = 0$. Studying eigenfunctions, i.e. sine curves, in this way allows us to characterize them by their zeros and extrema. Classic Sturm-Liouville theory proves the following theorem to do this:
\begin{thm}[Sturm Comparison Theorem]\label{cmpthm}
Let $P(x) \geq P_1(x) > 0$ and $Q_1(x) \geq Q(x)$ in the differential equations
\begin{align}
\frac{d}{dx}\left(P(x)\frac{du}{dx}\right) + Q(x)u &= 0 \\
\frac{d}{dx}\left(P_1(x)\frac{du_1}{dx}\right) + Q_1(x)u_1 &= 0
\end{align}
Then, between any two zeros of a nontrivial solution $u(x)$ of the first differential equation, there lies at least one zero of every solution of the second differential equation, except when $u(x) \equiv cu_1(x)$. This implies $P\equiv P_1$ and $Q\equiv Q_1$, except possibly in intervals where $Q \equiv Q_1 \equiv 0$.
\end{thm}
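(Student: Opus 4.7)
The plan is to prove this classical Sturm comparison theorem via the Picone identity, which elegantly packages the comparison as a sum of nonnegative terms. First I would verify, by direct differentiation and substitution using both ODEs to eliminate second derivatives, the identity
$$\frac{d}{dx}\!\left[\frac{u}{u_1}\bigl(P\,u'\,u_1 - P_1\,u\,u_1'\bigr)\right] \;=\; (Q_1-Q)\,u^2 \;+\; (P-P_1)(u')^2 \;+\; P_1\!\left(u' - \frac{u\,u_1'}{u_1}\right)^{\!2}.$$
Each summand on the right is nonnegative under the hypotheses $P\ge P_1>0$ and $Q_1\ge Q$, which is the key structural fact that drives the rest of the argument.

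Next, let $a<b$ be two consecutive zeros of the nontrivial solution $u$, and suppose for contradiction that $u_1$ has no zero in $(a,b)$; without loss of generality take $u_1>0$ there. Since $u$ is nontrivial, uniqueness of ODE solutions gives $u'(a),u'(b)\neq 0$, so the zeros of $u$ are simple. I would then integrate the identity from $a$ to $b$. The left-hand boundary term equals $\bigl[u\,P\,u' - (u^2/u_1)\,P_1\,u_1'\bigr]_a^b$, and both pieces vanish at the endpoints: directly if $u_1(a),u_1(b)>0$, and by a short limit argument using the simplicity of the zeros if $u_1$ happens to vanish at an endpoint (so that $u^2/u_1 \to 0$).

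Consequently the integral of the right-hand side equals zero, and since each summand is nonnegative it must vanish identically on $[a,b]$. The third summand, $P_1(u' - uu_1'/u_1)^2$, forces $(u/u_1)'\equiv 0$ on $(a,b)$, hence $u\equiv c\,u_1$ for some constant $c$; combined with $u(a)=0$ and $u_1(a)>0$ (or the corresponding limit at $a$) this forces $c=0$, contradicting nontriviality of $u$. Thus $u_1$ must have a zero strictly between $a$ and $b$, proving the comparison. For the exceptional case $u\equiv c\,u_1$ stated in the theorem, the vanishing of the first two summands gives $(Q_1-Q)u^2\equiv 0$ and $(P-P_1)(u')^2\equiv 0$, which yields $Q\equiv Q_1$ wherever $u\neq 0$ and $P\equiv P_1$ wherever $u'\neq 0$; this is exactly the exceptional structure claimed, since on an interval where $u$ vanishes identically the equations reduce to $Q\equiv Q_1\equiv 0$.

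The main obstacle is the cleanness of the Picone identity, which must be verified by a somewhat fussy algebraic expansion, and the delicate boundary analysis when $u_1$ happens to share a zero with $u$ at $a$ or $b$; that potential singularity in $u/u_1$ is tamed by the fact that nontrivial solutions of these equations have only simple zeros, so the product $u\cdot(Pu'u_1 - P_1uu_1')/u_1$ extends continuously and vanishes there.
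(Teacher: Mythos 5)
The paper does not actually prove this statement: Theorem \ref{cmpthm} is quoted as a classical result of Sturm--Liouville theory, and the authors explicitly say that the classical proof (which they describe as using operators whose square is the Laplacian and comparison with trigonometric solutions, i.e.\ a Pr\"ufer-type argument) is \emph{not} the strategy they can imitate for their fractal analogue, Theorem \ref{slanalog}. Your Picone-identity argument is a correct, self-contained proof of the classical theorem and is a genuinely different route from the one the paper alludes to: the identity is verified exactly as you state, the boundary terms $uPu'-(u^{2}/u_{1})P_{1}u_{1}'$ do vanish at simple zeros of $u$ even when $u_{1}$ shares the zero, and nonnegativity of the three summands then forces the dichotomy you describe. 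The one place to tighten is the endgame: when $u_{1}$ vanishes at an endpoint, the conclusion $u\equiv cu_{1}$ on $(a,b)$ with $c\neq 0$ is not a contradiction but is precisely the exceptional case the theorem permits, so the argument is more honestly phrased as ``either $u_{1}$ vanishes in $(a,b)$, or $u\equiv cu_{1}$'' rather than a pure reductio; your final paragraph on when $P\equiv P_{1}$ and $Q\equiv Q_{1}$ then covers that branch (noting that a nontrivial $u$ cannot vanish on an interval, so the residual freedom lives only where $u'\equiv 0$, which is where $Q\equiv Q_{1}\equiv 0$). It is worth observing that your method is in fact closer in spirit to the paper's own proof of part (c) of Theorem \ref{slanalog}, which integrates $f_{i}(-\Delta f_{j})-f_{j}(-\Delta f_{i})$ over a nodal interval and analyzes the boundary terms via Gauss--Green: that is the discrete Lagrange/Green identity, of which Picone's identity is the refinement needed when the two equations have different leading coefficients $P$ and $P_{1}$.
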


It follows from the Sturm Comparison theorem that if we have two eigenfunctions $u_1, u_2$ of the standard Laplacian on the Interval such that
\begin{align}
-\Delta u_1 &= \lambda_1 u_1\\
-\Delta u_2 &= \lambda_2 u_2
\end{align}
where $\lambda_1, \lambda_2$ are constants, if $\lambda_2 > \lambda_1 > 0$, between every pair of zeros of $u_2$, $u_1$ will also have at least one zero. This is a special case of the theorem. It is easy to verify this result since we know that the eigenfunctions for the standard Laplacian are of form $f(x) = \sin(k\pi x)$.

We establish an analogous result to the special case of the Sturm Comparison Theorem for all $\Delta^{(p)}$ in the theorem below. The proof of Theorem \ref{cmpthm} involves linear operators that, when twice iterated, equal the Laplacian (i.e. the second derivative) and classic trignometric functions that allow one to exploit useful facts about their zeros. We were not able to employ such strategies, as we lacked analogous notions of derivative and trignometric functions that would help us. Therefore, the proof below uses different techniques than those used in classic Sturm-Liouville theory. We note that similar results for related Laplacians have been obtained in \cite{Bird}.

\begin{thm}\label{slanalog}
Let $\lambda_i$ be the $i$th eigenvalue and $f_i$ be the eigenfunction for $\lambda_i$.
\begin{itemize}
\item[(a)] For any eigenfunction $f$ of the Interval, there is exactly one local extremum between two consecutive zeros.
\item[(b)] $f_i$ has $i-1$ zeros.
\item[(c)] If $\lambda_i < \lambda_j$ and $x_k, x_{k+1}$ are consecutive zeros of $f_i$, then $f_j$ has at least one zero in $[x_k, x_{k+1}]$.
\end{itemize}
\end{thm}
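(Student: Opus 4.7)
The plan is to handle the three assertions by three somewhat different tools: part (a) follows from the eigenfunction equation at a single vertex, part (b) from recognizing $-\Delta_m^{(p)}$ as a classical Jacobi matrix, and part (c) from a discrete Wronskian identity in the spirit of Sturm--Liouville theory.

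For (a), I would let $f$ be any Dirichlet eigenfunction with eigenvalue $\lambda>0$ and fix a maximal block of consecutive interior vertices $v_a,\dots,v_b$ on which $f>0$. At a strictly interior vertex $v_k$ of the block, the equation $-\Delta_m^{(p)}f(v_k)=\lambda f(v_k)$ becomes
\begin{equation*}
c(I_{k-1}^{(m)})\bigl(f(v_k)-f(v_{k-1})\bigr)+c(I_k^{(m)})\bigl(f(v_k)-f(v_{k+1})\bigr)=\mu_k^{(m)}\lambda f(v_k)>0.
\end{equation*}
If $v_k$ were a local minimum, both terms on the left would be non-positive, contradicting positivity of the right-hand side. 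Hence $f$ admits no local minimum within a positive block, forcing the restriction of $f$ to be unimodal with a single local maximum; the negative case is symmetric. For (b), I would observe that on functions vanishing on $V_0$ the operator $-\Delta_m^{(p)}$ is, after the conjugation $f\mapsto\sqrt{\mu_v^{(m)}}f$, a symmetric tridiagonal matrix with strictly negative off-diagonal entries $-c(I_k^{(m)})/\sqrt{\mu_k^{(m)}\mu_{k+1}^{(m)}}$. The classical oscillation theorem for Jacobi matrices then provides simple eigenvalues and exactly $i-1$ sign changes in the $i$th eigenvector, equivalent to $i-1$ zeros of the piecewise linear interpolant.

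For (c), I would introduce the discrete Wronskian
\begin{equation*}
W_k(f,g)=c(I_k^{(m)})\bigl[f(v_k)g(v_{k+1})-f(v_{k+1})g(v_k)\bigr],
\end{equation*}
for which the eigenfunction equations at $v_k$ applied to $f_i$ and $f_j$ yield the telescoping identity
\begin{equation*}
W_k(f_i,f_j)-W_{k-1}(f_i,f_j)=\mu_k^{(m)}(\lambda_i-\lambda_j)f_i(v_k)f_j(v_k).
\end{equation*}
I would then fix a lobe on which $f_i>0$, bounded by consecutive zeros $x_k\in(v_{a-1},v_a)$ and $x_{k+1}\in(v_b,v_{b+1})$ of the interpolant $\widetilde f_i$, and assume for contradiction that $\widetilde f_j$ has no zero on $[x_k,x_{k+1}]$ (WLOG $\widetilde f_j>0$ there). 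Unwinding the linear interpolation that defines $x_k$ and $x_{k+1}$, the positivity of $\widetilde f_j$ at these two points translates respectively into the inequalities $W_{a-1}<0$ and $W_b>0$. Summing the Wronskian identity from $l=a$ to $l=b$ then gives
\begin{equation*}
0<W_b-W_{a-1}=(\lambda_i-\lambda_j)\sum_{l=a}^{b}\mu_l^{(m)}f_i(v_l)f_j(v_l)<0,
\end{equation*}
a contradiction. Boundary lobes are treated the same way, using $W_0=W_{4^m-1}=0$ under the Dirichlet condition.

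The main obstacle is the sign translation between the positivity of $\widetilde f_j$ at the interpolation points $x_k,x_{k+1}$ and the sign of the Wronskian at the bounding vertices $v_{a-1},v_b$: one must carry out the linear interpolation computation carefully to see that $\widetilde f_j(x_k)>0$ is exactly the statement $f_i(v_a)f_j(v_{a-1})-f_i(v_{a-1})f_j(v_a)>0$, and one must treat separately the borderline cases in which $x_k$ or $x_{k+1}$ coincides with a vertex of $V_m$ (in which case the corresponding Wronskian endpoint term simplifies). Once this calibration is in place, the Wronskian identity delivers the contradiction in a single step, and together with parts (a) and (b) this completes the Sturm--Liouville-type theorem.
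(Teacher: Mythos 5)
Your proposal is correct, and parts (b) and (c) take a genuinely different (and in places tighter) route than the paper. Part (a) is essentially identical to the paper's argument: both use the sign of $-\Delta f=\lambda f$ at a local extremum to rule out two extrema of the same type, hence two extrema at all, between consecutive zeros. For part (b) the paper instead starts from the known case $p=\tfrac12$ (classical Sturm--Liouville) and deforms continuously in $p$, arguing that the zero count cannot jump because a zero can only appear or disappear through a degenerate tangency, which the eigenvalue equation forbids; your route via the oscillation theory of Jacobi matrices (symmetrize by $\sqrt{\mu^{(m)}_k}$, note the off-diagonal entries are nonzero of one sign, conclude simplicity of eigenvalues and exactly $i-1$ sign changes of the $i$th eigenvector) is self-contained, avoids the continuity-of-eigenfunctions assertion that the paper leaves unproved, and gives simplicity of the spectrum for free. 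For part (c) the paper integrates the Lagrange identity $(\lambda_j-\lambda_i)f_if_j=f_i(-\Delta f_j)-f_j(-\Delta f_i)$ over the lobe $[x_k,x_{k+1}]$ and applies the Gauss--Green formula, reading off a sign contradiction from the normal derivatives of $f_i$ at its zeros; your telescoped discrete Wronskian identity is precisely the discrete counterpart of that computation, and your calibration step (showing $\widetilde f_j(x_k)>0$ is equivalent to $W_{a-1}<0$ after clearing the positive denominator $f_i(v_a)-f_i(v_{a-1})$ from the interpolation formula, and likewise at $x_{k+1}$) handles more carefully than the paper does the fact that the zeros of the interpolant need not lie in $V_m$, where the meaning of the boundary term in the Gauss--Green formula is otherwise left implicit. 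The only point worth adding is the observation that if $\widetilde f_j$ has no zero on the closed lobe then $f_j$ is of one strict sign on all of $v_a,\dots,v_b$, so the sum $\sum_{l=a}^{b}\mu_l^{(m)}f_i(v_l)f_j(v_l)$ is strictly positive and the contradiction is genuine; with that noted, your argument is complete.
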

\begin{proof}
(a) If $f(x)$ is a local maximum, then $-\Delta f(x) > 0$. Since $\lambda > 0$ and $-\Delta f(x) = \lambda f(x)$, $f(x) > 0$. Similarly, if $f(x)$ is a local minimum, $f(x) < 0$. Since if $z, w$ are consecutive zeros, $f(a) > 0$ for all $z < a < w$ or $f(a) < 0$ for all $z < a < w$, there can be either only maxima or only minima between two zeros, meaning there can only be one extrema. 

(b) The result is true for $p=0.5$ by Theorem \ref{cmpthm}. We claim that as $p$ varies continuously, $\lambda_{p,i}$ and $f_{p, i}(x)$ for all points $x$, the $i$th eigenvalue and the value at the $i$th eigenfunction of $\Delta^{(p)}$, also vary continuously. This means that if there exists $p$ such that $f_{p, i}$ does not have $i-1$ zeros, $f_{p, i}$ must have morphed continuously from $f_{0.5, i}$ to do so. We know that in order for $\Delta^{(p)} f_{p,i}(x) = 0$, on some neighborhood $A$ of $x$, for all $x_0, x_1\in A$ where $x_0<x$ and $x_1>x$, $f(x_0) < 0 < f(x_1)$ or $f(x_0) > 0 > f(x_1)$. However, in order for the number of zeros of $f_{p,i}$ to change from that of $f_{0.5, i}$, there has to exist $p'$ such that $f_{p', i}$ has a zero $x$ where on some neighborhood $A$ of $x$, $f_{p',i}(y) \geq 0$ for all $y\in A$ or $f_{p',i}(y) \leq 0$ for all $y\in A$. This is a contradiction. Therefore, the number of zeros for the $i$th eigenfunction stays constant as $p$ varies. 

(c) Proof by contradiction. Consider $\lambda_i < \lambda_j$, with eigenfunctions $f_i, f_j$ respectively, of $\Delta_m^{(p)}$ given $m$ and $p$. If the statement were false, then there would exist consecutive zeros of $f_i$, $x_k, x_{k+1}$, such that $f_j$ does not have a zero in $A = [x_k, x_{k+1}]$. Since $f_i, f_j$ are eigenfunctions,
\begin{align}
-\Delta f_i &= \lambda_i f_i \\
-\Delta f_j &= \lambda_j f_j \\
\implies (\lambda_j -\lambda_i)f_if_j &= f_i(-\Delta f_j)-f_j(-\Delta f_i) &&\text{by algebra}
\end{align}
We can assume without loss of generality that $f_i$ and $f_j$ are both positive on $A$. By the Gauss-Green formula,
\begin{align}
\int_A f_i(-\Delta f_j)d\mu-\int_A f_j(-\Delta f_i) d\mu &= \sum_{x \in \partial A} \big((f_j \partial_n f_i)(x)-(f_i \partial_n f_j)(x)\big)
\end{align}

So, 
\begin{align}
\int_A (\lambda_j - \lambda_i) f_i f_j d\mu &= \int_A f_i(-\Delta f_j)d\mu-\int_A f_j(-\Delta f_i) d\mu \\
&= \sum_{x \in \partial A} \big((f_j \partial_n f_i)(x)-(f_i \partial_n f_j)(x)\big) \\
&= \sum_{x \in \partial A} \big((f_j \partial_n f_i)(x)-(0)(\partial_n f_j)(x)\big) &&\text{because } f_i(\partial A) = 0\\
&= (f_{j}\partial_n f_i)(x_k) + (f_j \partial_n f_i)(x_{k+1})
\end{align}

Since $f_i$ is positive on $A$, the normal derivative must be negative at the boundaries of A, i.e. $\partial_n f_i(x_k), \partial_n f_i(x_{k+1}) < 0$. We assumed that $f_j(x_k), f_j(x_{k+1}) > 0$, so the RHS is negative. But the LHS is positive, since the integrand is positive. This is a contradiction. Therefore, we have proved that $f_{j}$ has at least one zero in $[x_k, x_{k+1}]$. 
\end{proof}

The following corollaries follow directly from Theorem \ref{slanalog}:
\begin{cor}
If $\lambda_i$ is the $i$th eigenvalue and $f_i$ is its eigenfunction, $f_i$ has exactly $i$ local extrema.
\end{cor}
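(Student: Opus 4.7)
The plan is to deduce the corollary directly from parts (a) and (b) of Theorem \ref{slanalog}, together with the Dirichlet boundary conditions. First I would list the zeros of $f_i$ in $[0,1]$: by (b) there are $i-1$ zeros in the open interval, and the Dirichlet conditions supply two more at $x=0$ and $x=1$, so the full zero set can be ordered as $0 = z_0 < z_1 < \cdots < z_i = 1$. These partition $[0,1]$ into $i$ subintervals $[z_{k-1}, z_k]$, each of which has consecutive zeros of $f_i$ as its endpoints. Part (a) then yields exactly one local extremum in each subinterval, for a total of $i$.

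To make the count rigorous I would separately verify that no local extremum of $f_i$ occurs at a zero, so that the $i$ extrema guaranteed by (a) are all distinct and none are double-counted across adjacent subintervals. At an interior zero $z_k$, the pointwise eigenvalue equation $-\Delta_m f_i(z_k) = \lambda_i f_i(z_k) = 0$, combined with the positivity of the Laplacian weights in (\ref{simpintlap}) and $\lambda_i > 0$, forces the two neighbors of $z_k$ in $V_m$ to have opposite signs, ruling out a strict local max or min there. At the boundary points $z_0 = 0$ and $z_i = 1$, the characterization of local extrema used in the proof of (a) (namely $f(x)$ strictly nonzero and of the same sign as $-\Delta f(x)$) is incompatible with $f_i = 0$, so the endpoints contribute no extrema either.

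The main obstacle is purely organizational: interpreting the ``$i-1$ zeros'' of Theorem \ref{slanalog}(b) as interior zeros (consistent with the sine prototype, where $\sin(i\pi x)$ has $i-1$ interior zeros and exactly $i$ extrema on $[0,1]$), and then carefully arranging the partition so that the zeros themselves are disqualified as extrema while each of the $i$ subintervals receives its unique one. With that bookkeeping in place, the corollary follows immediately with no further calculation.
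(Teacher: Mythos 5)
Your proposal is correct and matches the paper's intent: the paper states this corollary as following directly from Theorem \ref{slanalog}, and your argument---counting the $i-1$ interior zeros from (b) plus the two Dirichlet endpoints to get $i$ subintervals, then applying (a) to each---is exactly that direct deduction, with the added (and welcome) care of checking that no extremum sits at a zero.
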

\begin{cor}
If $\lambda_i, \lambda_{i+1}$ are consecutive eigenvalues with eigenfunctions $f_i, f_{i+1}$ respectively, then for each pair of consecutive zeros of $f_i$, ocurring at $x_k$ and $x_{k+1}$, $f_{i+1}$ has exactly one zero in $[x_k, x_{k+1}]$.
\end{cor}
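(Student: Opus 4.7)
The plan is to combine a pigeonhole count with a small strengthening of the interlacing statement in Theorem~\ref{slanalog}(c).

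By Theorem~\ref{slanalog}(b), $f_i$ has exactly $i-1$ interior zeros, which together with the Dirichlet boundary zeros give the labeling $0 = x_0 < x_1 < \cdots < x_{i-1} < x_i = 1$ and partition $[0,1]$ into $i$ consecutive closed subintervals; likewise $f_{i+1}$ has exactly $i$ interior zeros in $(0,1)$. The goal becomes to show that each open subinterval $(x_k, x_{k+1})$ contains exactly one interior zero of $f_{i+1}$, which gives the ``exactly one zero in $[x_k,x_{k+1}]$'' conclusion once the shared Dirichlet endpoints at $x_0 = 0$ and $x_i = 1$ are accounted for.

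First I would upgrade the content of Theorem~\ref{slanalog}(c) from ``$f_{i+1}$ has a zero in $[x_k,x_{k+1}]$'' to ``$f_{i+1}$ has a zero in the open interval $(x_k,x_{k+1})$.'' Suppose for contradiction that $f_{i+1}$ has no zero in $(x_k,x_{k+1})$, and normalize signs so that $f_i, f_{i+1} > 0$ on that open interval. Running the Gauss-Green identity used in the proof of (c) yields
\begin{equation*}
(\lambda_{i+1}-\lambda_i)\int_{[x_k,x_{k+1}]} f_i f_{i+1}\, d\mu = f_{i+1}(x_k)\,\partial_n f_i(x_k) + f_{i+1}(x_{k+1})\,\partial_n f_i(x_{k+1}),
\end{equation*}
where the $f_i \,\partial_n f_{i+1}$ boundary terms drop out since $f_i(x_k) = f_i(x_{k+1}) = 0$. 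The left-hand side is strictly positive. On the right, continuity from the interior forces $f_{i+1}(x_k), f_{i+1}(x_{k+1}) \geq 0$, while $\partial_n f_i \leq 0$ at each endpoint because $f_i$ is positive inside and vanishes at the endpoint. This produces the desired contradiction. The argument is insensitive to whether $x_k$ or $x_{k+1}$ is a Dirichlet boundary point of $[0,1]$: if $f_{i+1}$ vanishes there, the corresponding boundary term is simply zero, so the right-hand side is still $\leq 0$.

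With the strengthened (c) in hand, the conclusion is a pigeonhole count. Let $c$ be the number of interior zeros of $f_{i+1}$ that coincide with some $x_\ell$ for $1 \leq \ell \leq i-1$. The remaining $i-c$ interior zeros of $f_{i+1}$ lie in the union of the $i$ open intervals $(x_k, x_{k+1})$, and by the strengthened (c) each of those intervals must contain at least one. Hence $i - c \geq i$, forcing $c = 0$ and exactly one interior zero of $f_{i+1}$ per open interval, as required. The main obstacle I anticipate is the boundary accounting in the Gauss-Green step, since the original proof of (c) implicitly assumed both endpoints of the interval were interior to $[0,1]$ and $f_{i+1}$ strictly positive there; the remedy above is simple but must be stated carefully so that the strict positivity of the left-hand side still yields a contradiction in the boundary cases.
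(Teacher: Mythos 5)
Your proof is correct and follows the route the paper intends: the corollary is stated in the paper with no separate proof, just the remark that it ``follows directly from Theorem \ref{slanalog},'' and the implicit argument is exactly your combination of the zero count in part (b) with the interlacing in part (c). The one substantive thing you add is the strengthening of (c) from the closed interval $[x_k,x_{k+1}]$ to the open interval $(x_k,x_{k+1})$, and this is a genuinely needed patch rather than a cosmetic one: with only the closed-interval version, a single zero of $f_{i+1}$ sitting at a shared endpoint $x_\ell$ could serve two adjacent intervals at once, and the bare count $i$ zeros versus $i$ intervals would not force ``exactly one'' per interval. Your rerun of the Gauss--Green identity handles this cleanly, since dropping the hypothesis $f_{i+1}>0$ at the endpoints to $f_{i+1}\geq 0$ still leaves the right-hand side nonpositive while the left-hand side stays strictly positive (using that consecutive eigenvalues are distinct, which the paper guarantees via the multiplicity-one statement in Proposition \ref{phiorder}). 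The disjointness of the open intervals then makes the pigeonhole argument airtight. In short: same approach as the paper, executed with more care at the endpoints than the paper's ``follows directly'' suggests is necessary.
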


In addition to this theorem, we can apply Proposition \ref{squishprop} to observe a pattern in the zeros and the values of the local extrema in certain eigenfunctions: 

\begin{cor}
If $g_i$ is the eigenfunction associated with the $i$th eigenvalue $\lambda_i$ on level $m-1$ and $x$ is one of its zeros, then for all $0\leq k\leq 3$, $F_k(x)$ is a zero of $f_{4i}$, the eigenfunction associated with the $(4i)$th eigenvalue $\lambda_{4i}$. The values of the local extrema of $f_{4i}$ are of form $\pm\left(\frac{p}{q}\right)^n \max(g_i(x))$.
\end{cor}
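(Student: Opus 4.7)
The plan is to identify $f_{4i}^{(m)}$, up to a nonzero scalar, with the eigenfunction obtained from $g_i^{(m-1)}$ by the squish construction of Proposition \ref{squishprop}, and then to read off both claims directly from the piecewise squish formula.

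\textbf{Step 1 (Identifying the position $4i$).} First I would prove by induction on $m$ that $\lambda_{4i}^{(m)} = \lambda_i^{(m-1)}$ for every admissible $i$. The base case $m = 2$ is immediate from Corollary \ref{intborn}: with $s = 3$, Proposition \ref{phiorder} places the three born values $2 - 2\sqrt{q},\,2,\,2 + 2\sqrt{q}$ at positions $s+1,\,2s+2,\,3s+3 = 4, 8, 12$, matching $\lambda_1^{(1)},\lambda_2^{(1)},\lambda_3^{(1)}$. For the inductive step, I would apply Proposition \ref{phiorder} at level $m$: with $s = 4^{m-1}-1$, the position $4i$ either coincides with one of the three born positions $s+1,\,2s+2,\,3s+3$ or falls into one of the four $\Phi_j$-blocks. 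The born cases correspond exactly to $i \in \{4^{m-2},\,2 \cdot 4^{m-2},\,3 \cdot 4^{m-2}\}$, which are themselves the born positions on level $m-1$, so $\lambda_{4i}^{(m)}$ and $\lambda_i^{(m-1)}$ are both equal to the same born value. Inside each $\Phi_j$-block, a short index identity (for example $4i = 4^{m-1} + 4k$ in the $\Phi_2$-block yields $i = 4^{m-2}+k$) combined with the inductive hypothesis $\lambda_{4k}^{(m-1)} = \lambda_k^{(m-2)}$ and a second application of Proposition \ref{phiorder} at level $m-1$ reduces the required identity to $\Phi_j(\lambda_k^{(m-2)}) = \Phi_j(\lambda_k^{(m-2)})$.

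\textbf{Step 2 (Identifying $f_{4i}^{(m)}$).} Applying Proposition \ref{squishprop} to $g_i^{(m-1)}$ produces an eigenfunction $h$ on level $m$ whose unscaled eigenvalue is $\lambda_i^{(m-1)}$, which by Step 1 equals $\lambda_{4i}^{(m)}$. Since Proposition \ref{phiorder} guarantees that every eigenvalue on level $m$ has multiplicity one, $h$ must agree with $f_{4i}^{(m)}$ up to a nonzero scalar. Normalizing, we take $h = f_{4i}^{(m)}$.

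\textbf{Step 3 (Reading off zeros and extrema).} On each subinterval $F_k(I)$ the squish formula gives $f_{4i}^{(m)}(x) = c_k\,g_i^{(m-1)}(F_k^{-1}(x))$ with $c_k \in \{\pm 1,\,\pm p/q\}$ nonzero and determined by the symmetry type of $g_i^{(m-1)}$ about $1/2$. Therefore $f_{4i}^{(m)}(F_k(y)) = c_k\,g_i^{(m-1)}(y) = 0$ whenever $g_i^{(m-1)}(y) = 0$, which is the first claim; the interior junction points $k/4$ are the images of the Dirichlet zeros $y = 0, 1$ of $g_i^{(m-1)}$ under the appropriate $F_k$. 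For the second claim, each local extremum of $f_{4i}^{(m)}$ occurs at some $F_k(x)$ where $x$ is a local extremum of $g_i^{(m-1)}$, with value $c_k\,g_i^{(m-1)}(x)$; since $|c_k| \in \{(p/q)^0,\,(p/q)^1\}$, this value has the asserted form $\pm (p/q)^n$ times an extremum value of $g_i^{(m-1)}$.

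The hard part will be the combinatorial bookkeeping in Step 1; once the identification $\lambda_{4i}^{(m)} = \lambda_i^{(m-1)}$ is established, Steps 2 and 3 follow immediately from the squish formula and the simplicity of the level-$m$ spectrum.
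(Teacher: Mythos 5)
Your proposal is correct and follows exactly the route the paper intends: the paper states this corollary as a direct consequence of Proposition \ref{squishprop} without supplying details, and your Step 1 (locating the squished eigenfunction at index $4i$ via the ordering and simplicity of the spectrum in Proposition \ref{phiorder}) together with Steps 2--3 fills in precisely the bookkeeping the paper leaves implicit. The only caveat is cosmetic: what you actually establish is that each local extremum of $f_{4i}$ is $\pm(p/q)^{0\text{ or }1}$ times a local extremum value of $g_i$ (with the junction points $k/4$ excluded as extrema since $f_{4i}$ vanishes there), which is the provable reading of the corollary's informally stated $\pm(p/q)^n\max(g_i(x))$.
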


This gives us a nice description of the eigenfunctions, especially those of the form $\lambda_{4^n}$ and $2(4^n)$th eigenvalues $\lambda_{2(4^n)}$. The zeros of the $1$st and $2$nd eigenfunctions occur at $\{0, 1\}$ and $\{0, \frac{1}{2}, 1\}$ respectively, for all $p$, and are spaced evenly. This means that all eigenfunctions $f_{4^n}$ and $f_{2(4^n)}$ will have evenly spaced zeros for all $p$. See Figures \ref{fig:intervaleigfunction4} and \ref{fig:intervaleigfunction6} for the first 16 eigenfunctions for $p=0.1$ and $p=0.9$ with the zeros identified.

\section{Laplacians on the Sierpinski Gasket} \label{sglap}
In the standard theory, the Sierpinski Gasket is defined by an IFS consisting of three contractive mappings, with uniform measure throughout the cells and uniform resistance throughout the edges. As in the Interval case before, a larger set of symmetric, self-similar Laplacians can be generated through a modified IFS, defined
\begin{equation}
\{F_{jk} \mid F_{jk} = F_j\circ F_k\}
\end{equation}
where $F_i$ is a standard contractive mapping for the Sierpinski Gasket. Then, the Sierpinski Gasket can be equivalently defined
\begin{equation}
SG = \bigcup_{0\leq j, k\leq 2}F_{jk}(SG)
\end{equation}

This new IFS allows us to define a non-uniform, self-similar, symmetric measure for SG. Note that $F_{jk}(SG)$ gives the outer cells if $j=k$, the inner cells if $j \neq k$. In order to maintain symmetry, we must define the measure so that $\mu(F_{ii}(SG)) = \mu(F_{jj}(SG))$ for all $i, j$, and $\mu(F_{jk}) = \mu(F_{ih})$ for all $j\neq k$, $i\neq h$. Without loss of generality, we will set $\mu(SG) = 1$, meaning that if $\mu_0$ denotes the measure of an outer cell and $\mu_1$ denotes the measure of an inner cell,
\begin{equation}
3\mu_0 + 6\mu_1 = 1
\end{equation}
leaving us only one free measure parameter to vary. 

In order to compute the measure of $A = F_{j_1k_1}\circ F_{j_2k_2}\circ ... \circ F_{j_mk_m}(SG)$, an $m$-cell of SG, define $i(A)$ to be the number of $j_a$ such that $j_a=k_a$, i.e. the number of ``outer" mappings needed to obtain $A$. The number of ``inner" mappings needed is $m-i(A)$. Then,
\begin{equation}
\mu(A) = \mu_0^{i(A)}\mu_1^{m-i(A)} = \mu_0^{i(A)}\left(\frac{1-3\mu_0}{6}\right)^{m-i(A)}
\end{equation}

In addition to a non-uniform measure, we can also define a non-uniform resistance. In order to maintain symmetry, we must define the resistance of the edges of the outer cells to be equal and the same for the inner cells. Call the resistance of the outer cells $r_0$ and the inner cells $r_1$. We want to compute the resistances of the edges so that the resulting effective resistances between points in $V_0$ are equal; call this effective resistance $\rho$. Let $r_1=1$, $r=r_0$, and we will multiply by a constant at the end. The $\Delta-Y$ transforms shown in Figures \ref{fig:delta1} and \ref{fig:delta2} show that
\begin{align}
\rho &= \frac{9r^2+26r+15}{6(r+2)} \\
r_0 &= \frac{6r(r+2)}{9r^2+26r+15} \\
r_1 &= \frac{6(r+2)}{9r^2+26r+15}
\end{align}
and so there is only one free resistance parameter, $r = \frac{r_0}{r_1}$, to vary.

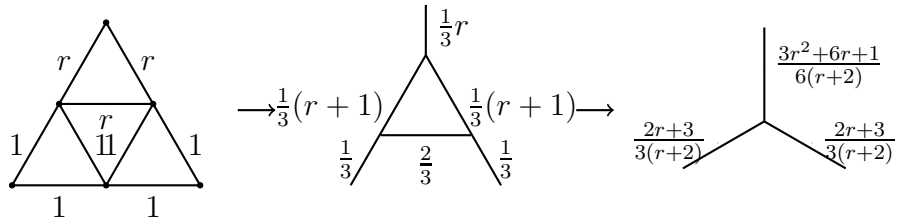
\begin{figure}
\centering
\begin{tikzpicture}

\begin{scope}[scale=0.5]
\draw[black,thick] (0,0) -- (5,0);
\draw[black,thick] (0,0) -- (2.5,4.3301);
\draw[black,thick] (5,0) -- (2.5,4.3301);
\draw[black,thick] (1.25,2.1651) -- (3.75,2.1651);
\draw[black,thick] (1.25,2.1651) -- (2.5,0);
\draw[black,thick] (3.75,2.1651) -- (2.5,0);

\filldraw[black] (1.25,0) circle (0pt) node[anchor=north] {$1$};
\filldraw[black] (3.75,0) circle (0pt) node[anchor=north] {$1$};
\filldraw[black] (2.5,2.15) circle (0pt) node[anchor=north] {$r$};
\filldraw[black] (0.625,1.0825) circle (0pt) node[anchor=east] {$1$};
\filldraw[black] (1.875,3.2475) circle (0pt) node[anchor=east] {$r$};
\filldraw[black] (3.125,1.0825) circle (0pt) node[anchor=east] {$1$};
\filldraw[black] (1.875,1.0825) circle (0pt) node[anchor=west] {$1$};
\filldraw[black] (4.375,1.0825) circle (0pt) node[anchor=west] {$1$};
\filldraw[black] (3.125,3.2475) circle (0pt) node[anchor=west] {$r$};

\filldraw[black] (0,0) circle (2pt) node[anchor=east] {};
\filldraw[black] (2.5,4.3301) circle (2pt) node[anchor=east] {};
\filldraw[black] (5,0) circle (2pt) node[anchor=north] {};
\filldraw[black] (3.75,2.1651) circle (2pt) node[anchor=west] {};
\filldraw[black] (1.25,2.1651) circle (2pt) node[anchor=east] {};
\filldraw[black] (2.5,0) circle (2pt) node[anchor=north] {};
\end{scope}

\draw[->,black,thick] (3,1) -- (3.5,1);

\begin{scope}[xshift=4.5cm,scale=0.2]

\draw[black,thick] (2,3.333) -- (8,3.333);
\draw[black,thick] (5,8.6603) -- (0,0);
\draw[black,thick] (10,0) -- (5,8.6603);
\draw[black,thick] (5,12) -- (5,8.6603);

\filldraw[black] (1,1.7321) circle (0pt) node[anchor=east]{$\frac{1}{3}$};
\filldraw[black] (9,1.7321) circle (0pt) node[anchor=west]{$\frac{1}{3}$};
\filldraw[black] (5,10.5) circle (0pt) node[anchor=west]{$\frac{1}{3}r$};
\filldraw[black] (3,5.1962) circle (0pt) node[anchor=east]{$\frac{1}{3}(r+1)$};
\filldraw[black] (7,5.1962) circle (0pt) node[anchor=west]{$\frac{1}{3}(r+1)$};
\filldraw[black] (5,3.333) circle (0pt) node[anchor=north]{$\frac{2}{3}$};

\end{scope}

\draw[->,black,thick] (7.5,1) -- (8,1);

\begin{scope}[xshift = 10cm,yshift =0.85cm,scale=0.25]

\draw[black,thick] (0,0) -- (0,5);
\draw[black,thick] (0,0) -- (-4.3301,-2.5);
\draw[black,thick] (0,0) -- (4.3301,-2.5);

\filldraw[black] (0,3) circle (0pt) node[anchor=west]{$\frac{3r^{2}+6r+1}{6(r+2)}$};
\filldraw[black] (-2.5,-1.15) circle (0pt) node[anchor=east]{$\frac{2r+3}{3(r+2)}$};
\filldraw[black] (2.5,-1.15) circle (0pt) node[anchor=west]{$\frac{2r+3}{3(r+2)}$};

\end{scope}

\end{tikzpicture}

\caption{The transformation on $\frac{1}{3}$ of the total gasket. Each arrow denotes a $\Delta - Y$ transform.} \label{fig:delta1}
\end{figure}

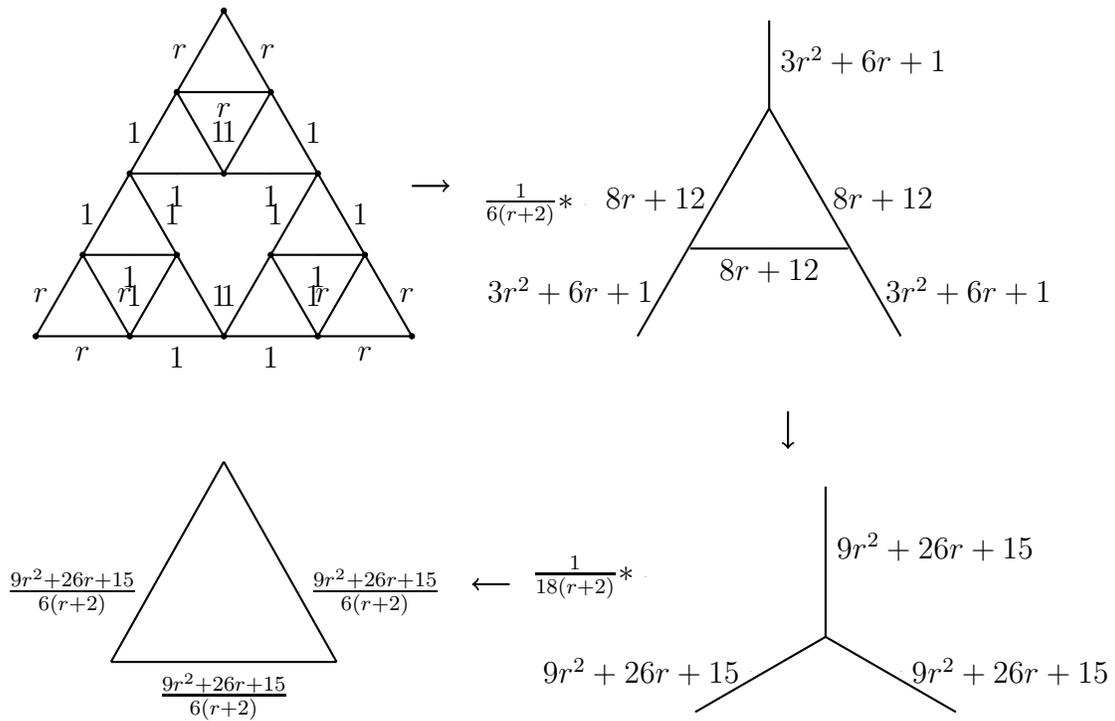
\begin{figure}
\centering
\begin{tikzpicture}

\begin{scope}[scale=0.5]
\draw[black,thick] (0,0) -- (10,0);
\draw[black,thick] (0,0) -- (5,8.6603);
\draw[black,thick] (5,8.6603) -- (10,0);
\draw[black,thick] (1.25,2.1651) -- (3.75,2.1651);
\draw[black,thick] (2.5,4.3301) -- (7.5,4.3301);
\draw[black,thick] (3.75,6.4952) -- (6.25,6.4952);
\draw[black,thick] (3.75,2.1651) -- (2.5,0);
\draw[black,thick] (7.5,4.3301) -- (5,0);
\draw[black,thick] (8.75,2.1651) -- (7.5,0);
\draw[black,thick] (1.25,2.1651) -- (2.5,0);
\draw[black,thick] (2.5,4.3301) -- (5,0);
\draw[black,thick] (3.75,6.4952) -- (5,4.3301);
\draw[black,thick] (6.25,6.4952) -- (5,4.3301);
\draw[black,thick] (6.25,2.1651) -- (8.75,2.1651);
\draw[black,thick] (6.25,2.1651) -- (7.5,0);

\filldraw[black] (1.25,0) circle (0pt) node[anchor=north] {$r$};
\filldraw[black] (3.75,0) circle (0pt) node[anchor=north] {$1$};
\filldraw[black] (6.25,0) circle (0pt) node[anchor=north] {$1$};
\filldraw[black] (8.75,0) circle (0pt) node[anchor=north] {$r$};
\filldraw[black] (2.5,2.15) circle (0pt) node[anchor=north] {$1$};
\filldraw[black] (7.5,2.15) circle (0pt) node[anchor=north] {$1$};
\filldraw[black] (3.75,4.3001) circle (0pt) node[anchor=north] {$1$};
\filldraw[black] (6.25,4.3001) circle (0pt) node[anchor=north] {$1$};
\filldraw[black] (5,6.4952) circle (0pt) node[anchor=north] {$r$};
\filldraw[black] (0.625,1.0825) circle (0pt) node[anchor=east] {$r$};
\filldraw[black] (1.875,3.2475) circle (0pt) node[anchor=east] {$1$};
\filldraw[black] (3.125,5.41) circle (0pt) node[anchor=east] {$1$};
\filldraw[black] (4.325,7.58) circle (0pt) node[anchor=east] {$r$};
\filldraw[black] (9.375,1.0825) circle (0pt) node[anchor=west] {$r$};
\filldraw[black] (8.125,3.2475) circle (0pt) node[anchor=west] {$1$};
\filldraw[black] (6.875,5.41) circle (0pt) node[anchor=west] {$1$};
\filldraw[black] (5.675,7.58) circle (0pt) node[anchor=west] {$r$};
\filldraw[black] (4.35,5.45) circle (0pt) node[anchor=west] {$1$};
\filldraw[black] (5.65,5.45) circle (0pt) node[anchor=east] {$1$};
\filldraw[black] (3.125,1.0825) circle (0pt) node[anchor=east] {$1$};
\filldraw[black] (1.875,1.0825) circle (0pt) node[anchor=west] {$r$};
\filldraw[black] (4.375,1.0825) circle (0pt) node[anchor=west] {$1$};
\filldraw[black] (3.125,3.2475) circle (0pt) node[anchor=west] {$1$};
\filldraw[black] (5.625,1.0825) circle (0pt) node[anchor=east] {$1$};
\filldraw[black] (6.8750,1.0825) circle (0pt) node[anchor=west] {$1$};
\filldraw[black] (8.125,1.0825) circle (0pt) node[anchor=east] {$r$};
\filldraw[black] (6.875,3.2475) circle (0pt) node[anchor=east] {$1$};

\filldraw[black] (0,0) circle (2pt) node[anchor=east] {};
\filldraw[black] (10,0) circle (2pt) node[anchor=west] {};
\filldraw[black] (5,8.6603) circle (2pt) node[anchor=south] {};
\filldraw[black] (2.5,4.3301) circle (2pt) node[anchor=east] {};
\filldraw[black] (7.5,4.3301) circle (2pt) node[anchor=west] {};
\filldraw[black] (5,0) circle (2pt) node[anchor=north] {};
\filldraw[black] (6.25,6.4952) circle (2pt) node[anchor=west] {};
\filldraw[black] (8.75,2.1651) circle (2pt) node[anchor=west] {};
\filldraw[black] (3.75,2.1651) circle (2pt) node[anchor=west] {};
\filldraw[black] (1.25,2.1651) circle (2pt) node[anchor=east] {};
\filldraw[black] (6.25,2.1651) circle (2pt) node[anchor=east] {};
\filldraw[black] (3.75,6.4952) circle (2pt) node[anchor=east] {};
\filldraw[black] (5,4.3301) circle (2pt) node[anchor=north] {};
\filldraw[black] (2.5,0) circle (2pt) node[anchor=north] {};
\filldraw[black] (7.5,0) circle (2pt) node[anchor=north] {};
\end{scope}

\draw[->,black,thick] (5,2) -- (5.5,2);

\begin{scope}[xshift = 8 cm,scale=0.35]

\draw[black,thick] (2,3.333) -- (8,3.333);
\draw[black,thick] (5,8.6603) -- (0,0);
\draw[black,thick] (10,0) -- (5,8.6603);
\draw[black,thick] (5,12) -- (5,8.6603);

\filldraw[black] (-2,5) circle (0pt) node[anchor=east]{$\frac{1}{6(r+2)}*$};
\filldraw[black] (1,1.7321) circle (0pt) node[anchor=east]{$3r^{2}+6r+1$};
\filldraw[black] (9,1.7321) circle (0pt) node[anchor=west]{$3r^{2}+6r+1$};
\filldraw[black] (5,10.5) circle (0pt) node[anchor=west]{$3r^{2}+6r+1$};
\filldraw[black] (3,5.1962) circle (0pt) node[anchor=east]{$8r+12$};
\filldraw[black] (7,5.1962) circle (0pt) node[anchor=west]{$8r+12$};
\filldraw[black] (5,3.333) circle (0pt) node[anchor=north]{$8r+12$};

\end{scope}

\begin{scope}[xshift = 10.5 cm, yshift = -4 cm,scale=0.4]

\draw[black,thick] (0,0) -- (0,5);
\draw[black,thick] (0,0) -- (-4.3301,-2.5);
\draw[black,thick] (0,0) -- (4.3301,-2.5);

\filldraw[black] (-6,2) circle (0pt) node[anchor=east]{$\frac{1}{18(r+2)}*$};
\filldraw[black] (0,3) circle (0pt) node[anchor=west]{$9r^{2}+26r+15$};
\filldraw[black] (-2.5,-1.15) circle (0pt) node[anchor=east]{$9r^{2}+26r+15$};
\filldraw[black] (2.5,-1.15) circle (0pt) node[anchor=west]{$9r^{2}+26r+15$};

\end{scope}

\draw[->,black,thick] (10,-1) -- (10,-1.5);
\draw[->,black,thick] (6.3,-3.3) -- (5.8,-3.3);

\begin{scope}[yshift = -6 cm,scale=0.5]

\draw[black,thick] (2,3.333) -- (8,3.333);
\draw[black,thick] (5,8.6603) -- (2,3.333);
\draw[black,thick] (8,3.333) -- (5,8.6603);

\filldraw[black] (3,5.1962) circle (0pt) node[anchor=east]{$\frac{9r^{2}+26r+15}{6(r+2)}$};
\filldraw[black] (7,5.1962) circle (0pt) node[anchor=west]{$\frac{9r^{2}+26r+15}{6(r+2)}$};
\filldraw[black] (5,3.333) circle (0pt) node[anchor=north]{$\frac{9r^{2}+26r+15}{6(r+2)}$};

\end{scope}

\end{tikzpicture}

\caption{The transformation of the entire gasket, using Figure \ref{fig:delta1} in Step 1.} \label{fig:delta2}
\end{figure}

We can compute the conductance and resistance in a similar way as the measure. Consider the $m$-cell $A = F_{j_1k_1}\circ F_{j_2k_2}\circ ... \circ F_{j_mk_m}(SG)$. Then for $x, y\in V_0$,
\begin{equation}
c(F_{j_1k_1}\circ ... \circ F_{j_mk_m}(x), F_{j_1k_1}\circ ... \circ F_{j_mk_m}(y)) = \frac{1}{r_0^{i(A)}}\left(\frac{1}{r_1^{m-i(A)}}\right)c(x, y)
\end{equation}

\begin{figure}
\centering
\begin{tiny}
\begin{tikzpicture}[scale=0.55]
\draw[black,thick] (0,0) -- (10,0);
\draw[black,thick] (0,0) -- (5,8.6603);
\draw[black,thick] (5,8.6603) -- (10,0);
\draw[black,thick] (1.25,2.1651) -- (3.75,2.1651);
\draw[black,thick] (2.5,4.3301) -- (7.5,4.3301);
\draw[black,thick] (3.75,6.4952) -- (6.25,6.4952);
\draw[black,thick] (3.75,2.1651) -- (2.5,0);
\draw[black,thick] (7.5,4.3301) -- (5,0);
\draw[black,thick] (8.75,2.1651) -- (7.5,0);
\draw[black,thick] (1.25,2.1651) -- (2.5,0);
\draw[black,thick] (2.5,4.3301) -- (5,0);
\draw[black,thick] (3.75,6.4952) -- (5,4.3301);
\draw[black,thick] (6.25,6.4952) -- (5,4.3301);
\draw[black,thick] (6.25,2.1651) -- (8.75,2.1651);
\draw[black,thick] (6.25,2.1651) -- (7.5,0);

\filldraw[black] (5,7) circle (0pt) node[anchor=south] {$F_{0}F_{0}K$};
\filldraw[black] (8.75,0.5) circle (0pt) node[anchor=south] {$F_{1}F_{1}K$};
\filldraw[black] (1.25,0.5) circle (0pt) node[anchor=south] {$F_{2}F_{2}K$};
\filldraw[black] (6.25,4.8) circle (0pt) node[anchor=south] {$F_{0}F_{1}K$};
\filldraw[black] (3.75,4.8) circle (0pt) node[anchor=south] {$F_{0}F_{2}K$};
\filldraw[black] (7.5,2.7) circle (0pt) node[anchor=south] {$F_{1}F_{0}K$};
\filldraw[black] (2.5,2.7) circle (0pt) node[anchor=south] {$F_{2}F_{0}K$};
\filldraw[black] (6.25,0.5) circle (0pt) node[anchor=south] {$F_{1}F_{2}K$};
\filldraw[black] (3.75,0.5) circle (0pt) node[anchor=south] {$F_{2}F_{1}K$};

\filldraw[black] (0,0) circle (2pt) node[anchor=east] {$q_{2}$};
\filldraw[black] (10,0) circle (2pt) node[anchor=west] {$q_{1}$};
\filldraw[black] (5,8.6603) circle (2pt) node[anchor=south] {$q_{0}$};
\filldraw[black] (2.5,4.3301) circle (2pt) node[anchor=east] {};
\filldraw[black] (7.5,4.3301) circle (2pt) node[anchor=west] {};
\filldraw[black] (5,0) circle (2pt) node[anchor=north] {};
\filldraw[black] (6.25,6.4952) circle (2pt) node[anchor=west] {};
\filldraw[black] (8.75,2.1651) circle (2pt) node[anchor=west] {};
\filldraw[black] (3.75,2.1651) circle (2pt) node[anchor=west] {};
\filldraw[black] (1.25,2.1651) circle (2pt) node[anchor=east] {};
\filldraw[black] (6.25,2.1651) circle (2pt) node[anchor=east] {};
\filldraw[black] (3.75,6.4952) circle (2pt) node[anchor=east] {};
\filldraw[black] (5,4.3301) circle (2pt) node[anchor=north] {};
\filldraw[black] (2.5,0) circle (2pt) node[anchor=north] {};
\filldraw[black] (7.5,0) circle (2pt) node[anchor=north] {};

\end{tikzpicture}
\begin{tikzpicture}[scale = 0.55]
\draw[black,thick] (0,0) -- (10,0);
\draw[black,thick] (0,0) -- (5,8.6603);
\draw[black,thick] (5,8.6603) -- (10,0);
\draw[black,thick] (1.25,2.1651) -- (3.75,2.1651);
\draw[black,thick] (2.5,4.3301) -- (7.5,4.3301);
\draw[black,thick] (3.75,6.4952) -- (6.25,6.4952);
\draw[black,thick] (3.75,2.1651) -- (2.5,0);
\draw[black,thick] (7.5,4.3301) -- (5,0);
\draw[black,thick] (8.75,2.1651) -- (7.5,0);
\draw[black,thick] (1.25,2.1651) -- (2.5,0);
\draw[black,thick] (2.5,4.3301) -- (5,0);
\draw[black,thick] (3.75,6.4952) -- (5,4.3301);
\draw[black,thick] (6.25,6.4952) -- (5,4.3301);
\draw[black,thick] (6.25,2.1651) -- (8.75,2.1651);
\draw[black,thick] (6.25,2.1651) -- (7.5,0);

\filldraw[black] (5,7) circle (0pt) node[anchor=south] {$\mu_{0}$};
\filldraw[black] (8.75,0.5) circle (0pt) node[anchor=south] {$\mu_{0}$};
\filldraw[black] (1.25,0.5) circle (0pt) node[anchor=south] {$\mu_{0}$};
\filldraw[black] (6.25,4.8) circle (0pt) node[anchor=south] {$\mu_{1}$};
\filldraw[black] (3.75,4.8) circle (0pt) node[anchor=south] {$\mu_{1}$};
\filldraw[black] (7.5,2.7) circle (0pt) node[anchor=south] {$\mu_{1}$};
\filldraw[black] (2.5,2.7) circle (0pt) node[anchor=south] {$\mu_{1}$};
\filldraw[black] (6.25,0.5) circle (0pt) node[anchor=south] {$\mu_{1}$};
\filldraw[black] (3.75,0.5) circle (0pt) node[anchor=south] {$\mu_{1}$};

\filldraw[black] (1.25,0) circle (0pt) node[anchor=north] {$r_{0}$};
\filldraw[black] (3.75,0) circle (0pt) node[anchor=north] {$r_{1}$};
\filldraw[black] (6.25,0) circle (0pt) node[anchor=north] {$r_{1}$};
\filldraw[black] (8.75,0) circle (0pt) node[anchor=north] {$r_{0}$};
\filldraw[black] (2.5,2.15) circle (0pt) node[anchor=north] {$r_{1}$};
\filldraw[black] (7.5,2.15) circle (0pt) node[anchor=north] {$r_{1}$};
\filldraw[black] (3.75,4.3001) circle (0pt) node[anchor=north] {$r_{1}$};
\filldraw[black] (6.25,4.3001) circle (0pt) node[anchor=north] {$r_{1}$};
\filldraw[black] (5,6.4952) circle (0pt) node[anchor=north] {$r_{0}$};
\filldraw[black] (0.625,1.0825) circle (0pt) node[anchor=east] {$r_{0}$};
\filldraw[black] (1.875,3.2475) circle (0pt) node[anchor=east] {$r_{1}$};
\filldraw[black] (3.125,5.41) circle (0pt) node[anchor=east] {$r_{1}$};
\filldraw[black] (4.325,7.58) circle (0pt) node[anchor=east] {$r_{0}$};
\filldraw[black] (9.375,1.0825) circle (0pt) node[anchor=west] {$r_{0}$};
\filldraw[black] (8.125,3.2475) circle (0pt) node[anchor=west] {$r_{1}$};
\filldraw[black] (6.875,5.41) circle (0pt) node[anchor=west] {$r_{1}$};
\filldraw[black] (5.675,7.58) circle (0pt) node[anchor=west] {$r_{0}$};
\filldraw[black] (4.35,5.45) circle (0pt) node[anchor=west] {$r_{1}$};
\filldraw[black] (5.65,5.45) circle (0pt) node[anchor=east] {$r_{1}$};
\filldraw[black] (3.125,1.0825) circle (0pt) node[anchor=east] {$r_{1}$};
\filldraw[black] (1.875,1.0825) circle (0pt) node[anchor=west] {$r_{0}$};
\filldraw[black] (4.375,1.0825) circle (0pt) node[anchor=west] {$r_{1}$};
\filldraw[black] (3.125,3.2475) circle (0pt) node[anchor=west] {$r_{1}$};
\filldraw[black] (5.625,1.0825) circle (0pt) node[anchor=east] {$r_{1}$};
\filldraw[black] (6.8750,1.0825) circle (0pt) node[anchor=west] {$r_{1}$};
\filldraw[black] (8.125,1.0825) circle (0pt) node[anchor=east] {$r_{0}$};
\filldraw[black] (6.875,3.2475) circle (0pt) node[anchor=east] {$r_{1}$};

\filldraw[black] (0,0) circle (2pt) node[anchor=east] {};
\filldraw[black] (10,0) circle (2pt) node[anchor=west] {};
\filldraw[black] (5,8.6603) circle (2pt) node[anchor=south] {};
\filldraw[black] (2.5,4.3301) circle (2pt) node[anchor=east] {};
\filldraw[black] (7.5,4.3301) circle (2pt) node[anchor=west] {};
\filldraw[black] (5,0) circle (2pt) node[anchor=north] {};
\filldraw[black] (6.25,6.4952) circle (2pt) node[anchor=west] {};
\filldraw[black] (8.75,2.1651) circle (2pt) node[anchor=west] {};
\filldraw[black] (3.75,2.1651) circle (2pt) node[anchor=west] {};
\filldraw[black] (1.25,2.1651) circle (2pt) node[anchor=east] {};
\filldraw[black] (6.25,2.1651) circle (2pt) node[anchor=east] {};
\filldraw[black] (3.75,6.4952) circle (2pt) node[anchor=east] {};
\filldraw[black] (5,4.3301) circle (2pt) node[anchor=north] {};
\filldraw[black] (2.5,0) circle (2pt) node[anchor=north] {};
\filldraw[black] (7.5,0) circle (2pt) node[anchor=north] {};

\end{tikzpicture}

\end{tiny}
\caption{Left: Construction of the twice-iterated SG Right: Assignment of measure and resistance} \label{fig:twiceitsg}
\end{figure}
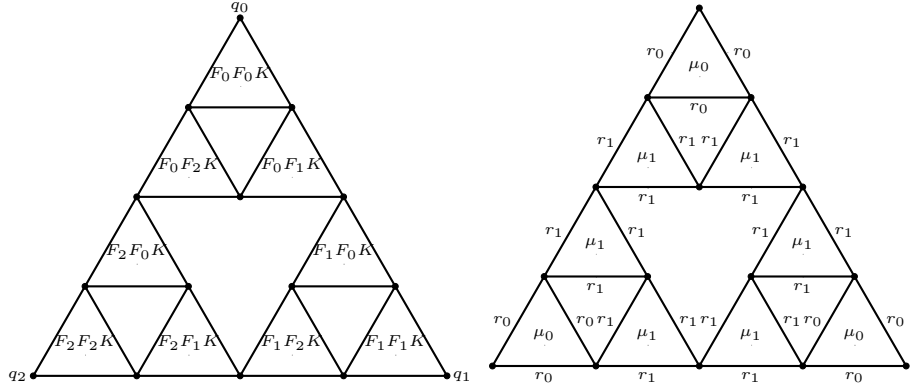

The measure and resistance distributions, along with a construction of the twice iterated SG, are shown in Figure \ref{fig:twiceitsg}. With the definition of conductance, we may define energy
\begin{align}
\mathcal{E}_m(u) &= \sum_{x\sim y} c^{(m)}(x, y)|u(x) - u(y)|^2 \\
\mathcal{E}(u) &= \lim_{m\to\infty} \mathcal{E}_m(u)
\end{align}
and a weak formulation of the Laplacian
\begin{equation}
\mathcal{E}(u, v) = -\int fv d\mu 
\end{equation}
where $u, v \in \text{dom}(\mathcal{E})$ and $f = \Delta^{(r)} u$, the Laplacian with parameter $r$. As usual, the pointwise formula for the Laplacian can be defined
\begin{align}  \label{pointwiselaplacian}
-\Delta_m^{(r)} u(x) &= \frac{1}{\int \Psi_m^{(x)} d\mu} \sum_{x\sim y} c^{(m)}(x, y) (u(x) - u(y)) && x\in V_m\setminus V_0\\ 
-\Delta^{(r)} u(x) &= \lim_{m\to\infty} -\Delta_m^{(r)} u(x) && x\in V_m\setminus V_0
\end{align}
where $\Psi_m^{(x)}(y) = \delta_{xy}$ for $y \in V_m$ and is piecewise harmonic in the complement of $V_{m}$. If the choice of $r$ is clear, we will abbreviate $\Delta_m^{(r)}$ and $\Delta^{(r)}$ to $\Delta_m$ or $\Delta$.

We can compute $\int \Psi_m^{(x)}d\mu$ (the \textit{pointmass} of $x$) explicitly. $\Psi_m^{(x)}$ has support in the two neighboring $m$-cells of $x$. Call these two cells $A$ and $B$. If $x_1, x_2, x_3 \in A$, then $\sum_i\int \Psi_m^{(x_i)}d\mu = \int_A 1 d\mu$. By symmetry, $\int \Psi_m^{(x_i)}d\mu$ are all equal. The same applies to the integrals for $B$. So 
\begin{equation}
\int \Psi_m^{(x)}d\mu = \frac{1}{3}(\mu(A) + \mu(B)) = \frac{1}{3}(\mu_0^{i(A)}\mu_1^{m-i(A)} + \mu_0^{i(B)}\mu_1^{m-i(B)})
\end{equation}

As in the standard case, our Laplacian is self-similar with the following identity
\begin{align}
-\Delta (u\circ F_{jj}) &= \frac{1}{r_0\mu_0} (-\Delta u)\circ F_{jj} \\
-\Delta (u\circ F_{jk}) &= \frac{1}{r_1\mu_1} (-\Delta u)\circ F_{jk} && j\neq k
\end{align} 

As on the interval, we will require that the renormalization factor is constant for any choice of contraction mappings we choose, i.e. $r_0\mu_0 = r_1\mu_1$. This means that there is only one choice of parameter that determines both the measure and the resistance. We will use $r=\frac{r_0}{r_1}$ as this parameter throughout this paper.

The renormalization factor $r_0\mu_0 = r_1\mu_1$ can be defined in terms of $r$. From now on, we will denote
\begin{equation}
\mu_{0}(r)r_{0}(r)=L(r) = \frac{2r(r+2)}{(2r+1)(9r^{2}+26r+15)}
\end{equation}
as the renormalization factor of $\Delta^{(r)}$. One interesting observation is that $L(r)$ has one global maximum at $r_{max}\approx 0.641677$ (and no other local extrema on $(0, \infty)$) which can be solved for analytically, but we will omit the calculations. This means that for all $r \neq r_{max}$, there exists exactly one $r' \neq r$ such that $L(r) = L(r')$. However, we did not find any significant properties relating $r'$ to $r$.

\section{Spectral Decimation on the Sierpinski Gasket}

We now seek to replicate the analysis performed in section \ref{intdec} on the more complicated structure of the Sierpinski Gasket. By lemma \ref{i(k)lemma}, which extends directly to SG, we can simplify the pointwise Laplacian formula \ref{pointwiselaplacian} based on the value of $i$ on adjacent cells. Letting $A_{0}$ and $A_{1}$ be $m$-cells with junction point $x$, and other vertices $y_{0},y_{1}$ and $y_{2},y_{3}$, respectively. Then the pointwise Laplacian of a function $f$ at $x$ can be written as

\begin{footnotesize}
\begin{align}
-\Delta_m f(x) &= \left(\frac{1}{\mu_{0}r_{0}}\right)^m \frac{3}{2}( (2f(x)-f(y_{0})-f(y_{1}))+(2f(x)-f(y_{2})-f(y_{3}))) &&\text{if }i(A_{0}) = i(A_{1}) \\
-\Delta_m f(x) &= \left(\frac{1}{\mu_{0}r_{0}}\right)^m\left(\frac{3}{2}\right) \frac{2}{\mu_{0}+\mu_{1}}(\mu_{0}( 2f(x)-f(y_{0})-f(y_{1}))+\mu_{1}(2f(x)-f(y_{2})-f(y_{3}))) &&\text{if } i(A_{0}) = i(A_{1})+1 \\
-\Delta_m f(x) &= \left(\frac{1}{\mu_{0}r_{0}}\right)^m\left(\frac{3}{2}\right) \frac{2}{\mu_{0}+\mu_{1}}(\mu_{1} (2f(x)-f(y_{0})-f(y_{1}))+\mu_{0}(2f(x)-f(y_{2})-f(y_{3}))) &&\text{if } i(A_{0}) = i(A_{1})-1
\end{align}
\end{footnotesize}

For now we will omit the renormalization factor $\left(\frac{1}{\mu_{0}r_{0}}\right)^m$ and later rescale our eigenvalues by this constant term. To develop an algorithm to extend an eigenfunction on $V_{m}$ with eigenvalue $\lambda_{m}$ to an eigenfunction on $V_{m+1}$ with new eigenvalue $\lambda_{m+1}$ we formulate an analagous system to \cite{Strichartz}. 

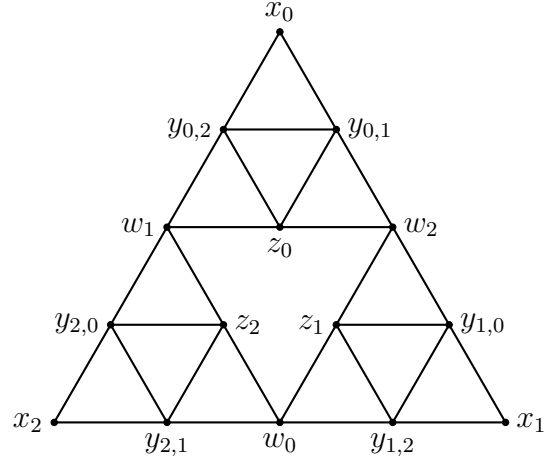
\begin{figure}
\centering
\begin{tikzpicture}[scale=0.6]
\draw[black,thick] (0,0) -- (10,0);
\draw[black,thick] (0,0) -- (5,8.6603);
\draw[black,thick] (5,8.6603) -- (10,0);
\draw[black,thick] (1.25,2.1651) -- (3.75,2.1651);
\draw[black,thick] (2.5,4.3301) -- (7.5,4.3301);
\draw[black,thick] (3.75,6.4952) -- (6.25,6.4952);
\draw[black,thick] (3.75,2.1651) -- (2.5,0);
\draw[black,thick] (7.5,4.3301) -- (5,0);
\draw[black,thick] (8.75,2.1651) -- (7.5,0);
\draw[black,thick] (1.25,2.1651) -- (2.5,0);
\draw[black,thick] (2.5,4.3301) -- (5,0);
\draw[black,thick] (3.75,6.4952) -- (5,4.3301);
\draw[black,thick] (6.25,6.4952) -- (5,4.3301);
\draw[black,thick] (6.25,2.1651) -- (8.75,2.1651);
\draw[black,thick] (6.25,2.1651) -- (7.5,0);

\filldraw[black] (0,0) circle (2pt) node[anchor=east] {$x_{2}$};
\filldraw[black] (10,0) circle (2pt) node[anchor=west] {$x_{1}$};
\filldraw[black] (5,8.6603) circle (2pt) node[anchor=south] {$x_{0}$};
\filldraw[black] (2.5,4.3301) circle (2pt) node[anchor=east] {$w_{1}$};
\filldraw[black] (7.5,4.3301) circle (2pt) node[anchor=west] {$w_{2}$};
\filldraw[black] (5,0) circle (2pt) node[anchor=north] {$w_{0}$};
\filldraw[black] (6.25,6.4952) circle (2pt) node[anchor=west] {$y_{0,1}$};
\filldraw[black] (8.75,2.1651) circle (2pt) node[anchor=west] {$y_{1,0}$};
\filldraw[black] (3.75,2.1651) circle (2pt) node[anchor=west] {$z_{2}$};
\filldraw[black] (1.25,2.1651) circle (2pt) node[anchor=east] {$y_{2,0}$};
\filldraw[black] (6.25,2.1651) circle (2pt) node[anchor=east] {$z_{1}$};
\filldraw[black] (3.75,6.4952) circle (2pt) node[anchor=east] {$y_{0,2}$};
\filldraw[black] (5,4.3301) circle (2pt) node[anchor=north] {$z_{0}$};
\filldraw[black] (2.5,0) circle (2pt) node[anchor=north] {$y_{2,1}$};
\filldraw[black] (7.5,0) circle (2pt) node[anchor=north] {$y_{1,2}$};

\end{tikzpicture}
\caption{A single $m$-cell with vertices $x_{0},x_{1},x_{2}$}\label{fig:sgmcell}
\end{figure}

Given an eigenfunction $u_{m}$ on level $m$, consider any $m$-cell with vertices $x_{0},x_{1},x_{2}$, as in Figure \ref{fig:sgmcell}. The extension to $V_{m+1}$ creates 12 new points. To ensure that our extension is an eigenfunction, we mandate that 

\begin{equation}
-\Delta_{m+1} u_{m+1}(x) = \lambda_{m+1}u_{m+1}(x)\quad \forall \ x\in V_{m+1}\setminus V_0
\end{equation}

Evaluating this equation on the 12 points produced via subdivision yields a 12-equation, 16-variable system that can be solved algebraically to yield $y_{i,j},w_{i}$, and $z_{i}$ as functions of $x_{0},x_{1},x_{2},\lambda_{m+1}$. For readability we will refer to $u(a)$ as $a$. These functions can be written as

\newpage

\begin{align}
w_{0}(x_{0},x_{1},x_{2},\lambda_{m+1},r) &= \frac{81x_{0}(-3+(2r+r^{2})(\lambda_{m+1}-9)+\lambda_{m+1})}{\gamma(r,\lambda_{m+1})} \nonumber
\\
&+\frac{9(x_{1}+x_{2})(-189+135\lambda_{m+1}-30\lambda_{m+1}^{2}+2\lambda_{m+1}^{3})}{\gamma(r,\lambda_{m+1})} \nonumber
\\
&+\frac{9(x_{1}+x_{2})(r^{2}(-81+177\lambda_{m+1}-30\lambda_{m+1}^{2}+2\lambda_{m+1}^{3}))}{\gamma(r,\lambda_{m+1})} \nonumber
\\
&+\frac{9(x_{1}+x_{2})(2r(-135+135\lambda-30\lambda_{m+1}^{2}+2\lambda_{m+1}^{3}))}{\gamma(r,\lambda_{m+1})} \label{ext1}
\\
z_{0}(x_{0},x_{1},x_{2},\lambda_{m+1},r) &= \frac{-9(x_{1}+x_{2})(54-27\lambda_{m+1}+3\lambda_{m+1}^{2}+r^{2}(81-36\lambda_{m+1}+3\lambda_{m+1}^{2}))}{\gamma(r,\lambda_{m+1})} \nonumber
\\
&+\frac{-9r(x_{1}+x_{2})(189-63\lambda_{m+1}+6\lambda_{m+1}^{2})}{\gamma(r,\lambda_{m+1})} \nonumber
\\
&+\frac{9x_{0}(-297+225\lambda_{m+1}+r^{2}(-81+171\lambda_{m+1}-54\lambda_{m+1}^{2}+4\lambda_{m+1}^{3}))}{\gamma(r,\lambda_{m+1})} \nonumber
\\
&+\frac{9x_{0}(-54\lambda_{m+1}^{2}+4\lambda_{m+1}^{3}+r(-324+432\lambda_{m+1}-108\lambda_{m+1}^{2}+8\lambda_{m+1}))}{\gamma(r,\lambda_{m+1})} \label{ext2}
\\
y_{0,1}(x_{0},x_{1},x_{2},\lambda_{m+1},r) &= \frac{-3x_{0}(\lambda_{m+1}-3)^{2}(135-48\lambda_{m+1}+4\lambda_{m+1}^{2})}{\gamma(r,\lambda_{m+1})} \nonumber
\\
&+\frac{-3r^{2}x_{0}(243-756\lambda_{m+1}+405\lambda_{m+1}^{2}-72\lambda_{m+1}^{3}+4\lambda_{m+1}^{4})}{\gamma(r,\lambda_{m+1})} \nonumber
\\
&+\frac{-3rx_{0}(1134-2106\lambda_{m+1}+900\lambda_{m+1}^{2}-144\lambda_{m+1}^{3}+8\lambda_{m+1}^{4})}{\gamma(r,\lambda_{m+1})} \nonumber
\\
&+\frac{-3r x_{2}(405-r(27\lambda_{m+1}-243)-81\lambda_{m+1})}{\gamma(r,\lambda_{m+1})} \nonumber
\\
&+\frac{-3r x_{1}(567-189\lambda_{m+1}+18\lambda_{m+1}^{2}+r(243-189\lambda_{m+1}+18\lambda_{m+1}^{2}))}{\gamma(r,\lambda_{m+1})} \label{ext3}
\end{align}

Similar equations for the remaining points can be obtained by permuting the indices $0,1,2$. The term $\gamma(r,\lambda_{m+1})$ is defined as

\begin{align}
\gamma(r,\lambda_{m+1}) &= (9-3(2+3r)\lambda_{m+1}+(1+r)\lambda_{m+1}^{2})(-405+279\lambda_{m+1}-60\lambda_{m+1}^{2}+4\lambda_{m+1}^{3}) \nonumber
\\
&+ r(9-3(2+3r)\lambda_{m+1}+(1+r)\lambda_{m+1}^{2})(-702+558\lambda_{m+1}-120\lambda_{m+1}^{2}+8\lambda_{m+1}^{3}) \nonumber
\\
&+ r^{2}(9-3(2+3r)\lambda_{m+1}+(1+r)\lambda_{m+1}^{2})(-243+243\lambda_{m+1}-60\lambda_{m+1}^{2}+4\lambda_{m+1}^{3})
\end{align}

a polynomial of degree 3 in $r$ and degree 5 in $\lambda_{m+1}$. Note that the extension equations are valid for any $r,\lambda_{m+1}$ such that $\gamma(r,\lambda_{m+1})\neq0$. I turns out that $\gamma$ belongs to teh special class of invertible quintics, and so in order to ensure $\gamma(r,\lambda_{m+1})\neq0$ we follow the strategy of \cite{Strichartz} and record forbidden eigenvalues $b_{1}(r),b_{2}(r),b_{3}(r),b_{4}(r),b_{5}(r)$ as the roots of $\gamma(r,\lambda_{m+1})$ in $\lambda_{m+1}$ as a function of $r$ such that 
\begin{equation}
b_1(1) < b_4(1) < b_5(1) < b_2(1) < b_3(1)
\end{equation}

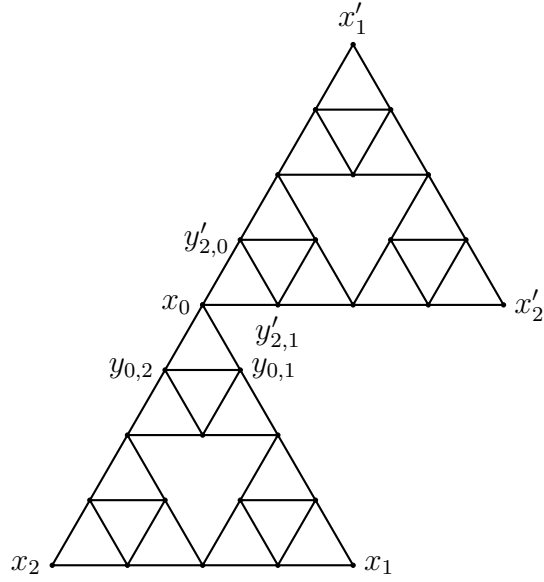
\begin{figure} 
\centering
\begin{tikzpicture}[scale=0.4]

\begin{scope}[xshift = 5cm,yshift=8.6603cm]

\draw[black,thick] (0,0) -- (10,0);
\draw[black,thick] (0,0) -- (5,8.6603);
\draw[black,thick] (5,8.6603) -- (10,0);
\draw[black,thick] (1.25,2.1651) -- (3.75,2.1651);
\draw[black,thick] (2.5,4.3301) -- (7.5,4.3301);
\draw[black,thick] (3.75,6.4952) -- (6.25,6.4952);
\draw[black,thick] (3.75,2.1651) -- (2.5,0);
\draw[black,thick] (7.5,4.3301) -- (5,0);
\draw[black,thick] (8.75,2.1651) -- (7.5,0);
\draw[black,thick] (1.25,2.1651) -- (2.5,0);
\draw[black,thick] (2.5,4.3301) -- (5,0);
\draw[black,thick] (3.75,6.4952) -- (5,4.3301);
\draw[black,thick] (6.25,6.4952) -- (5,4.3301);
\draw[black,thick] (6.25,2.1651) -- (8.75,2.1651);
\draw[black,thick] (6.25,2.1651) -- (7.5,0);

\filldraw[black] (0,0) circle (2pt) node[anchor=east] {};
\filldraw[black] (10,0) circle (2pt) node[anchor=west] {$x_{2}'$};
\filldraw[black] (5,8.6603) circle (2pt) node[anchor=south] {$x_{1}'$};
\filldraw[black] (2.5,4.3301) circle (2pt) node[anchor=east] {};
\filldraw[black] (7.5,4.3301) circle (2pt) node[anchor=west] {};
\filldraw[black] (5,0) circle (2pt) node[anchor=north] {};
\filldraw[black] (6.25,6.4952) circle (2pt) node[anchor=west] {};
\filldraw[black] (8.75,2.1651) circle (2pt) node[anchor=west] {};
\filldraw[black] (3.75,2.1651) circle (2pt) node[anchor=west] {};
\filldraw[black] (1.25,2.1651) circle (2pt) node[anchor=east] {$y_{2,0}'$};
\filldraw[black] (6.25,2.1651) circle (2pt) node[anchor=east] {};
\filldraw[black] (3.75,6.4952) circle (2pt) node[anchor=east] {};
\filldraw[black] (5,4.3301) circle (2pt) node[anchor=north] {};
\filldraw[black] (2.5,0) circle (2pt) node[anchor=north] {$y_{2,1}'$};
\filldraw[black] (7.5,0) circle (2pt) node[anchor=north] {};

\end{scope}

\draw[black,thick] (0,0) -- (10,0);
\draw[black,thick] (0,0) -- (5,8.6603);
\draw[black,thick] (5,8.6603) -- (10,0);
\draw[black,thick] (1.25,2.1651) -- (3.75,2.1651);
\draw[black,thick] (2.5,4.3301) -- (7.5,4.3301);
\draw[black,thick] (3.75,6.4952) -- (6.25,6.4952);
\draw[black,thick] (3.75,2.1651) -- (2.5,0);
\draw[black,thick] (7.5,4.3301) -- (5,0);
\draw[black,thick] (8.75,2.1651) -- (7.5,0);
\draw[black,thick] (1.25,2.1651) -- (2.5,0);
\draw[black,thick] (2.5,4.3301) -- (5,0);
\draw[black,thick] (3.75,6.4952) -- (5,4.3301);
\draw[black,thick] (6.25,6.4952) -- (5,4.3301);
\draw[black,thick] (6.25,2.1651) -- (8.75,2.1651);
\draw[black,thick] (6.25,2.1651) -- (7.5,0);

\filldraw[black] (0,0) circle (2pt) node[anchor=east] {$x_{2}$};
\filldraw[black] (10,0) circle (2pt) node[anchor=west] {$x_{1}$};
\filldraw[black] (5,8.6603) circle (2pt) node[anchor=east] {$x_{0}$};
\filldraw[black] (2.5,4.3301) circle (2pt) node[anchor=east] {};
\filldraw[black] (7.5,4.3301) circle (2pt) node[anchor=west] {};
\filldraw[black] (5,0) circle (2pt) node[anchor=north] {};
\filldraw[black] (6.25,6.4952) circle (2pt) node[anchor=west] {$y_{0,1}$};
\filldraw[black] (8.75,2.1651) circle (2pt) node[anchor=west] {};
\filldraw[black] (3.75,2.1651) circle (2pt) node[anchor=west] {};
\filldraw[black] (1.25,2.1651) circle (2pt) node[anchor=east] {};
\filldraw[black] (6.25,2.1651) circle (2pt) node[anchor=east] {};
\filldraw[black] (3.75,6.4952) circle (2pt) node[anchor=east] {$y_{0,2}$};
\filldraw[black] (5,4.3301) circle (2pt) node[anchor=north] {};
\filldraw[black] (2.5,0) circle (2pt) node[anchor=north] {};
\filldraw[black] (7.5,0) circle (2pt) node[anchor=north] {};

\end{tikzpicture}
\caption{The $m$-cells around $x_0$}\label{fig:sgmcells}
\end{figure}

These formulas were designed to extend an eigenfunction to $V_{m+1}$, but we must verify that the eigenfunction equation also holds on $V_{m}$ -- that is, the points $x_{0},x_{1},x_{2}$. Specifically consider $x_{0}$, and in addition to the $m$-cell considered above, consider the other $m$-cell with vertex $x_{0}$ as seen in Figure \ref{fig:sgmcells}. We must make the pointwise Laplacian hold at $x_{0}$ -- there are 3 distinct cases based upon the value of $i$ on the two $m$-cells, but the algebraic result is the same for all three cases. In the simplest case, we are given
\begin{equation}
\lambda_{m}x_{0} = \frac{3}{2}( (2x_{0}-x_{2}-x_{1})+(2x_{0}-x_{2}'-x_{1}'))
\end{equation}

by the $m$-level eigenvalue equation, and seek to verify that

\begin{align}
\lambda_{m+1}x_{0} &= \frac{3}{2} (2x_{0}-y_{0,1}(x_{0},x_{1},x_{2},\lambda_{m+1},r)-y_{0,2}(x_{0},x_{1},x_{2},\lambda_{m+1},r)) \nonumber
\\
&+\frac{3}{2}(2x_{0}-y_{2,0}(x_{1}',x_{2}',x_{0},\lambda_{m+1},r)-y_{2,1}(x_{1}',x_{2}',x_{0},\lambda_{m+1},r)) \label{m+1cell}
\end{align}

Mandating that both above conditions hold allows the formulation of a condition on $\lambda_{m}$ as a function of $\lambda_{m+1}$ and $r$ -- in particular we require that

\begin{align}
\lambda_{m}(\lambda_{m+1},r) &= \frac{-\lambda_{m+1}(\lambda_{m+1}-3)^{2}(135-48\lambda_{m+1}+4\lambda_{m+1}^{2})}{54r(-6+r(\lambda_{m+1}-3)+\lambda_{m+1})} \nonumber
\\
&+\frac{-r^{3}\lambda_{m+1}(1458-1701\lambda_{m+1}+603\lambda_{m+1}^{2}-84\lambda_{m+1}^{3}+4\lambda_{m+1}^{4})}{54r(-6+r(\lambda_{m+1}-3)+\lambda_{m+1})} \nonumber
\\
&+\frac{-r^{2}\lambda_{m+1}(4941-5022\lambda_{m+1}+1701\lambda_{m+1}^{2}-240\lambda_{m+1}^{3}+12\lambda_{m+1}^{4})}{54r(-6+r(\lambda_{m+1}-3)+\lambda_{m+1})} \nonumber
\\
&+\frac{-r\lambda_{m+1}(4536-4455\lambda_{m+1}+1557\lambda_{m+1}^{2}-228\lambda_{m+1}^{3}+12\lambda_{m+1}^{4})}{54r(-6+r(\lambda_{m+1}-3)+\lambda_{m+1})} \label{lambdamap}
\end{align}

holds. This equation is quintic in $\lambda_{m+1}$, and so we must compute numerical inverses. We denote these 5 inverses by $\Phi^{i}(\lambda_{m},r)$, with $\Phi^{i}(\lambda_{m},r) \leq \Phi^{i+1}(\lambda_{m},r) \ \forall \ \lambda_{m}$ for fixed $r$. We must also add a sixth forbidden eigenvalue, $b_{6}=9$. If $\lambda_{m+1}=9$, then (\ref{m+1cell}) is satisfied regardless of the value of $\lambda_{m}$. These equations provide a recipe for extension of existing eigenvalues; but just as in the case of the standard gasket, we must also account for eigenfunction/values that cannot be produced by this decimation process. These equations were derived independent of any boundary conditions. For the remainder of this section, we will provide a detailed analysis and counting argument for the eigenvalues and functions which are born on each level, to demonstrate that we have described the complete spectrum, under the Dirichlet boundary assumption. We believe that analagous analysis for the Neumann boundary assumption is possible.

\begin{figure}
\centering
  \begin{subfigure}{\linewidth}
  \centering
  \begin{tikzpicture}[scale=0.4]
\draw[black,thick] (0,0) -- (10,0);
\draw[black,thick] (0,0) -- (5,8.6603);
\draw[black,thick] (5,8.6603) -- (10,0);
\draw[black,thick] (1.25,2.1651) -- (3.75,2.1651);
\draw[black,thick] (2.5,4.3301) -- (7.5,4.3301);
\draw[black,thick] (3.75,6.4952) -- (6.25,6.4952);
\draw[black,thick] (3.75,2.1651) -- (2.5,0);
\draw[black,thick] (7.5,4.3301) -- (5,0);
\draw[black,thick] (8.75,2.1651) -- (7.5,0);
\draw[black,thick] (1.25,2.1651) -- (2.5,0);
\draw[black,thick] (2.5,4.3301) -- (5,0);
\draw[black,thick] (3.75,6.4952) -- (5,4.3301);
\draw[black,thick] (6.25,6.4952) -- (5,4.3301);
\draw[black,thick] (6.25,2.1651) -- (8.75,2.1651);
\draw[black,thick] (6.25,2.1651) -- (7.5,0);

\filldraw[black] (0,0) circle (2pt) node[anchor=east] {$0$};
\filldraw[black] (10,0) circle (2pt) node[anchor=west] {$0$};
\filldraw[black] (5,8.6603) circle (2pt) node[anchor=south] {$0$};
\filldraw[black] (2.5,4.3301) circle (2pt) node[anchor=east] {$1$};
\filldraw[black] (7.5,4.3301) circle (2pt) node[anchor=west] {$1$};
\filldraw[black] (5,0) circle (2pt) node[anchor=north] {$1$};
\filldraw[black] (6.25,6.4952) circle (2pt) node[anchor=west] {$a$};
\filldraw[black] (8.75,2.1651) circle (2pt) node[anchor=west] {$a$};
\filldraw[black] (3.75,2.1651) circle (2pt) node[anchor=west] {$1$};
\filldraw[black] (1.25,2.1651) circle (2pt) node[anchor=east] {$a$};
\filldraw[black] (6.25,2.1651) circle (2pt) node[anchor=east] {$1$};
\filldraw[black] (3.75,6.4952) circle (2pt) node[anchor=east] {$a$};
\filldraw[black] (5,4.3301) circle (2pt) node[anchor=north] {$1$};
\filldraw[black] (2.5,0) circle (2pt) node[anchor=north] {$a$};
\filldraw[black] (7.5,0) circle (2pt) node[anchor=north] {$a$};

\end{tikzpicture}
\begin{tikzpicture}[scale=0.4]
\draw[black,thick] (0,0) -- (10,0);
\draw[black,thick] (0,0) -- (5,8.6603);
\draw[black,thick] (5,8.6603) -- (10,0);
\draw[black,thick] (1.25,2.1651) -- (3.75,2.1651);
\draw[black,thick] (2.5,4.3301) -- (7.5,4.3301);
\draw[black,thick] (3.75,6.4952) -- (6.25,6.4952);
\draw[black,thick] (3.75,2.1651) -- (2.5,0);
\draw[black,thick] (7.5,4.3301) -- (5,0);
\draw[black,thick] (8.75,2.1651) -- (7.5,0);
\draw[black,thick] (1.25,2.1651) -- (2.5,0);
\draw[black,thick] (2.5,4.3301) -- (5,0);
\draw[black,thick] (3.75,6.4952) -- (5,4.3301);
\draw[black,thick] (6.25,6.4952) -- (5,4.3301);
\draw[black,thick] (6.25,2.1651) -- (8.75,2.1651);
\draw[black,thick] (6.25,2.1651) -- (7.5,0);

\filldraw[black] (5,7) circle (0pt) node[anchor=south] {$0$};
\filldraw[black] (8.75,0.5) circle (0pt) node[anchor=south] {$0$};
\filldraw[black] (1.25,0.5) circle (0pt) node[anchor=south] {$0$};
\filldraw[black] (6.25,4.65) circle (0pt) node[anchor=south] {$A$};
\filldraw[black] (3.75,4.65) circle (0pt) node[anchor=south] {$-A$};
\filldraw[black] (7.5,2.45) circle (0pt) node[anchor=south] {$-A$};
\filldraw[black] (2.5,2.45) circle (0pt) node[anchor=south] {$A$};
\filldraw[black] (6.25,0.15) circle (0pt) node[anchor=south] {$A$};
\filldraw[black] (3.75,0.15) circle (0pt) node[anchor=south] {$-A$};

\filldraw[black] (0,0) circle (2pt) node[anchor=east] {$ $};
\filldraw[black] (10,0) circle (2pt) node[anchor=west] {$ $};
\filldraw[black] (5,8.6603) circle (2pt) node[anchor=south] {$ $};
\filldraw[black] (2.5,4.3301) circle (2pt) node[anchor=east] {$ $};
\filldraw[black] (7.5,4.3301) circle (2pt) node[anchor=west] {$ $};
\filldraw[black] (5,0) circle (2pt) node[anchor=north] {$ $};
\filldraw[black] (6.25,6.4952) circle (2pt) node[anchor=west] {$ $};
\filldraw[black] (8.75,2.1651) circle (2pt) node[anchor=west] {$ $};
\filldraw[black] (3.75,2.1651) circle (2pt) node[anchor=west] {$ $};
\filldraw[black] (1.25,2.1651) circle (2pt) node[anchor=east] {$ $};
\filldraw[black] (6.25,2.1651) circle (2pt) node[anchor=east] {$ $};
\filldraw[black] (3.75,6.4952) circle (2pt) node[anchor=east] {$ $};
\filldraw[black] (5,4.3301) circle (2pt) node[anchor=north] {$ $};
\filldraw[black] (2.5,0) circle (2pt) node[anchor=north] {$ $};
\filldraw[black] (7.5,0) circle (2pt) node[anchor=north] {$ $};

\end{tikzpicture} 
  \caption{Left: Eigenfunction for $b_{1},b_{2}$ Right: Eigenfunction for $b_{3},b_{4},b_{5}$}
  \label{fig:eigfuncdec1}
  \end{subfigure}
  \begin{subfigure}{\linewidth}
  \centering
    \begin{tikzpicture}[scale=0.4]
\draw[black,thick] (0,0) -- (10,0);
\draw[black,thick] (0,0) -- (5,8.6603);
\draw[black,thick] (5,8.6603) -- (10,0);
\draw[black,thick] (1.25,2.1651) -- (3.75,2.1651);
\draw[black,thick] (2.5,4.3301) -- (7.5,4.3301);
\draw[black,thick] (3.75,6.4952) -- (6.25,6.4952);
\draw[black,thick] (3.75,2.1651) -- (2.5,0);
\draw[black,thick] (7.5,4.3301) -- (5,0);
\draw[black,thick] (8.75,2.1651) -- (7.5,0);
\draw[black,thick] (1.25,2.1651) -- (2.5,0);
\draw[black,thick] (2.5,4.3301) -- (5,0);
\draw[black,thick] (3.75,6.4952) -- (5,4.3301);
\draw[black,thick] (6.25,6.4952) -- (5,4.3301);
\draw[black,thick] (6.25,2.1651) -- (8.75,2.1651);
\draw[black,thick] (6.25,2.1651) -- (7.5,0);

\draw[dashed] (1.25,2.5) -- (1.25,-0.5);
\draw[dashed] (3.75,2.5) -- (3.75,-0.5);
\draw[dashed] (6.25,2.5) -- (6.25,-0.5);
\draw[dashed] (8.75,2.5) -- (8.75,-0.5);

\filldraw[black] (5,7) circle (0pt) node[anchor=south] {$0$};
\filldraw[black] (8.75,0.15) circle (0pt) node[anchor=south] {$-A$};
\filldraw[black] (1.25,0.15) circle (0pt) node[anchor=south] {$A$};
\filldraw[black] (6.25,4.8) circle (0pt) node[anchor=south] {$0$};
\filldraw[black] (3.75,4.8) circle (0pt) node[anchor=south] {$0$};
\filldraw[black] (7.5,2.7) circle (0pt) node[anchor=south] {$0$};
\filldraw[black] (2.5,2.7) circle (0pt) node[anchor=south] {$0$};
\filldraw[black] (6.25,0.15) circle (0pt) node[anchor=south] {$A$};
\filldraw[black] (3.75,0.15) circle (0pt) node[anchor=south] {$-A$};

\filldraw[black] (0,0) circle (2pt) node[anchor=east] {$ $};
\filldraw[black] (10,0) circle (2pt) node[anchor=west] {$ $};
\filldraw[black] (5,8.6603) circle (2pt) node[anchor=south] {$ $};
\filldraw[black] (2.5,4.3301) circle (2pt) node[anchor=east] {$ $};
\filldraw[black] (7.5,4.3301) circle (2pt) node[anchor=west] {$ $};
\filldraw[black] (5,0) circle (2pt) node[anchor=north] {$ $};
\filldraw[black] (6.25,6.4952) circle (2pt) node[anchor=west] {$ $};
\filldraw[black] (8.75,2.1651) circle (2pt) node[anchor=west] {$ $};
\filldraw[black] (3.75,2.1651) circle (2pt) node[anchor=west] {$ $};
\filldraw[black] (1.25,2.1651) circle (2pt) node[anchor=east] {$ $};
\filldraw[black] (6.25,2.1651) circle (2pt) node[anchor=east] {$ $};
\filldraw[black] (3.75,6.4952) circle (2pt) node[anchor=east] {$ $};
\filldraw[black] (5,4.3301) circle (2pt) node[anchor=north] {$ $};
\filldraw[black] (2.5,0) circle (2pt) node[anchor=north] {$ $};
\filldraw[black] (7.5,0) circle (2pt) node[anchor=north] {$ $};

\end{tikzpicture}
\begin{tikzpicture}[scale=0.4]
\draw[black,thick] (0,0) -- (10,0);
\draw[black,thick] (0,0) -- (5,8.6603);
\draw[black,thick] (5,8.6603) -- (10,0);
\draw[black,thick] (1.25,2.1651) -- (3.75,2.1651);
\draw[black,thick] (2.5,4.3301) -- (7.5,4.3301);
\draw[black,thick] (3.75,6.4952) -- (6.25,6.4952);
\draw[black,thick] (3.75,2.1651) -- (2.5,0);
\draw[black,thick] (7.5,4.3301) -- (5,0);
\draw[black,thick] (8.75,2.1651) -- (7.5,0);
\draw[black,thick] (1.25,2.1651) -- (2.5,0);
\draw[black,thick] (2.5,4.3301) -- (5,0);
\draw[black,thick] (3.75,6.4952) -- (5,4.3301);
\draw[black,thick] (6.25,6.4952) -- (5,4.3301);
\draw[black,thick] (6.25,2.1651) -- (8.75,2.1651);
\draw[black,thick] (6.25,2.1651) -- (7.5,0);

\begin{scope}[cm={cos(-60) ,-sin(-60) ,sin(-60) ,cos(-60) ,(1.8 cm,-1.05 cm)}]
\draw[dashed] (1.25,2.5) -- (1.25,-0.5);
\draw[dashed] (3.75,2.5) -- (3.75,-0.5);
\draw[dashed] (6.25,2.5) -- (6.25,-0.5);
\draw[dashed] (8.75,2.5) -- (8.75,-0.5);
\end{scope}

\begin{scope}[xshift = -0.3 cm,yshift = -0.3 cm]
\filldraw[black] (2.5,2.7) circle (0pt) node[anchor=south] {$A$};
\filldraw[black] (5,7) circle (0pt) node[anchor=south] {$A$};
\filldraw[black] (1.25,0.5) circle (0pt) node[anchor=south] {$-A$};
\filldraw[black] (3.75,4.8) circle (0pt) node[anchor=south] {$-A$};
\end{scope}

\filldraw[black] (8.75,0.5) circle (0pt) node[anchor=south] {$0$};
\filldraw[black] (6.25,4.8) circle (0pt) node[anchor=south] {$0$};
\filldraw[black] (7.5,2.7) circle (0pt) node[anchor=south] {$0$};
\filldraw[black] (6.25,0.5) circle (0pt) node[anchor=south] {$0$};
\filldraw[black] (3.75,0.5) circle (0pt) node[anchor=south] {$0$};

\filldraw[black] (0,0) circle (2pt) node[anchor=east] {$ $};
\filldraw[black] (10,0) circle (2pt) node[anchor=west] {$ $};
\filldraw[black] (5,8.6603) circle (2pt) node[anchor=south] {$ $};
\filldraw[black] (2.5,4.3301) circle (2pt) node[anchor=east] {$ $};
\filldraw[black] (7.5,4.3301) circle (2pt) node[anchor=west] {$ $};
\filldraw[black] (5,0) circle (2pt) node[anchor=north] {$ $};
\filldraw[black] (6.25,6.4952) circle (2pt) node[anchor=west] {$ $};
\filldraw[black] (8.75,2.1651) circle (2pt) node[anchor=west] {$ $};
\filldraw[black] (3.75,2.1651) circle (2pt) node[anchor=west] {$ $};
\filldraw[black] (1.25,2.1651) circle (2pt) node[anchor=east] {$ $};
\filldraw[black] (6.25,2.1651) circle (2pt) node[anchor=east] {$ $};
\filldraw[black] (3.75,6.4952) circle (2pt) node[anchor=east] {$ $};
\filldraw[black] (5,4.3301) circle (2pt) node[anchor=north] {$ $};
\filldraw[black] (2.5,0) circle (2pt) node[anchor=north] {$ $};
\filldraw[black] (7.5,0) circle (2pt) node[anchor=north] {$ $};

\end{tikzpicture}
  \caption{Two independent eigenfunctions for $b_{3},b_{4},b_{5}$}
  \label{fig:eigfuncdec2}
  \end{subfigure}
    \begin{subfigure}{\linewidth}
    \centering
\begin{tikzpicture}[scale=0.4]
\draw[black,thick] (0,0) -- (10,0);
\draw[black,thick] (0,0) -- (5,8.6603);
\draw[black,thick] (5,8.6603) -- (10,0);
\draw[black,thick] (1.25,2.1651) -- (3.75,2.1651);
\draw[black,thick] (2.5,4.3301) -- (7.5,4.3301);
\draw[black,thick] (3.75,6.4952) -- (6.25,6.4952);
\draw[black,thick] (3.75,2.1651) -- (2.5,0);
\draw[black,thick] (7.5,4.3301) -- (5,0);
\draw[black,thick] (8.75,2.1651) -- (7.5,0);
\draw[black,thick] (1.25,2.1651) -- (2.5,0);
\draw[black,thick] (2.5,4.3301) -- (5,0);
\draw[black,thick] (3.75,6.4952) -- (5,4.3301);
\draw[black,thick] (6.25,6.4952) -- (5,4.3301);
\draw[black,thick] (6.25,2.1651) -- (8.75,2.1651);
\draw[black,thick] (6.25,2.1651) -- (7.5,0);

\filldraw[black] (0,0) circle (2pt) node[anchor=east] {$0$};
\filldraw[black] (10,0) circle (2pt) node[anchor=west] {$0$};
\filldraw[black] (5,8.6603) circle (2pt) node[anchor=south] {$0$};
\filldraw[black] (2.5,4.3301) circle (2pt) node[anchor=east] {$0$};
\filldraw[black] (7.5,4.3301) circle (2pt) node[anchor=west] {$0$};
\filldraw[black] (5,0) circle (2pt) node[anchor=north] {$2$};
\filldraw[black] (6.25,6.4952) circle (2pt) node[anchor=west] {$0$};
\filldraw[black] (8.75,2.1651) circle (2pt) node[anchor=west] {$1$};
\filldraw[black] (3.75,2.1651) circle (2pt) node[anchor=west] {$-1$};
\filldraw[black] (1.25,2.1651) circle (2pt) node[anchor=east] {$1$};
\filldraw[black] (6.25,2.1651) circle (2pt) node[anchor=east] {$-1$};
\filldraw[black] (3.75,6.4952) circle (2pt) node[anchor=east] {$0$};
\filldraw[black] (5,4.3301) circle (2pt) node[anchor=north] {$0$};
\filldraw[black] (2.5,0) circle (2pt) node[anchor=north] {$-1$};
\filldraw[black] (7.5,0) circle (2pt) node[anchor=north] {$-1$};

\end{tikzpicture}
\begin{tikzpicture}[scale=0.4]
\draw[black,thick] (0,0) -- (10,0);
\draw[black,thick] (0,0) -- (5,8.6603);
\draw[black,thick] (5,8.6603) -- (10,0);
\draw[black,thick] (1.25,2.1651) -- (3.75,2.1651);
\draw[black,thick] (2.5,4.3301) -- (7.5,4.3301);
\draw[black,thick] (3.75,6.4952) -- (6.25,6.4952);
\draw[black,thick] (3.75,2.1651) -- (2.5,0);
\draw[black,thick] (7.5,4.3301) -- (5,0);
\draw[black,thick] (8.75,2.1651) -- (7.5,0);
\draw[black,thick] (1.25,2.1651) -- (2.5,0);
\draw[black,thick] (2.5,4.3301) -- (5,0);
\draw[black,thick] (3.75,6.4952) -- (5,4.3301);
\draw[black,thick] (6.25,6.4952) -- (5,4.3301);
\draw[black,thick] (6.25,2.1651) -- (8.75,2.1651);
\draw[black,thick] (6.25,2.1651) -- (7.5,0);

\filldraw[black] (0,0) circle (2pt) node[anchor=east] {$0$};
\filldraw[black] (10,0) circle (2pt) node[anchor=west] {$0$};
\filldraw[black] (5,8.6603) circle (2pt) node[anchor=south] {$0$};
\filldraw[black] (2.5,4.3301) circle (2pt) node[anchor=east] {$0$};
\filldraw[black] (7.5,4.3301) circle (2pt) node[anchor=west] {$0$};
\filldraw[black] (5,0) circle (2pt) node[anchor=north] {$0$};
\filldraw[black] (6.25,6.4952) circle (2pt) node[anchor=west] {$1$};
\filldraw[black] (8.75,2.1651) circle (2pt) node[anchor=west] {$-1$};
\filldraw[black] (3.75,2.1651) circle (2pt) node[anchor=west] {$0$};
\filldraw[black] (1.25,2.1651) circle (2pt) node[anchor=east] {$1$};
\filldraw[black] (6.25,2.1651) circle (2pt) node[anchor=east] {$0$};
\filldraw[black] (3.75,6.4952) circle (2pt) node[anchor=east] {$-1$};
\filldraw[black] (5,4.3301) circle (2pt) node[anchor=north] {$0$};
\filldraw[black] (2.5,0) circle (2pt) node[anchor=north] {$-1$};
\filldraw[black] (7.5,0) circle (2pt) node[anchor=north] {$1$};

\end{tikzpicture}
  \caption{Left: Eigenfunction for $b_{6}=6$ Right: Eigenfunction for $b_{7}$}
  \label{fig:eigfuncdec3}
  \end{subfigure}
  \caption{}
\end{figure}

Eigenvalues $b_{1}$ and $b_{2}$ are born on level 1 only. Both are associated with eigenfunctions according to Figure \ref{fig:eigfuncdec1}. For $b_{1}$, the value taken on at points marked $a$ is $\frac{4r}{r+\sqrt{r(8+9r)}}$, while for $b_{2}$ the value at $a$ is $\frac{4r}{r-\sqrt{r(8+9r)}}$.

Eigenvalues $b_{3},b_{4}$, and $b_{5}$ are all present with multiplicity 2 and a  basis of two skew-symmetric functions (linearly independent via rotation about the gasket) on level 1. Analytic formulas for these functions exist as functions of $r$ and can be obtained via symbolic eigenvector computations on a symbolic level 1 Laplacian matrix, and verify that such functions exist and are skew-symmetric, but are otherwise too large and unwieldy to be analyzed. If we denote these level 1 functions by $A$, then on higher levels we may create the eigenfunction see in Figure \ref{fig:eigfuncdec1}, by placing $A$ and $-A$ around an `empty' cell. We may also create, independently, the two eigenfunctions seen in Figure \ref{fig:eigfuncdec2} by gluing $A$ together along the boundary. The number of `empty' cells on level $m+1$ is given by $\sum_{i=0}^{2m-1} 3^i$, so that adding 2 gives the lower bound for the multiplicity for all three of these eigenvalues: $(\sum_{i=0}^{2m-1}3^i)+2 = \frac{3^{2m}+3}{2}$.

For $\lambda = b_6 = 9$, the multiplicity and eigenfunctions are independent of $r$. Consider Figure \ref{fig:eigfuncdec3}. We see that this is the same as the eigenfunction in the standard case described by \cite{Strichartz}. Computations show that the same construction as on the standard case, i.e. rotating and fitting this function on level $m=1$ so that $u(x) = 2$ for some $x \in V_{m+1}$ to get an eigenfunction with the same eigenvalue on level $m+1$, is successful. Since this construction is exactly the same as the construction on the standard SG shown by \cite{Strichartz}, we will take the same lower bound for the multiplicity of $\lambda = 9$ on level $m+1$, which is $\frac{3^{2m+2}-3}{2}$.

Consider $\lambda = \frac{9+6r}{1+r}$, which we will call $b_7$. On level 1, this eigenvalue corresponds to this function in Figure \ref{fig:eigfuncdec3}. This function can be miniaturized into each cell of the next level, creating $9$ new eigenfunctions. These new functions are all independent of each other, meaning that the multiplicity of $b_{7}$ on level $m+1$ is  $\geq 9^{m}$, the number of such cells in level $m+1$.

We need to confirm that the eigenvalues born on level $m+1$ are not decimated to through the eigenvalue extension mapping. We see that applying (\ref{lambdamap}) to these eigenvalues yields $\lambda_m(b_2), \lambda_m(b_3), \lambda_m(b_4), \lambda_m(b_5), \lambda_m(b_6) \leq 0$, which is impossible for Dirichlet eigenvalues. For $b_7$, we need to consider the eigenfunction extension mapping. In order for the function shown in \ref{fig:eigfuncdec3} to arise in level $m+1$, the function would have been uniformly $0$ on level $m$, which is also impossible for Dirichlet eigenfunctions.

Now we compile these results into a counting argument to show that we have acquired every eigenfunction for any level $m>1$. The sum of the multiplicities of the eigenvalues that are born on level $m>1$ is $9^{m}+3\left(\frac{3^{2m}+3}{2}\right)+\frac{3^{2m}-1}{2}$. We know that, with $\#(V_{m}\setminus V_0)$ points, level $m$ will have $\#(V_{m}\setminus V_0) = \frac{3^{2m+1}-3}{2}$ eigenvalues. Each of these will decimate to $5$ new values on level $m$, except for the $\frac{3^{2m}-3}{2}$ eigenvalues on level $m$ corresponding $\lambda = 6$, which will only decimate to $3$ new values ($2$ of the $5$ always lead to forbidden eigenvalues $b_{1}$ and $b_{2}$). This means that the number of eigenvalues we produce via decimation is $5\left(\frac{3^{2m+1}-3}{2} - \frac{3^{2m}-3}{2}\right) + 3\left(\frac{3^{2m}-3}{2}\right)$. Adding the number of eigenvalues we have identified that are born, we compute
\begin{align*}
\frac{5(3^{2m+1}-3)}{2} - \frac{2(3^{2m}-3)}{2}+9^{m}+3\left(\frac{3^{2m}+3}{2}\right)+\frac{3^{2m+2}-3}{2}& \\
=\frac{3^{2m+3}-3}{2}&
\end{align*}
which is $\#(V_{m+1}\setminus V_0)$, confirming that we have accounted for all of the eigenvalues on level $m+1$.

We now, similar to the Interval case, define

\begin{equation}
\lambda = \lim_{m \to \infty} \left(\frac{1}{\mu_{0}r_{0}}\right)^{m}\lambda_{m}
\end{equation}

with $\lambda_{m}$ a sequence defined by repeated application of the $\Phi$ mappings, with all but a finite number $\Phi_{1}$. Expressing $\Phi_{1}$ in Taylor Series form is not as simple as on the Interval; as a solution to a quintic equation we lack a closed algebraic form. However, application of the Lagrange Inversion Theorem allows simple formulation of a Taylor Series for the inverse of the aforementioned quintic near $\lambda_{m}=0$

\begin{equation}
\Phi_{1}(x) = \frac{2r(2+r)}{(1+2r)(15+26r+9r^{2})}x+O(x^{2})
\end{equation}

and so as $\lambda \to 0$, the higher order terms will fall away, causing $\lambda_{m} = O\left(\left(\frac{2r(2+r)}{(1+2r)(15+26r+9r^{2})}\right)^{m}\right)$ as $m \to \infty$. Computing the renormaliztion factor explicitly as a function of $r$ yields

\begin{equation}
\frac{1}{\mu_{0}r_{0}} = \frac{(1+2r)(15+26r+9r^{2})}{2r(2+r)}
\end{equation}

and so the limit defined above clearly exists.

The ordering of eigenvalues that are born and decimated from the $\Phi$ maps on SG is, unlike the Interval, dependent on the parameter $r$. Lacking algebraic closed forms for thte $\Phi_{i}$ is less than ideal, but we can still provide a complete description using the properties of the quintic function, $\lambda_{m+1}(\lambda_{m},r)$, that they are solutions to. For brevity we present only results and not algebraic proofs of each - these are not difficult to show for each case.

For very small $r$, the ordering follows

\begin{align*}
\Phi_1(\lambda_{1, p}^{(m)}) &, ..., \Phi_1(\lambda_{s, p}^{(m)}), b_{1}, \Phi_2(\lambda_{s, p}^{(m)}), ..., \Phi_2(\lambda_{1, p}^{(m)}), b_{4}, \\
&\Phi_3(\lambda_{1, p}^{(m)}), ..., \Phi_3(\lambda_{s, p}^{(m)}), b_{2}, \Phi_4(\lambda_{s, p}^{(m)}), ..., \Phi_4(\lambda_{1, p}^{(m)}),b_{5},\Phi_5(\lambda_{s, p}^{(m)}), ..., \Phi_5(\lambda_{1, p}^{(m)}),b_{3},b_{7},b_{6}
\end{align*}

Then at the solution to $b_{2}(r)=b_{4}(r) \approx 0.28$ a local inversion occurs around $\Phi_{4}$ to give

\begin{align*}
\Phi_1(\lambda_{1, p}^{(m)}) &, ..., \Phi_1(\lambda_{s, p}^{(m)}), b_{1}, \Phi_2(\lambda_{s, p}^{(m)}), ..., \Phi_2(\lambda_{1, p}^{(m)}), b_{4}, \\
&\Phi_3(\lambda_{1, p}^{(m)}), ..., \Phi_3(\lambda_{s, p}^{(m)}), b_{5}, \Phi_4(\lambda_{1, p}^{(m)}), ..., \Phi_4(\lambda_{s, p}^{(m)}),b_{2},\Phi_5(\lambda_{s, p}^{(m)}), ..., \Phi_5(\lambda_{1, p}^{(m)}),b_{3},b_{7},b_{6}
\end{align*}

The next major change occurs at $r=1$ when the direction of $\Phi_3$ inverts, and $b_{7}$ descends past $b_{3}$

\begin{align*}
\Phi_1(\lambda_{1, p}^{(m)}) &, ..., \Phi_1(\lambda_{s, p}^{(m)}), b_{1}, \Phi_2(\lambda_{s, p}^{(m)}), ..., \Phi_2(\lambda_{1, p}^{(m)}), b_{4}, \\
&\Phi_3(\lambda_{s, p}^{(m)}), ..., \Phi_3(\lambda_{1, p}^{(m)}), b_{5}, \Phi_4(\lambda_{1, p}^{(m)}), ..., \Phi_4(\lambda_{s, p}^{(m)}),b_{2},\Phi_5(\lambda_{s, p}^{(m)}), ..., b_7, \Phi_5(\lambda_{1,p}^{(m)}), b_3, b_6
\end{align*}

Then $b_{7}$ continues to descend past $b_{2}$ to yield

\begin{align*}
\Phi_1(\lambda_{1, p}^{(m)}) &, ..., \Phi_1(\lambda_{s, p}^{(m)}), b_{1}, \Phi_2(\lambda_{s, p}^{(m)}), ..., \Phi_2(\lambda_{1, p}^{(m)}), b_{4}, \\
&\Phi_3(\lambda_{s, p}^{(m)}), ..., \Phi_3(\lambda_{1, p}^{(m)}), b_{5}, \Phi_4(\lambda_{1, p}^{(m)}), ..., \Phi_4(\lambda_{s, p}^{(m)}),b_{7},b_{2},\Phi_5(\lambda_{s, p}^{(m)}),..., \Phi_5(\lambda_{1, p}^{(m)}),b_{3},b_{6}
\end{align*}

There are many small gaps of varying sizes interweaved between the above decimation ordering, but their exact size and location is very difficult to describe analytically. We summarize the above results in another conclusive theorem.

\begin{thm}[SG Spectral Decimation]

For any $r$, given $u_{m}$, an eigenfunction with eigenvalue $\lambda_{m}$ on $V_{m}$, we may choose $\lambda_{m+1}$ as a solution to \ref{lambdamap}, given that $\lambda_{m+1} \neq b_{1},b_{2},b_{3},b_{4},b_{5},b_{6},b_{7}$. We can then extend $u_{m}$ to $V_{m+1}$ according to \ref{ext1},\ref{ext2},\ref{ext3} to obtain an eigenfunction on level $m+1$. Then using counting arguments and constructions detailed above, this process, taken together with known eigenfunctions born on each level, produces a complete spectrum on level $m+1$.

\end{thm}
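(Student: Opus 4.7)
The plan is to tie together three threads already developed in this section: the algebraic extension procedure from (\ref{ext1})--(\ref{ext3}), the self-consistency identity yielding the quintic relation (\ref{lambdamap}), and the catalog of born eigenfunctions. First I would verify that whenever $\lambda_{m+1}\notin\{b_1,\ldots,b_5\}$ the denominator $\gamma(r,\lambda_{m+1})$ is nonzero, so the index-permuted versions of the three displayed extension formulas uniquely determine $u_{m+1}$ at the $12$ new vertices in each $m$-cell. By construction these formulas solve the $12$-equation linear system imposing $-\Delta_{m+1}u_{m+1}=\lambda_{m+1}u_{m+1}$ at the new vertices, so the eigenequation automatically holds on $V_{m+1}\setminus V_m$. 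One then checks consistency at each $x\in V_m\setminus V_0$: expanding the pointwise Laplacian for $\Delta_{m+1}$ at $x$ (which involves the extended values on the two adjacent $m$-cells) and equating with $\lambda_{m+1}u_{m+1}(x)$ yields, after using the $m$-level eigenequation, an identity that in each of the three $i$-cases reduces to the same relation (\ref{lambdamap}); this collapse is what the renormalization condition $r_0\mu_0=r_1\mu_1$ was designed to guarantee. The extra forbidden value $b_6=6$ must be added because at $\lambda_{m+1}=6$ equation (\ref{m+1cell}) becomes tautological and ceases to constrain $\lambda_m$.

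Next I would catalog the eigenfunctions born on level $m+1$. The values $b_1,b_2$ arise only on level $1$, with the explicit $a$-values displayed in Figure~\ref{fig:eigfuncdec1}, and decimation cannot produce them at deeper levels. For $b_3,b_4,b_5$ I would exhibit the full $\tfrac{3^{2m}+3}{2}$-dimensional eigenspace by combining the $\sum_{i=0}^{2m-1}3^i=\tfrac{3^{2m}-1}{2}$ empty-cell configurations of Figure~\ref{fig:eigfuncdec1} with the two boundary-glued rotations of Figure~\ref{fig:eigfuncdec2}; the local skew-symmetric level-$1$ piece $A$ exists for every $r>0$ by symbolic diagonalization of the level-$1$ Dirichlet matrix, and membership in the $b_j$-eigenspace follows from a pointwise Laplacian check using the scaling law. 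The value $b_6=9$ has multiplicity $\tfrac{3^{2m+2}-3}{2}$ exactly as in the standard SG of \cite{Strichartz} (independent of $r$), and $b_7=(9+6r)/(1+r)$ has multiplicity $9^m$ via miniaturization of the configuration in Figure~\ref{fig:eigfuncdec3}. In each case one checks that $b_j$ is not additionally created by decimation: substituting into (\ref{lambdamap}) gives $\lambda_m(b_j)\le 0$ for $b_2,b_3,b_4,b_5,b_6$, inadmissible under the Dirichlet condition, while $b_7$ is excluded because the required level-$m$ preimage would have to vanish identically.

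The proof closes with the counting argument at the end of the section: the $\tfrac{3^{2m+1}-3}{2}$ eigenvalues on level $m$ each have five preimages under (\ref{lambdamap}), except for the $\tfrac{3^{2m}-3}{2}$ copies of $\lambda_m=6$, which have only three admissible preimages (two branches are forced into the forbidden $b_1,b_2$). Summing admissible decimated eigenvalues with the born multiplicities $9^m+3\cdot\tfrac{3^{2m}+3}{2}+\tfrac{3^{2m+2}-3}{2}$ returns $\tfrac{3^{2m+3}-3}{2}=\#(V_{m+1}\setminus V_0)$, matching the dimension of the Dirichlet eigenspace of $\Delta_{m+1}$; since eigenfunctions attached to distinct eigenvalues are orthogonal and the born eigenfunctions for a common $b_j$ were constructed to be linearly independent, the catalog is complete. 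The hardest step is the multiplicity bookkeeping for $b_3,b_4,b_5$: rigorously showing that the empty-cell and boundary-glued configurations span the full $b_j$-eigenspace demands a careful combinatorial count of empty cells produced by the twice-iterated subdivision together with a rank argument ruling out hidden linear dependence, since even a single missed basis element would break the equality with $\#(V_{m+1}\setminus V_0)$ and force us to hunt for additional born eigenfunctions.
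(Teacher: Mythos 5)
Your proposal follows essentially the same route as the paper: the extension formulas valid off the roots $b_1,\dots,b_5$ of $\gamma$, the consistency check at the old vertices producing the quintic relation (\ref{lambdamap}) together with the extra forbidden value at $\lambda_{m+1}=6$, the same catalog of born eigenvalues $b_1,\dots,b_7$ with identical multiplicity lower bounds, and the closing dimension count against $\#(V_{m+1}\setminus V_0)$. The one step you single out as delicate --- establishing that the empty-cell and boundary-glued configurations for $b_3,b_4,b_5$ are genuinely linearly independent so the lower bounds are attained --- is precisely the point the paper also leaves implicit, resolved only because the total tally already saturates the dimension of the level-$(m+1)$ Dirichlet eigenspace.
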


\section{Data on the Sierpinski Gasket}
In this section, we will present the experimental data produced for SG for $r=0.5,3$.

\subsection{Eigenvalues and Eigenfunctions}
Table \ref{sgeigvals} shows a portion of the spectra for $r=0.5, 1$, and $r=3$ on SG for the first three levels. There doesn't seem to be an obvious pattern to the spectra for certain parameter values of $r$ than are observed in the Interval case. Since $\#(V_m \setminus V_0) = \frac{3^{2m+1}-3}{2}$, we have $\frac{3^{2m+1}}{2}$ eigenvalues at level $m$.
\newpage
\begin{center}
\tiny
    \begin{tabular}{l|l|l|l}%
    $n$ & $m = 1$ & $m = 2$ & $m = 3$ 
    \csvreader[head to column names]{r05eigvals.csv}{}
    {\\\hline\csvcoli&\csvcolii&\csvcoliii&\csvcoliv}
    \end{tabular}
    \quad
    \begin{tabular}{l|l|l|l}%
    $n$ & $m = 1$ & $m = 2$ & $m = 3$ 
    \csvreader[head to column names]{r3eigvals.csv}{}
    {\\\hline\csvcoli&\csvcolii&\csvcoliii&\csvcoliv}
    \end{tabular}
    \captionof{table}{Eigenvalues on SG (truncated for length). Left: $r=0.5$ Right: $r=3$}
    \label{sgeigvals}
\end{center}

Figure \ref{fig:sgeigfns} show the first 4 eigenfunctions for each of the $r$ values. 

\begin{figure}
\centering
\includegraphics[width=0.65\textwidth]{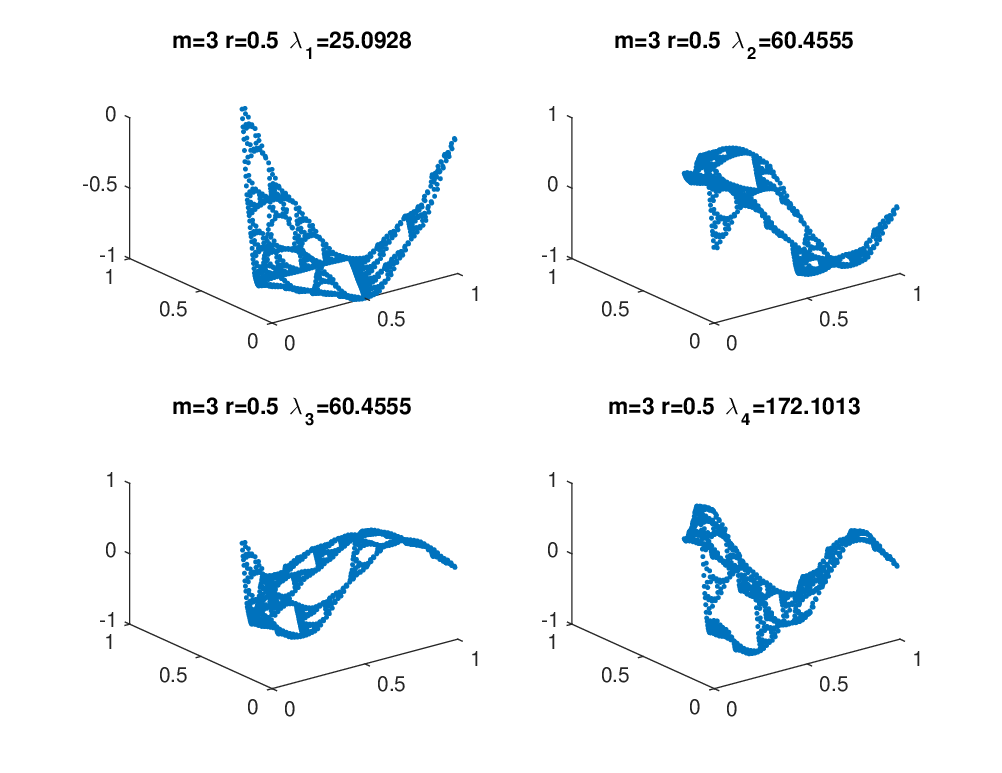}
\includegraphics[width=0.65\textwidth]{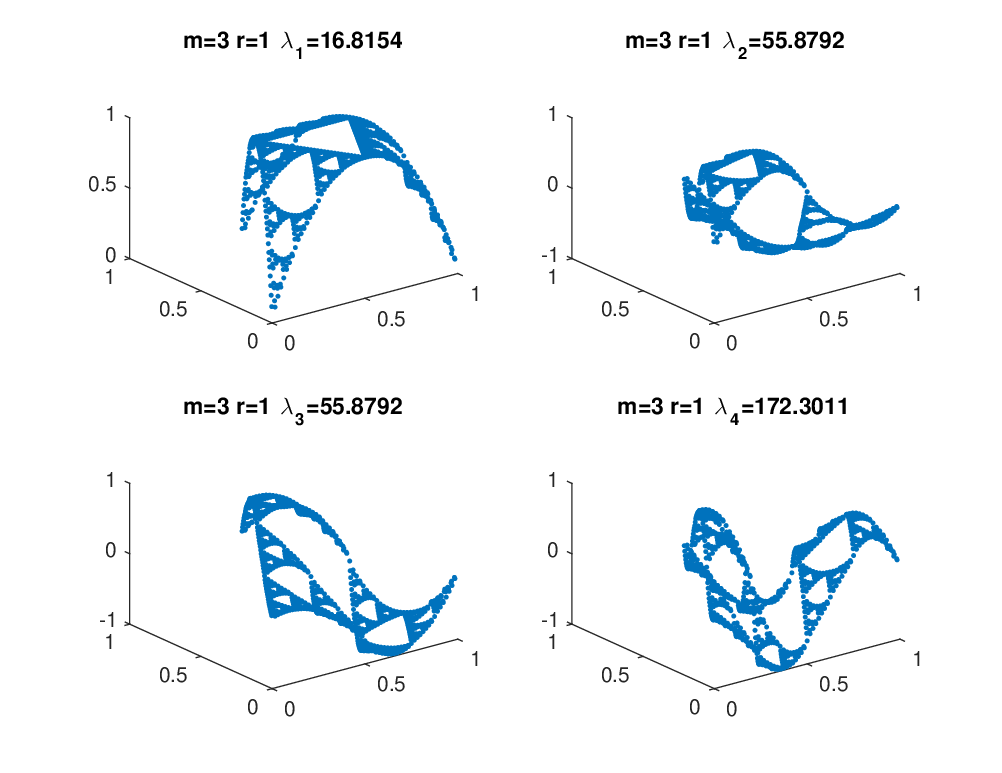}
\includegraphics[width=0.65\textwidth]{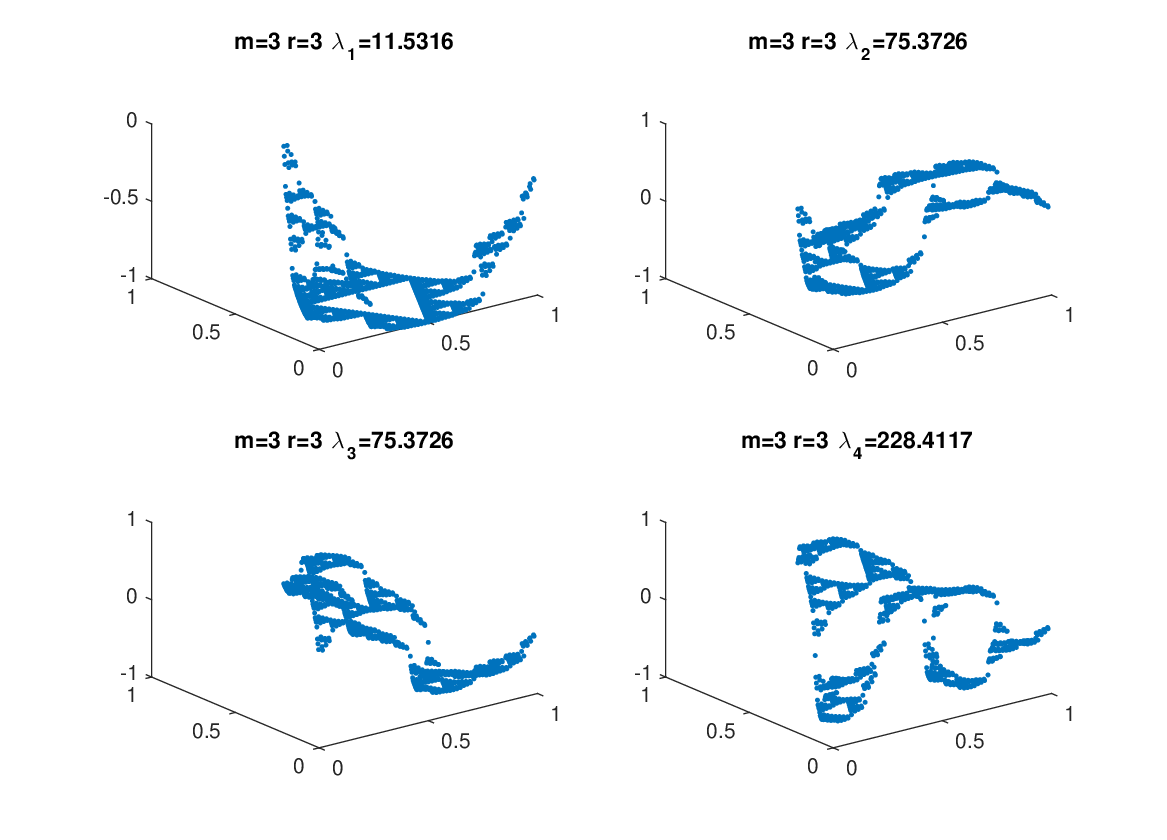}
\caption{First 4 eigenfunctions on $m=3$. Top: $r=0.5$, Middle: $r=1$ (standard), Bottom: $r=3$}
\label{fig:sgeigfns}
\end{figure}

\subsection{Eigenvalue Counting Functions and Weyl Plots}

\begin{figure}
\centering
\includegraphics[width=0.3\textwidth]{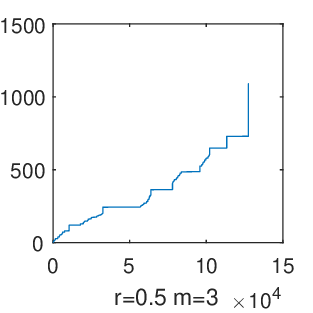}
\includegraphics[width=0.3\textwidth]{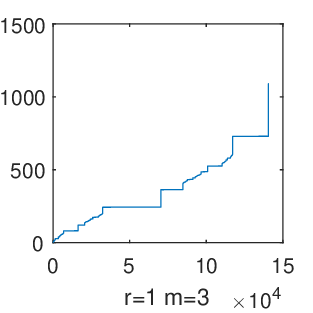}
\includegraphics[width=0.3\textwidth]{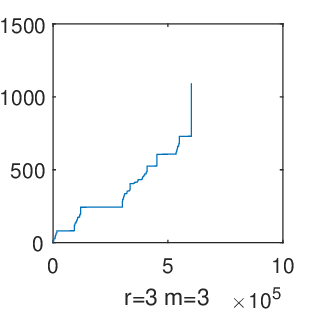}
\caption{Graphs of counting functions on $m=3$. Left: $r=0.5$, Center: $r=1$ (standard), Right: $r=3$}
\label{fig:sgcounting}
\end{figure}

We give graphs of counting functions for the SG for $r=0.5, 1,$ and $3$ at the first three levels in Figure \ref{fig:sgcounting}. The counting function is defined 
\begin{equation}
N(x) = \#\{\lambda | \lambda\leq x, \text{ for all eigenvalues }\lambda\}
\end{equation}
as before, and is known to follow a power law according to Weyl asymptotics. 

\begin{figure}
\centering
\includegraphics[width=0.3\textwidth]{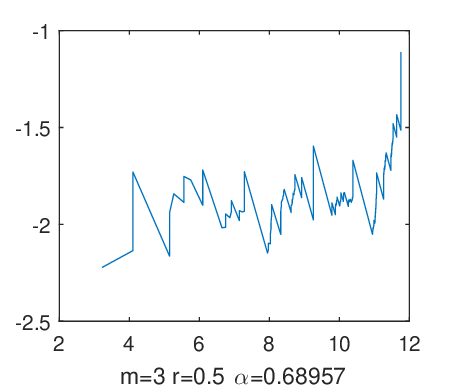}
\includegraphics[width=0.3\textwidth]{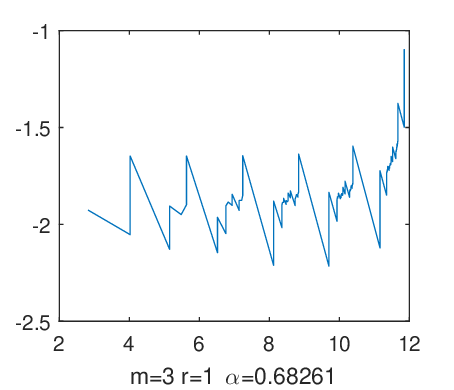}
\includegraphics[width=0.3\textwidth]{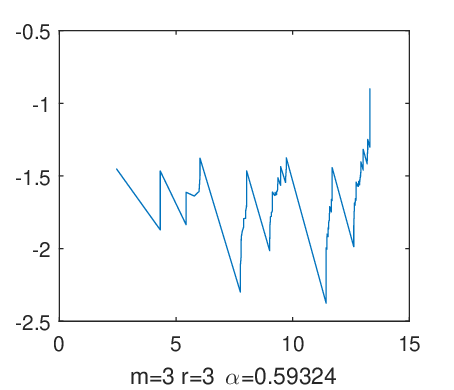}
\caption{Weyl plots on $m=3$. Left: $r=0.5$, Center: $r=1$ (standard), Right: $r=3$}
\label{fig:sgweyl}
\end{figure}

We can also generate Weyl plots similarly as on the Interval. $\alpha$, the exponent in the power law relationship, can be computed in the same way as before. We can just replace the renormalization factor and the number of cells in the formula for $\alpha$ for the Interval to obtain the correct constant for the gasket. This gives
\begin{equation}
\alpha = \frac{\log(9)}{\log(L(r)^{-1})}
\end{equation}
where $L(r)$ is the renormalization factor as defined in section 6. Equipped with $\alpha$, we plot 
\begin{equation}
W(\lambda) = \frac{N(\lambda)}{\lambda^\alpha}
\end{equation}
on a log-log scale and confirm that periodicity occurs in our Weyl plots in Figure \ref{fig:sgweyl}. This behavior is established in \cite{Kigami3}.

\subsection{Limiting Laplacians}

We would now like to, similar to the previous analysis on the Interval, examine the limiting behavior of our Laplacian on SG, as either $r \to 0$ or $r \to \infty$. As before, the eigenfunctions can be prohibitively complicated, but the eigenvalues more readily offer themselves up for analysis.

Again, for both limiting cases, the renormalization constant $\frac{1}{\mu_{0}r_{0}}$ is unbounded, and so any eigenvalues that we wish to remain bounded (with respect to $r$) must be very small. Considering the set $\{b_{1},b_{2},b_{3},b_{4},b_{5},b_{6},b_{7}\}$ of eigenvalues born on each ($m=1$ for $b_{1},b_{2}$) level, the only eigenvalue that tends towards $0$ in the limiting cases is $b_{1}=\frac{3(2+3r-\sqrt{8r+9r^{2}})}{2(1+r)}$. In the case of the first eigenvalue,

\begin{equation}\lim_{r \to 0,m \to \infty} \left(\frac{1}{\mu_{0}r_{0}}\right)^{m}\Phi^{m}_{1}\left(\frac{3(2+3r-\sqrt{8r+9r^{2}})}{2(1+r)}\right)= \infty
\end{equation}

\begin{equation}
\lim_{r \to \infty,m \to \infty} \left(\frac{1}{\mu_{0}r_{0}}\right)^{m}\Phi^{m}_{1}\left(\frac{3(2+3r-\sqrt{8r+9r^{2}})}{2(1+r)}\right)= 9
\end{equation}

the behavior is different for the two limiting cases. This is verified experimentally in Table \ref{tab:sglim}.

\begin{center}
\tiny
    \begin{tabular}{c|c|c|c}%
    $ $ & $r=10^{-2}$ & $r=10^{-4}$ & $r=10^{-5}$\\
    \hline
    \hline
    $\lambda_{1}$ & $1.0096*10^{3}$ & $1.10958*10^{5}$ & $1.12002*10^{6}$  \\
    \hline
    $\lambda_{2}$ & $1.15446*10^{3}$ & $1.12529*10^{5}$ & $1.12503*10^{6}$ \\
    \hline
    $\lambda_{3}$ & $1.15446*10^{3}$ & $1.12529*10^{5}$ & $1.12503*10^{6}$  \\
    \hline
    $\lambda_{4}$ & $1.34118*10^{3}$ & $1.14141*10^{5}$ & $1.13008*10^{6}$   \\
    \hline
    \end{tabular}
    \quad
    \begin{tabular}{c|c|c|c}%
    $ $ & $r=10^{2}$ & $r=10^{4}$ & $r=10^{5}$\\
    \hline
    \hline
    $\lambda_{1}$ & $9.0750$ & $9.0008$ & $8.9994$  \\
    \hline
    $\lambda_{2}$ & $1381.52$ & $1.35031*10^{5}$ & $1.35003*10^{6}$ \\
    \hline
    $\lambda_{3}$ & $1381.52$ & $1.35031*10^{5}$ & $1.35003*10^{6}$  \\
    \hline
    $\lambda_{4}$ & $4141.61$ & $4.05091*10^{5}$ & $4.05009*10^{6}$   \\
    \hline
    \end{tabular}
    \captionof{table}{Limiting eigenvalues}
    \label{tab:sglim}
\end{center}

We will use ratios to further characterize these eigenvalues in the upcoming section. Similarly to the Interval case, the eigenfunction extension algorithm is related to the sequence of $\Phi$ maps used on each individual eigenvalue. However, we can examine the ground state eigenfunction to provide an interesting example. We can explicitly take the limits of the eigenvalue extension formulas as $r \to \infty$. Writing the extensions to $w_{0},z_{0},$ and $y_{0,1}$ as functions of values $x_{0},x_{1},x_{2},r,\lambda$,

\begin{align}
    \lim_{r\to \infty} w_{0}(x_{0},x_{1},x_{2},r,\Phi_{1}(b_{1})) &= \frac{x_{0}+x_{1}+x_{2}}{3} \nonumber\\
    \lim_{r\to \infty} z_{0}(x_{0},x_{1},x_{2},r,\Phi_{1}(b_{1})) &= \frac{x_{0}+x_{1}+x_{2}}{3} \nonumber\\
    \lim_{r\to \infty} y_{0,1}(x_{0},x_{1},x_{2},r,\Phi_{1}(b_{1})) &= \frac{x_{0}+x_{1}+x_{2}}{3}
\end{align}

Here we use $\Phi_{1}(b_{1})$ as the eigenvalue because this will lead us to $\lambda = 9$. While we do not have an a closed algebraic form for $\Phi_{1}$, since $b_{1}\to 0$, it suffices to use the Lagrangian Inversion Polynomial discussed eariler as a substitute for an analytical form of $\Phi_{1}$. It is interesting to note that this limiting direction is analogous to the limiting direction producing $\lambda_{1} = 4$ on the Interval. If we consider the outer 3 cells of the twice-iterated gasket as the ``outside", then both $p \to 0$ and $r \to \infty$ assign all measure to the ``inside" core and all resistance to the ``outside" shell of the structure, be it Interval or SG. This is in contrast to the case where all measure is assigned to the ``outside" and resistance to the ``inside" - on the Interval the limiting structure is simply the $\frac{1}{2}$-Cantor set.

Similar to the Interval example, the ground state eigenfunction on level 1 has value of uniformly 1 in the limiting case. Then application of the above extension algorithm yields a Cantor-like function on SG. On the inside 6 level 1 cells the function is uniformly 1, but on the outside 3 cells a step-function is formed, similar to that formed by the classic Cantor function. The most significant difference is the dependence on the values of three points - thus the largest `tier' occurs at $\frac{1}{3}(0+1+1)=\frac{2}{3}$, as opposed to $\frac{1}{2}$ on the Interval. Figure \ref{fig:cantorsg} contains images of this function, an interesting extension of the Devil's Staircase to SG.

\begin{figure}
\centering
  \includegraphics[width=0.4\textwidth]{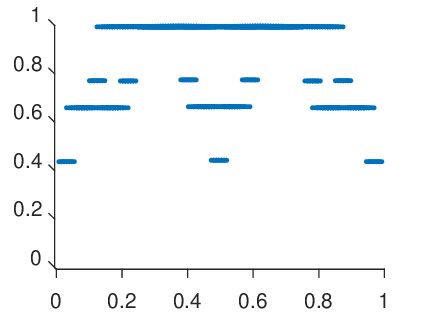}
\includegraphics[width=0.4\textwidth]{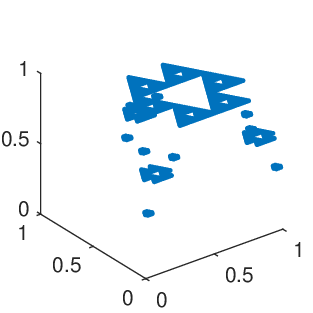}
  \caption{Ground state eigenfunction for $r=10^{4}$, $m=4$, with side-view (left) to show similarity to Cantor function}
  \label{fig:cantorsg}
\end{figure}

\subsection{Ratios of Eigenvalues}

Ratios also provide an effective method of eigenvalue analysis on SG, and the limiting case is more interesting than on the Interval. Some gaps have been shown to exist even in the standard case \cite{Bockelman}. In our case, $\{b_{1},b_{2},b_{3},b_{4},b_{5},b_{6},b_{7}\}$, the eigenvalues being born on each level, have explicit limits for extreme r, given by

\begin{align}
    \lim_{r\to 0} \{b_{1},b_{2},b_{3},b_{4},b_{5},b_{6},b_{7}\} &= \{3,3,\frac{15}{2},3,\frac{9}{2},9,9\} \nonumber\\
    \lim_{r\to \infty} \{b_{1},b_{2},b_{3},b_{4},b_{5},b_{6},b_{7}\} &= \{0,9,9,\frac{3}{2},\frac{9}{2},6,9\}
\end{align}

We can also, as before, examine the limiting behavior of $\{\Phi_{1},\Phi_{2},\Phi_{3},\Phi_{4},\Phi_{5}\}$

\begin{align}
    \lim_{r\to 0} \{\Phi_{1},\Phi_{2},\Phi_{3},\Phi_{4},\Phi_{5}\} &= \{0,3,3,\frac{3}{2},\frac{15}{2}\} \nonumber\\
    \lim_{r\to \infty} \{\Phi_{1},\Phi_{2},\Phi_{3},\Phi_{4},\Phi_{5}\} &= \{0,\frac{3}{2},\frac{9}{2},6,9\}
\end{align}

Of significant interest is that, unlike on the Interval, the limiting behavior of the eigenvalues which are born and the eigenvalue extension maps is different for the two limiting directions of the parameter. We would expect this difference to show up in the observed ratios between eigenvalues. For $r \to 0$, we expect to observe ratios of the set $\{3,3,\frac{3}{2},\frac{15}{2}\}$, or explicitly $\{\frac{1}{3},\frac{2}{5},\frac{1}{2},\frac{3}{5},\frac{2}{3},\frac{5}{6},1,\frac{6}{5},\frac{3}{2},\frac{5}{3},2,\frac{5}{2},3\}$. For $r \to \infty$, we expect to observe ratios of the set $\{\frac{3}{2},\frac{9}{2},6,9\}$, or explicitly $\{\frac{1}{6},\frac{1}{4},\frac{1}{3},\frac{1}{2},\frac{2}{3},\frac{3}{4},1,\frac{4}{3},\frac{3}{2},2,3,5,6\}$. In fact these are precisely the ratios we observe numerically for both limiting cases, as seen in Figure \ref{fig:sgratio}. Of course, the gaps in the ratios of eigenvalues imply gaps of the form $\limsup_{n\to\infty} \frac{\lambda_{n+1}}{\lambda_n} > 1$.

\begin{figure}
\centering
    \includegraphics[width=0.3\textwidth]{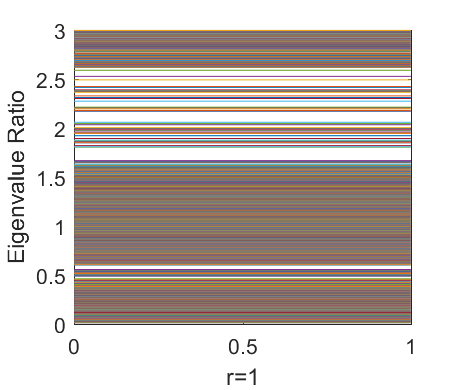}
    \includegraphics[width=0.3\textwidth]{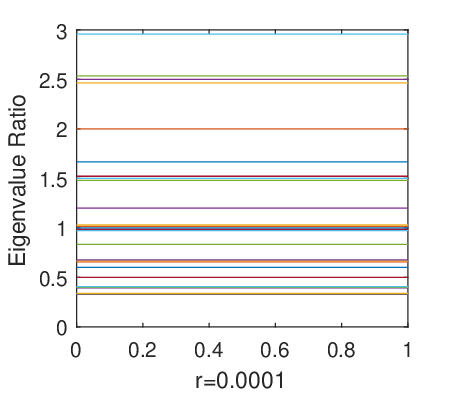}
    \includegraphics[width=0.3\textwidth]{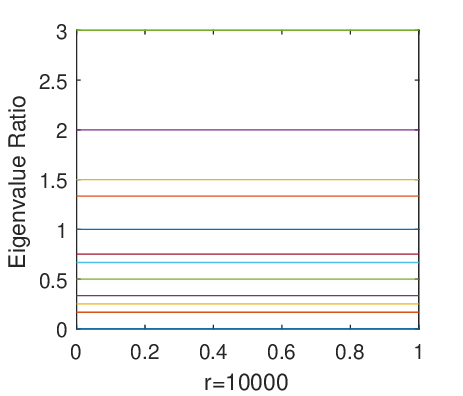}
  \caption{Eigenvalue ratios for $r=1$, $r=10^{-4}$, $r=10^{4}$}
  \label{fig:sgratio}
\end{figure}

\section{Threshold Subdivision}
In the next two sections, we will generalize the Laplacians that we have been studying in this paper to create different families of the Laplacian, though this will eliminate self-similarity at any level $m$. One way is to change the division scheme of the cells when extending to level $m$ from level $m-1$. Until now, the cells on level $m$ could all be written in form $F_w(K)$ where $|w| = m$ and $K = SG$ or $I$. For a threshold subdivision, choose a cutoff value $c$. Then given a partition into cells $C_{m} = \{A_1, ..., A_N\}$ on level $m$, we take
\begin{align}
A_n &\in C_{m+1} &\text{if } \mu(A_n) < c^{m+1} \\
F_i(A_n) &\in C_{m+1} \ \forall \ i & \text{otherwise}
\end{align}
i.e. divide the cell at level $m$ if its measure is greater than $c^{m+1}$. For the right choice of measure and cutoff value, this will give us a more uniform distribution of measure throughout $K$.

\begin{figure}
\centering
\includegraphics[width=\textwidth]{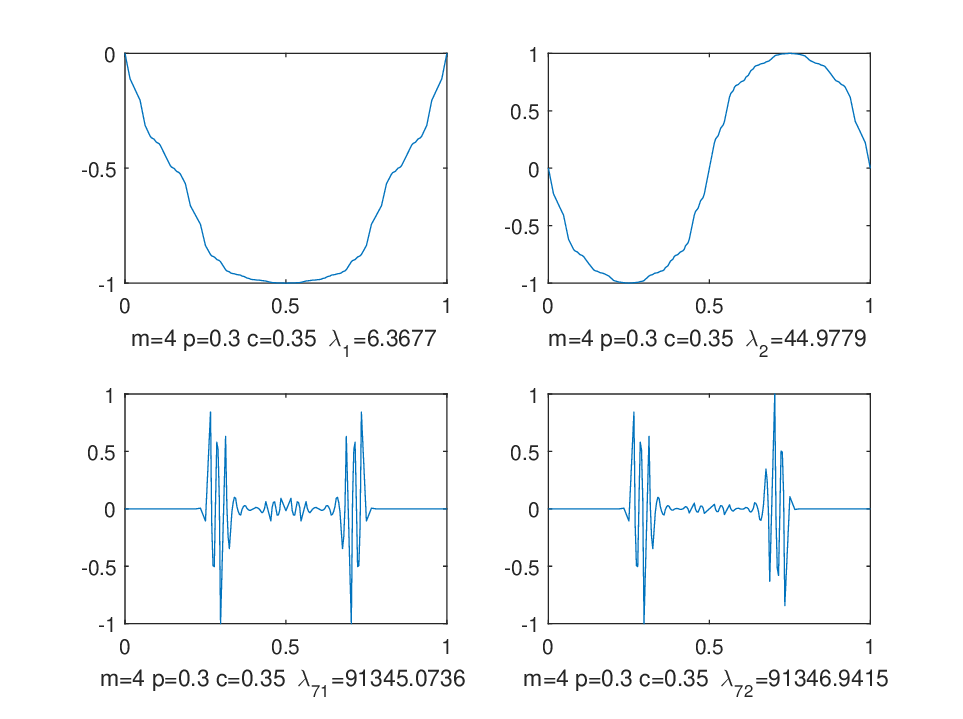}
\includegraphics[width=\textwidth]{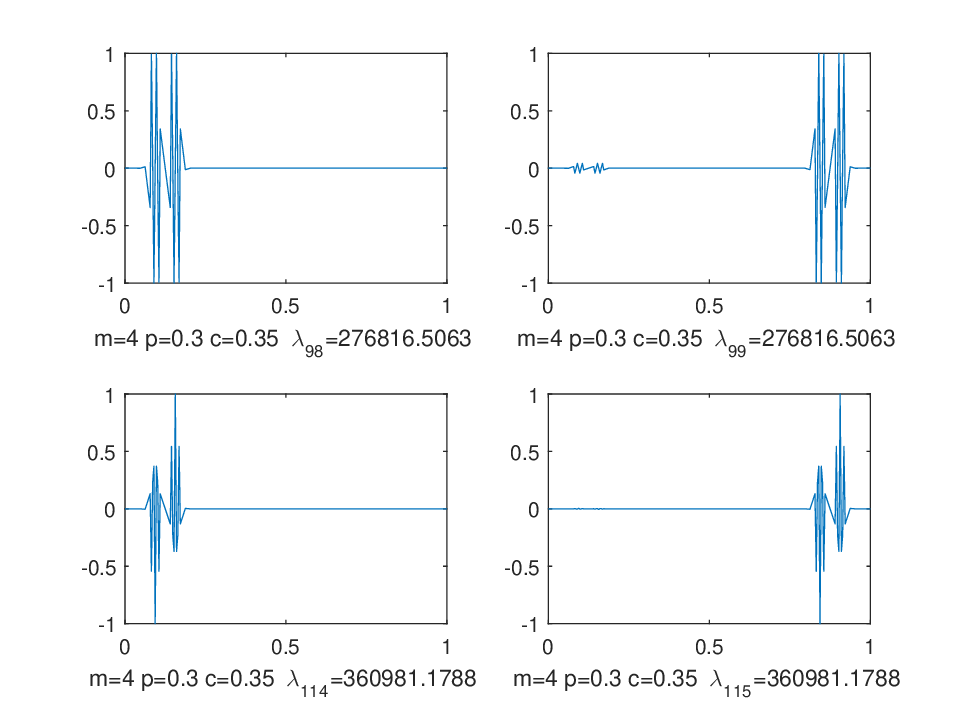}
\caption{Select eigenfunctions for threshold division, $m=4$, $p=0.3$, $c=0.35$}
\label{fig:threshfns}
\end{figure}

\begin{center}
\begin{table}\label{threshspec}
\tiny
\begin{minipage}[t]{0.33\textwidth}
\vspace{0pt}
    \begin{tabular}{l|l|l|l}%
    \bfseries $n$ & \bfseries $m = 1$ & \bfseries $m = 2$ & \bfseries $m = 3$ 
    \csvreader[head to column names]{p3c0.csv}{}
    {\\\hline\csvcoli&\csvcolii&\csvcoliii&\csvcoliv}
    \end{tabular}
\end{minipage}
\begin{minipage}[t]{0.33\textwidth}
\vspace{0pt}
    \begin{tabular}{l|l|l|l}%
    \bfseries $n$ & \bfseries $m = 1$ & \bfseries $m = 2$ & \bfseries $m = 3$ 
    \csvreader[head to column names]{p3c35.csv}{}
    {\\\hline\csvcoli&\csvcolii&\csvcoliii&\csvcoliv}
    \end{tabular}
    \end{minipage}
\begin{minipage}[t]{0.33\textwidth}
\vspace{0pt}
    \begin{tabular}{l|l|l|l}%
    \bfseries $n$ & \bfseries $m = 1$ & \bfseries $m = 2$ & \bfseries $m = 3$ 
    \csvreader[head to column names]{p3c5.csv}{}
    {\\\hline\csvcoli&\csvcolii&\csvcoliii&\csvcoliv}
    \end{tabular}
\end{minipage}
    \centering
    \captionof{table}{Eigenvalues for threshold division, $p=0.3$. Left: $c=0.0$ (standard), Center: $c=0.35$, Right: $c=0.5$}
    \label{threshtbl}
\end{table}
\end{center}

\begin{figure}
\centering
\includegraphics[width=0.32\textwidth]{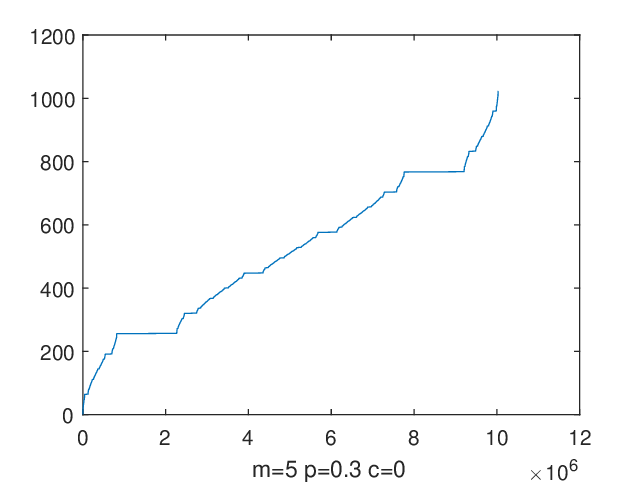}
\includegraphics[width=0.32\textwidth]{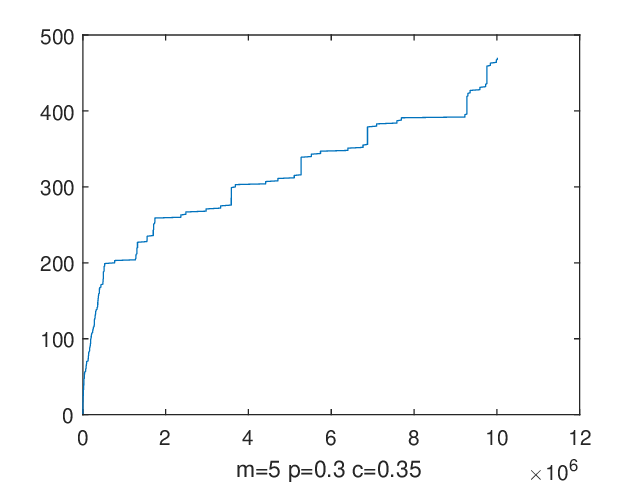}
\includegraphics[width=0.32\textwidth]{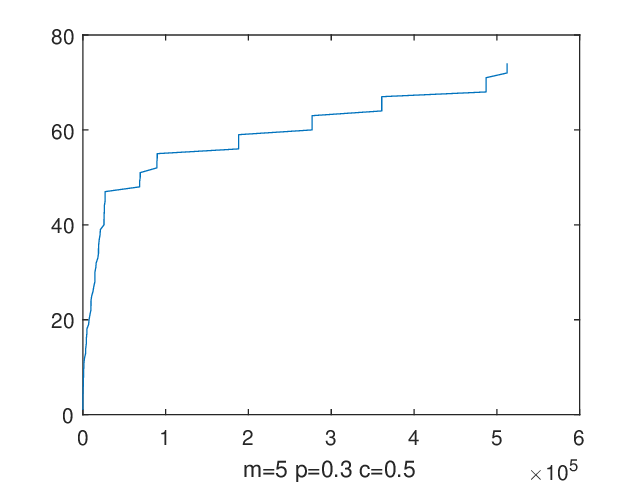}
\caption{Eigenvalue counting functions for threshold division, $m=5$, $p=0.3$. Left: $c=0$ (standard), Center: $c=0.35$, Right: $c=0.5$}
\label{fig:threshcounting}
\end{figure}

We computed the spectra and eigenfunctions on the Interval numerically for various values for $p$ and cutoff values $c$. For eigenvalues high enough on the spectrum, we observed eigenfunctions that are asymmetric about $x=\frac{1}{2}$, which is unprecedented in our study of self-similar, symmetric Laplacians. Some examples are given in Figure \ref{fig:threshfns} for $p=0.3$ and $c=0.35=\frac{1-p}{2}$. As the figure suggests, the lower portion of the spectra seems to give eigenfunctions that are pointwise close to the eigenfunctions obtained for the self-similar Laplacians from previous sections. As we progress to higher portions of the spectra, we begin to see more and more eigenfunctions that are asymmetric about $x=\frac{1}{2}$, as well as eigenvalues that seem to hint at multiplicities $\geq 1$. However, we have yet to see eigenvalues that have multiplicities $>2$. We suspect that this implies that these discrete approximations yield a different Laplacian than those in previous sections, as the discrete eigenfunctions observed seem to approximate a different set of eigenfunctions as in the standard division scheme. We give the full spectra at the first three levels and the graphs of the corresponding eigenvalue counting functions for $p=0.3$ and more values of $c$ in Table \ref{threshtbl} and Figure \ref{fig:threshcounting} respectively. Though the data in this paper only corresponds to the Interval, the same scheme can be used to generate a family of Laplacians on SG as well.

\section{Hierarchical Laplacians}
Another way to construct the Laplacian on $K=I$ or $SG$ is to use a sequence of parameters instead of a single value for $p$ or $r$ such that the measure and resistance of each $m$-cell will be determined according to the $m$th parameter in the sequence. For example, if $A = F_w(I)$ is an $m$-cell on the Interval with the sequence of parameters $\{p_i\}$, we can determine measure and resistance on $F_i(A)$, an $(m+1)$-cell, in the following way. If $i=0, 3$,
\begin{align}
\mu(F_i(A)) &= \left(\frac{p_{m+1}}{2}\right)\mu(A) \\
R(F_i(A)) &= \left(\frac{1-p_{m+1}}{2}\right)R(A)
\end{align}

If $i=1, 2$,
\begin{align}
\mu(F_i(A)) &= \left(\frac{1-p_{m+1}}{2}\right)\mu(A) \\
R(F_i(A)) &= \left(\frac{p_{m+1}}{2}\right)R(A)
\end{align}

The same construction can be used to construct hierarchical Laplacians using a sequence of $r$ values on SG as well. Note that decimation still holds for this hierarchical Laplacian, though a different extension mapping will be used at every level $m$.

\begin{figure}
\centering
\includegraphics[width=0.49\textwidth]{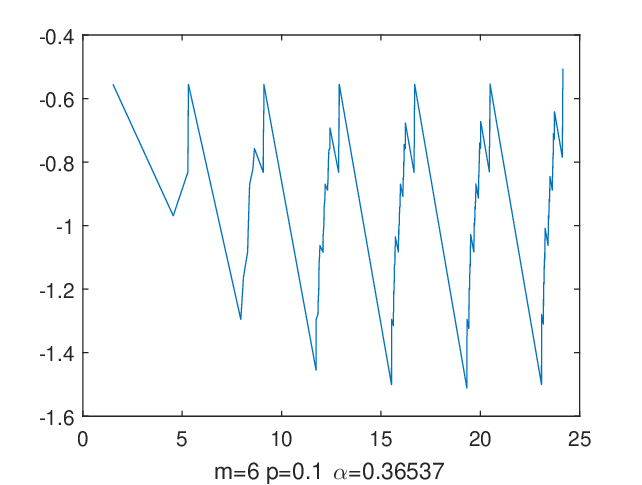}
\includegraphics[width=0.49\textwidth]{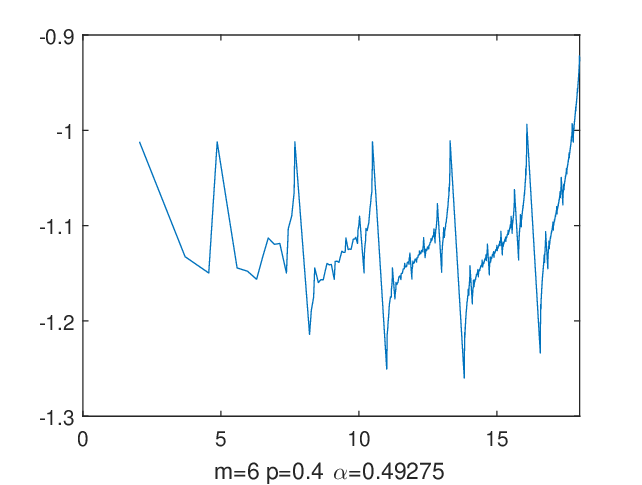}
\caption{Weyl plots for standard division scheme, $p=0.1$ (left) and $p=0.4$ (right)}
\label{fig:stdweyl}
\end{figure}

\begin{figure}
\centering
\includegraphics[width=0.49\textwidth]{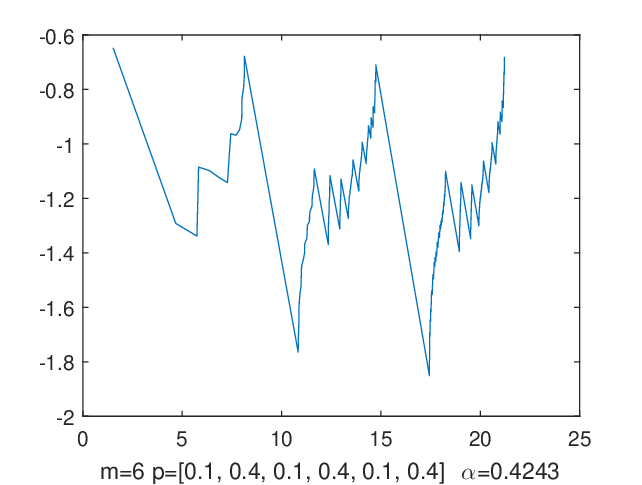}
\includegraphics[width=0.49\textwidth]{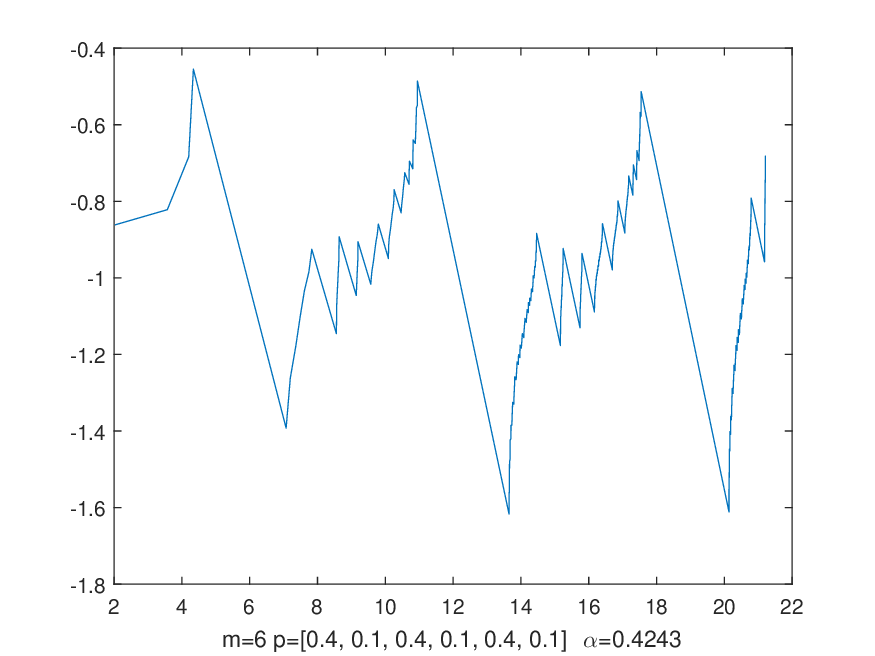}
\caption{Weyl plots for Hierarchical scheme, $p=\{0.1, 0.4, 0.1, ...\}$ (left) and $p=\{0.4, 0.1, 0.4, ...\}$ (right)}
\label{fig:hierchweyl}
\end{figure}

The spectra for these hierarchical Laplacians on the Interval yielded some interesting, though purely experimental, patterns. For example, we examine the spectra for a sequence of parameters $\{p_1, p_2, p_1, p_2, ...\}$ and compare it to the spectra of the Laplacians with a single value for $p$, i.e. $\Delta^{(p_1)}$ and $\Delta^{(p_2)}$. The Weyl plot for the hierarchical spectra with $\{p_1, p_2, p_1, p_2, ...\}$ visually seems to be a sort of mix of the Weyl plots for the spectra of $\Delta^{(p_1)}$ and $\Delta^{(p_2)}$. For example, take $p_1=0.1$ and $p_2 = 0.4$. Figure \ref{fig:stdweyl} shows the Weyl plots for the Laplacian$\Delta^{(p_i)}$, and each of the plots in Figure \ref{fig:hierchweyl} seems to be mix of the preceding Weyl plots in Figure \ref{fig:stdweyl}. We are unable to give an analytic explanation of this phenomena at the moment, but foresee that this way of constructing Laplacians may lead to more concrete results in the future. A related example in \cite{Drenning} yields more decisive graphs.

\section{Solutions of Spacetime Equations}

The methods outlined in this paper provide a framework for the computation of a finite approximation of the spectra for both the Interval and the Sierpinski Gasket. One of the most significant applications of a strong grip on relevant spectra are solutions to spacetime equations through the application of the spectral operator. The general framework is the same as in the case of the Kigami Laplacian (\cite{Kigami1}, \cite{Strichartz}). We sketch the ideas for the convenience of the readers. The main interest of this section are the figures obtained in the general case.

Given an orthonormal basis of eigenfunctions and eigenvalues of the Laplacian, $u_{j}$ and $\lambda_{j}$, ordered such that $\lambda_{j+1} \geq \lambda_{j}$, we define the spectral operator as 

\begin{equation}f(-\Delta)u = \sum_{j=1}^{\infty}f(\lambda_{j})\langle u,u_{j} \rangle u_{j}
\end{equation}

where the inner product $\langle a,b \rangle$ is defined as

\begin{equation}
\langle f,g \rangle=\int a(y)b(y)d\mu(y)
\end{equation}

This spectral operator is used in the classical solution to some spacetime partial differential equations. Unable to numerically compute the infinite series described above, we were reduced to computing numerical approximations using a finite number of eigenfunction and eigenvalue pairs.

Implementation of the spectral operator is not far from previously described computations. Previous sections describe finding eigenfunctions themselves; only orthonormality must be verified. Normalizing each function is a trivial task accomplished by rescaling such that

\begin{equation}
\int u_{j}(y)^{2} d\mu(y)=1
\end{equation}

for all eigenfunctions. On the Interval, orthogonality is provided by the properties of linear algebra - eigenfunctions associated with distinct eigenvalues are guaranteed to be orthogonal. On the Sierpinski Gasket, eigenfunctions associated with eigenvalues of high multiplicity are not neccesarily orthogonal. Clever solutions have been propose to this problem \cite{Allan}, but we chose a simple and direct implementation of the Gram-Schmidt algorithm to provide the necessary orthonormal basis for each eigenvalue on the Sierpinski Gasket.

\subsection{Heat Equation}

Our first application of the spectral operator is to the heat equation, where we seek $u(x,t)$ such that 

\begin{equation}
\frac{\partial u(x,t)}{\partial t} = \Delta_{x}u(x,t)
\end{equation}

and

\begin{equation}
u(x,0) = f(x)
\end{equation}

with Neumann boundary conditions

\begin{equation}
\partial_n u(x,t)|_{\partial \Omega} = 0
\end{equation}

for some region $\Omega$ and initial heat distribtuion described by $f(x)$.
The classical solution to this problem is given by the spectral operator

\begin{equation}u(x,t) = \sum_{\lambda_{j}}e^{-\lambda_{j} t}u_{j}(x)\int u_{j}(y)f(y)d\mu(y)
\end{equation}

where the eigenfunction basis used corresponds to Neumann boundary conditions (the Dirichlet problem can be solved using a Dirichlet basis). The heat equation earns its name describing the flow of heat across space as a function of time, but has deep-rooted connections to Brownian Motion, probability, and random walks.

The simplest choice of $f(x)$ for our analysis is a delta function, with support limited to a single point in the graph approximation. Using Neumann eigenvalues and eigenvectors, we have $\lambda_{1}=0$, with $u_{1}=1$. Then the large $t$ limit is determined wholly by the value of $\int f(y)d\mu(y)$. If $f(x)$ is a delta function with support restricted to $f(x_{0})=d$, this integral further reduces to $\int f(y)d\mu(y)=\frac{d}{m}$ where $m$ is the pointmass assigned to point $x_{0}$. In Figure \ref{fig:heat} several numerical solutions are displayed to delta functions at the center of the interval for different values of $p$.

\begin{figure}
\centering
\includegraphics[width=0.49\textwidth]{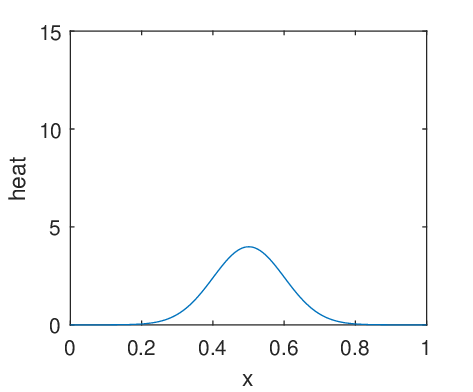}
\includegraphics[width=0.49\textwidth]{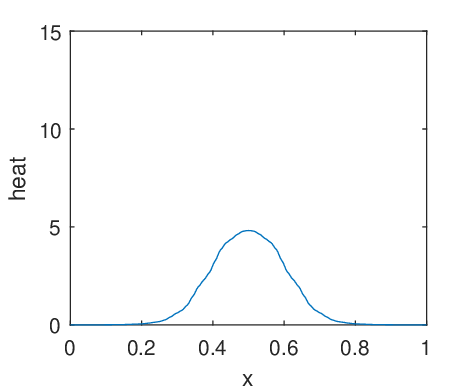}
\newline
\includegraphics[width=0.49\textwidth]{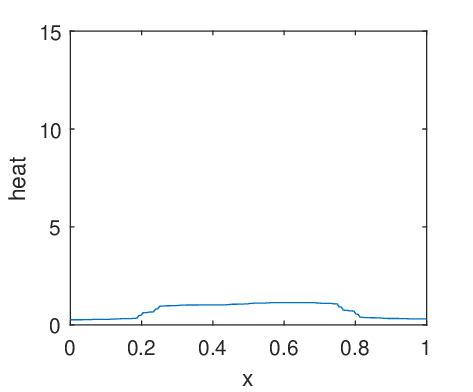}
\includegraphics[width=0.49\textwidth]{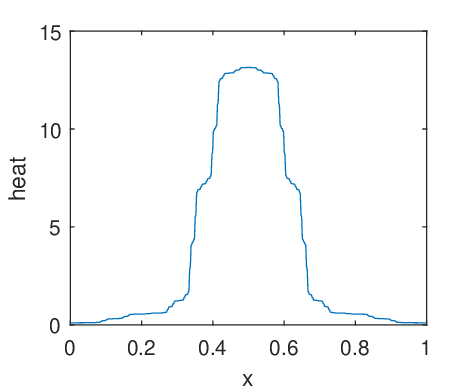}
\caption{Top Row: Heat Solutions for $p=0.5$ (left) and $p=0.6$ Bottom Row: Heat Solutions for $p=0.1$ (left) and $p=0.9$ (right). All at $t=0.005$}

\label{fig:heat}
\end{figure}

The consequence of this is that, considering that the solution will converge to a uniform distribution regardless of the value of $r$ or $p$, $\lim_{t \to \infty} u(x,t) = \frac{d}{m}$ and is therefore very dependent on the choice of parameter.

Furthermore, we can describe the rate of convergence towards these uniform functions. As $t$ moves away from $t=0$, $\lambda_{1}$ begins to dominate, but the last term to fall away will be the second smallest eigenvalue, $\lambda_{2}$. Recalling that eigenvalues tend to infinity at the edge of parameter space, it seems that that heat equation solutions using these laplacians will relax to the ground state the fastest.

\subsection{Wave Equation}

We can also use these methods to solve the wave equation

\begin{equation}
\frac{\partial^{2} u(x,t)}{\partial t^{2}} = \Delta_{x}u(x,t)
\end{equation}

with initial conditions
\begin{align}
u(x,0)=&0\\
\frac{\partial u}{\partial t}u(x,0) =& f(x)
\end{align}

and Dirichlet boundary conditions

\begin{equation}
u(x,t) \big| _{\partial \Omega} = 0
\end{equation}

for some region $\Omega$.

The classical solution is given by

\begin{equation}
u(x,t) = \sum_{\lambda_{j}}\frac{\sin{t\sqrt{\lambda_{j}}}}{\sqrt{\lambda_{j}}}u_{j}(x)\int u_{j}(y)f(y)d\mu(y)
\end{equation}

The wave equation can be solved by numerical methods very similar to those used to solve the heat equation, but the results can be very difficult to analyze quantitatively. Some qualitative results on the Interval are displayed in Figures \ref{fig:wave1}, \ref{fig:wave2}, \ref{fig:wave3}. In the standard case we form the classic traveling wave formation, but varying $p$ slightly to $p=0.52$ produces a small wake behind the leading peak. This wake remains as the leading peak inverts along the right boundary and returns to the origin.

As the value of $p$ becomes more extreme, oscillations become focused onto specific regions of the Interval - in particular those with the smallest measure, as seen in the second row of Figures \ref{fig:wave1}, \ref{fig:wave2}, \ref{fig:wave3}. Similar behavior occurs on SG, but is more difficult to analyze and display because of the increased complexity of the underlying structure. The infinite propagation speed proved in \cite{Lee} and \cite{Ngai} holds here, and is visible in Figures \ref{fig:wave1}, \ref{fig:wave2}, \ref{fig:wave3}.

\begin{figure}
\centering
\includegraphics[width=0.49\textwidth]{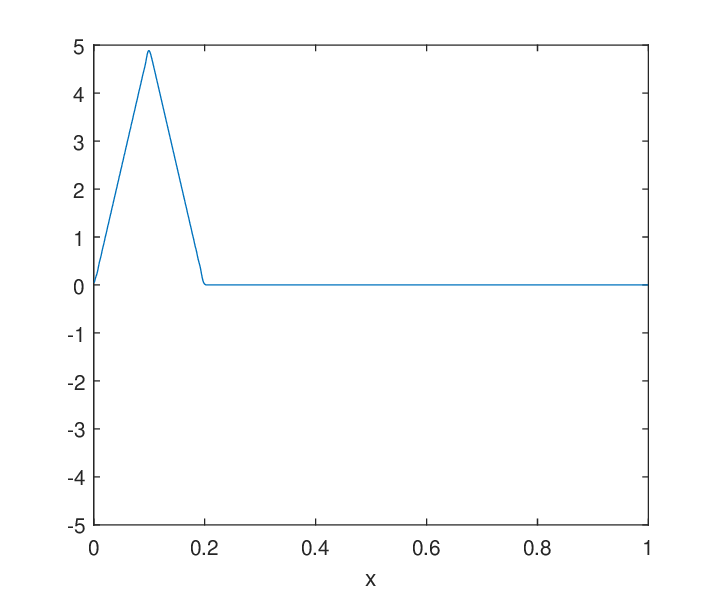}
\includegraphics[width=0.49\textwidth]{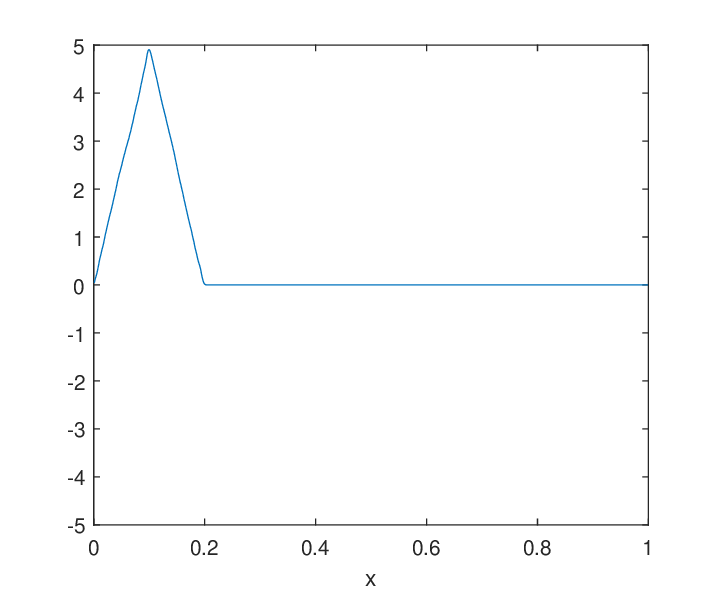}
\newline
\includegraphics[width=0.49\textwidth]{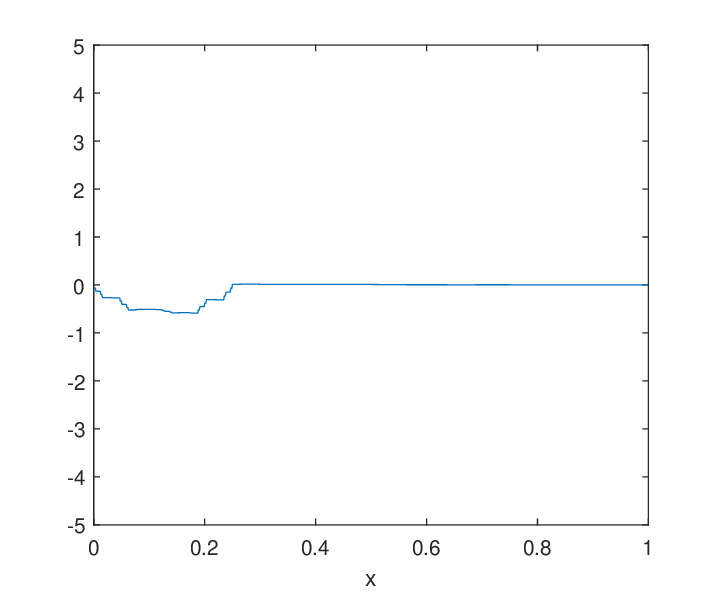}
\includegraphics[width=0.49\textwidth]{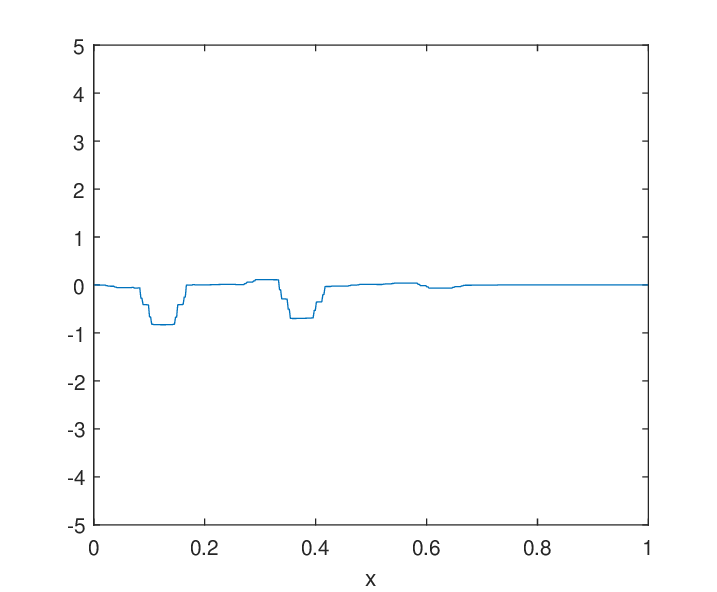}
\caption{Top Row: Wave Solutions for $p=0.5$ (left) and $p=0.52$ Bottom Row: Wave Solutions for $p=0.01$ (left) and $p=0.99$ (right). All at $t=0.1$}

\label{fig:wave1}
\end{figure}

\begin{figure}
\centering
\includegraphics[width=0.49\textwidth]{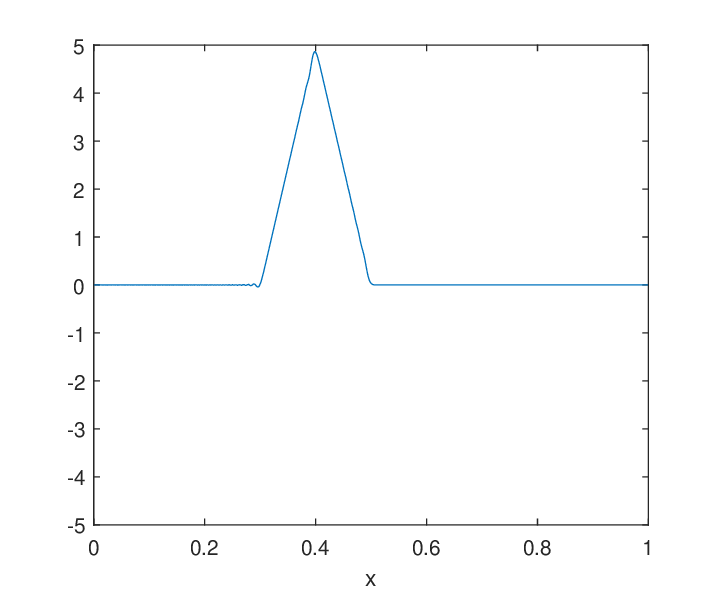}
\includegraphics[width=0.49\textwidth]{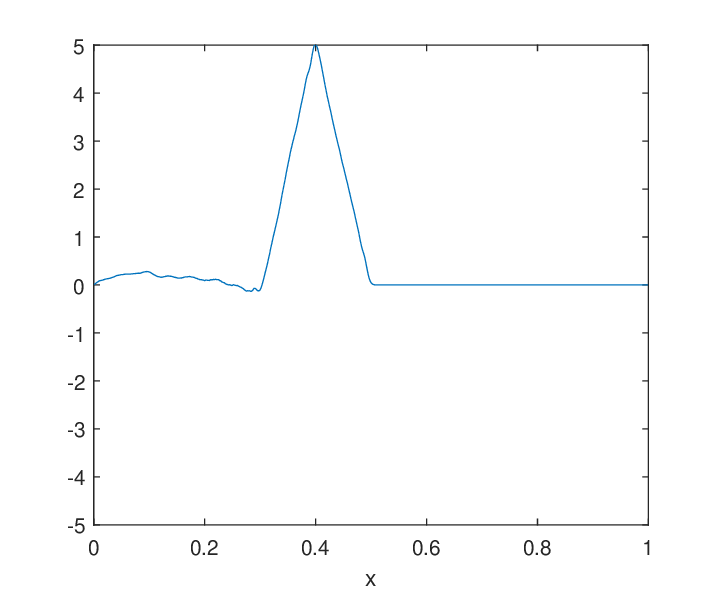}
\newline
\includegraphics[width=0.49\textwidth]{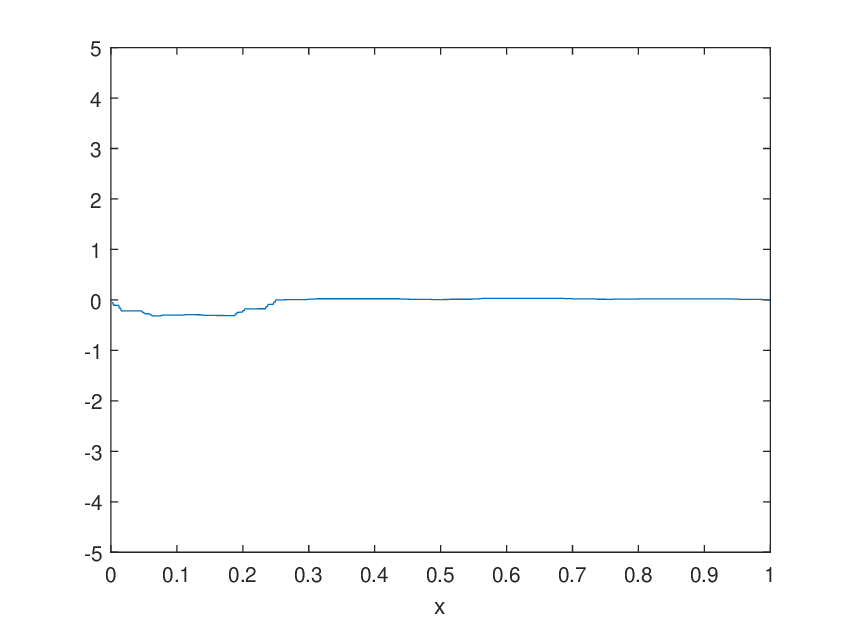}
\includegraphics[width=0.49\textwidth]{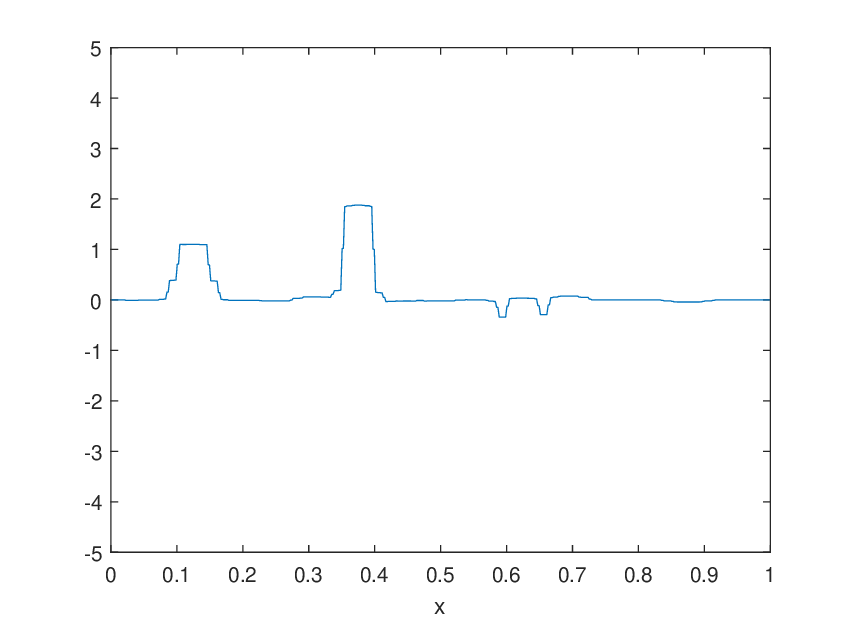}
\caption{Solutions above at $t=0.4$}

\label{fig:wave2}
\end{figure}

\begin{figure}
\centering
\includegraphics[width=0.49\textwidth]{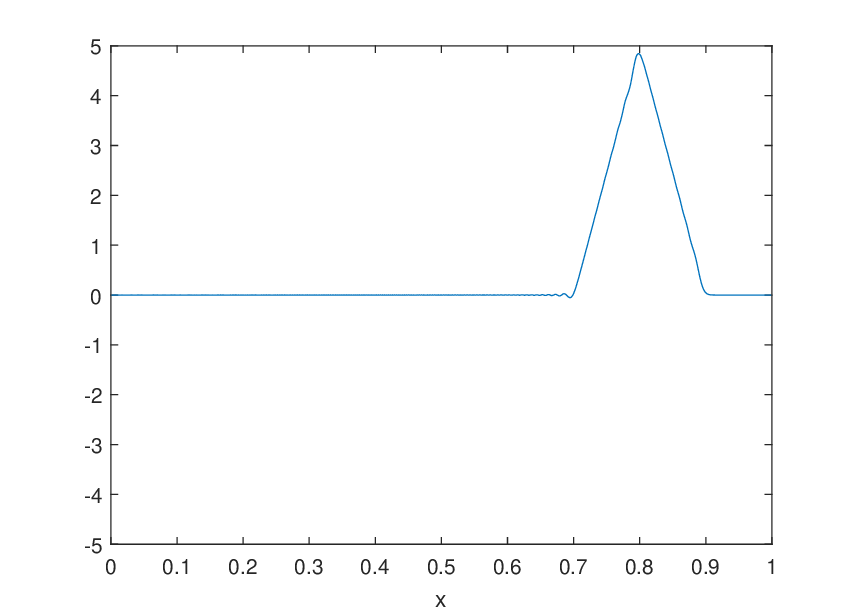}
\includegraphics[width=0.49\textwidth]{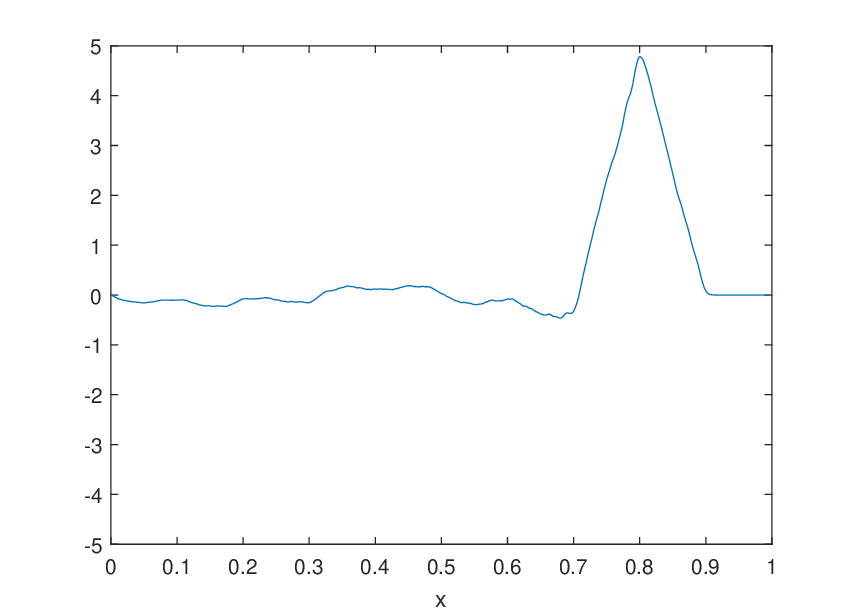}
\newline
\includegraphics[width=0.49\textwidth]{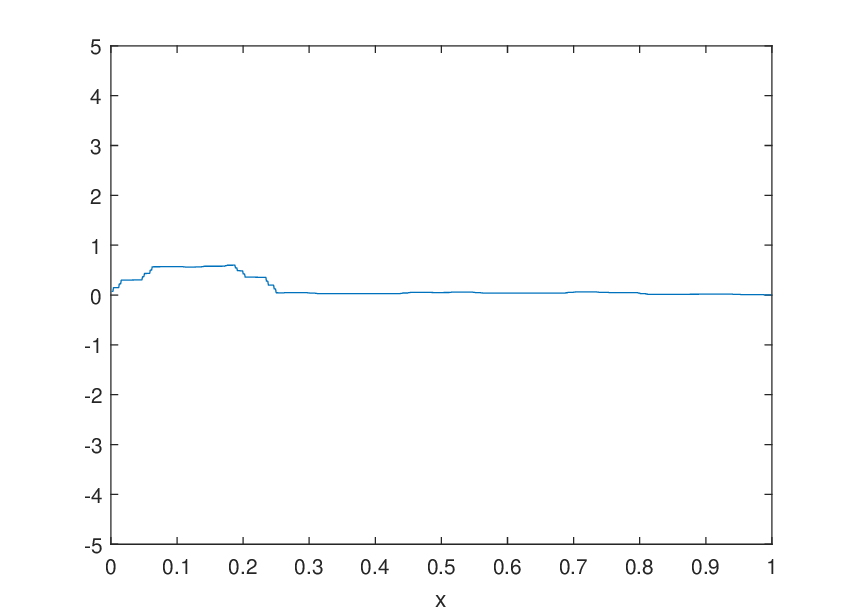}
\includegraphics[width=0.49\textwidth]{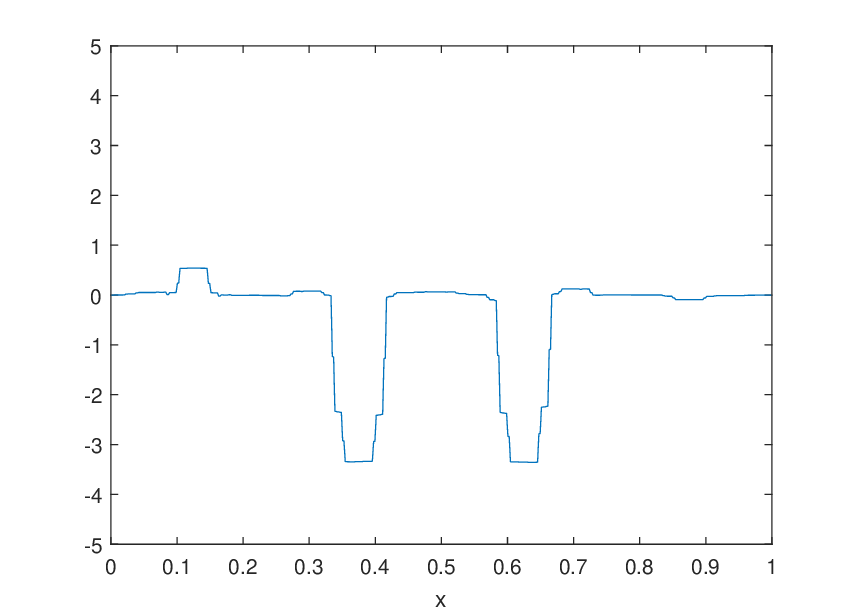}
\caption{Solutions above at $t=0.8$}

\label{fig:wave3}
\end{figure}

\begin{tikzpicture}

\end{tikzpicture}

\begin{tikzpicture}

\end{tikzpicture}

\bigskip
\bigskip
\footnotesize
\noindent\textit{Acknowledgments.}
D. A. King's research was supported by the Nancy Susan Reynolds Scholarship Committee. E. Lee's research was supported by the Summer Program for Undergraduate Research at Cornell University.
\baselineskip=17pt
\printbibliography

\end{document}